\numberwithin{equation}{section}
\newtheoremstyle{my}{1.5em}{0.5em}{\em}{}{\sc}{.}{0.5em}{}
\newtheoremstyle{your}{1.5em}{0.5em}{}{}{\sc}{.}{0.5em}{}
\theoremstyle{my}
\theoremstyle{my}
\newtheorem{thm}{Theorem}[section]
\newtheorem{Theorem}[thm]{Theorem}
\newtheorem*{Theorem*}{Theorem}
\newtheorem{Corollary}[thm]{Corollary}
\newtheorem*{corollary*}{Corollary}
\newtheorem{Lemma}[thm]{Lemma}
\newtheorem{Proposition}[thm]{Proposition}
\newtheorem*{conjecture*}{Conjecture}
\newtheorem*{question*}{Question}
\newtheorem*{definitions*}{Definitions}
\newtheorem*{rem*}{Remark}
\newtheorem*{remark*}{Remark}
\newtheorem*{remarks*}{Remarks}
\newtheorem*{example*}{Example}
\newtheorem*{examples*}{Examples}
\newtheorem*{convention*}{Convention}
\newtheorem*{conventions*}{Conventions}
\newtheorem*{Note*}{Note}
\newtheorem*{exercise*}{Exercise}
\newtheorem*{bibliographical-note*}{Bibliographical note}
\theoremstyle{your}
\newtheorem{Remark}[thm]{Remark}
\newtheorem{Definition}[thm]{Definition}
\newcommand{\Acknowledgements}{{\em Acknowledgements.} }
\newcommand{\R}{\mathbb{R}}
\newcommand{\Z}{\mathbb{Z}}
\newcommand{\C}{\mathbb{C}}
\newcommand{\ev}{\operatorname{ev}}
\newcommand{\pa}{\partial}
\newcommand{\Ordo}{\mathcal{O}}
\newcommand{\bR}{\mathbb{R}}
\newcommand{\bZ}{\mathbb{Z}}
\newcommand{\bQ}{\mathbb{Q}}
\newcommand{\bC}{\mathbb{C}}
\newcommand{\cdbar}{\mathrm{\overline{\partial}}}
\newcommand{\id}{\mathrm{id}}
\newcommand{\ind}{\mathrm{ind}}
\newcommand{\re}{\mathrm{Re}}
\newcommand{\im}{\mathrm{Im}}
\newcommand{\rk}{\mathrm{rank}}
\renewcommand{\ker}{\mathrm{ker}}
\newcommand{\Hom}{\mathrm{Hom}}
\newcommand{\bd}{\mathrm{bd}}
\newcommand{\sblv}{\mathcal{H}}
\newcommand{\cfig}{\mathcal{X}}
\newcommand{\tcfig}{\mathcal{Y}}
\newcommand{\scrB}{\mathcal{B}}
\newcommand{\scrJ}{\mathcal{J}}
\newcommand{\MM}{\mathcal{M}}
\newcommand{\FF}{\mathcal{F}}
\newcommand{\BB}{\mathcal{B}}
\newcommand{\JJ}{\mathcal{J}}
\newcommand{\EE}{\mathcal{E}}
\newcommand{\XX}{\mathcal{X}}
\newcommand{\YY}{\mathcal{Y}}
\newcommand{\NN}{\mathcal{N}}
\newcommand{\DD}{\mathcal{D}}
\newcommand{\LL}{\mathcal{L}}
\newcommand{\diam}{\operatorname{diam}}
\newcommand{\Exp}{\operatorname{Exp}}
\newcommand{\Pre}{\operatorname{PG}}
\title{Exact Lagrangian immersions with one double point revisited}
\author{Tobias Ekholm}
\address{Department of mathematics Uppsala University, Box 480, 751 06 Uppsala, Sweden \newline
Institut Mittag-Leffler, Aurav. 17, 182 60 Djursholm, Sweden}
\email{tobias.ekholm\@@math.uu.se}
\author{Ivan Smith}
\address{Centre for Mathematical Sciences, University of Cambridge, Wilberforce Road, CB3 0WB, UK}
\email{is200\@@cam.ac.uk} 
\thanks{TE was partially supported by the G\"oran Gustafsson Foundation for Research in Natural Sciences and Medicine.  \\ IS was partially supported by European Research Council grant ERC-2007-StG-205349.}
\begin{document}
\thispagestyle{empty}
\setcounter{tocdepth}{1}

\begin{abstract}
We study exact Lagrangian immersions with one double point of a closed orientable manifold $K$ into $\C^{n}$. We prove that if the Maslov grading of the double point does not equal $1$ then $K$ is homotopy equivalent to the sphere, and if, in addition, the Lagrangian Gauss map of the immersion is stably homotopic to that of the Whitney immersion, then $K$ bounds a parallelizable $(n+1)$-manifold. The hypothesis on the Gauss map always holds when $n=2k$ or when $n=8k-1$. The argument studies a filling of $K$ obtained from solutions to perturbed Cauchy-Riemann equations with boundary on the image $f(K)$ of the immersion. This leads to a new and simplified proof of some of the main results of \cite{EkholmSmith}, which treated Lagrangian immersions in the case $n=2k$ by applying similar techniques to a Lagrange surgery of the immersion, as well as to an extension of these results to the odd-dimensional case. 
   \end{abstract}

\maketitle

\section{Introduction}

We consider Euclidean space $\bC^n$ with coordinates $(x_1+iy_1,\dots,x_n+iy_n)$ and with its standard exact symplectic structure $\omega = -d\theta$, where $\theta=\sum_{j=1}^{n}y_j\,dx_j$. We are concerned with exact Lagrangian immersions $f\colon K \to \bC^n$ of closed orientable manifolds $K$, i.e.~immersions for which $f^{\ast}\theta = dz$ for a smooth function $z\colon K\to\bR$.  Gromov \cite{Gromov} proved that $\bC^n$ contains no embedded exact Lagrangian submanifolds, so immersions with a single (transverse) double point have the simplest possible self-intersection.

This paper is a sequel to \cite{EkholmSmith}, which showed that if $n>4$ is even, if $\chi(K)\ne -2$, and if $K$ admits an exact Lagrangian immersion into $\C^{n}$ with only one double point, then $K$ is diffeomorphic to the standard $n$-sphere.  The proof of this theorem relied on constructing a parallelizable manifold $\scrB$ with $\partial \scrB = K$  from moduli spaces of Floer holomorphic disks with boundary in a Lagrange surgery of the image $f(K)$ of the immersion.  The corresponding Lagrange surgery is non-orientable when $n$ is odd, hence cannot bound any parallelizable manifold.  In this paper, we study moduli spaces of Floer holomorphic disks with boundary on the immersed manifold $f(K)$ whose boundary values admit a continuous lift into $K$. This turns out to be analytically somewhat simpler, and leads to a result valid in all dimensions.

To state the theorem, recall that the map $\tilde f=(f,z)\colon K\to\C^{n}\times\R$ is a Legendrian immersion (generically an embedding) into $\C^{n}\times\R$ with the contact form $dz-\theta$. The double points of $f$ correspond to Reeb chords of $\tilde f$, and as such have a mod $2$ Maslov grading $|\cdot|_2$, which lifts to a mod $2j$ grading $|\cdot|_{2j}$ if the Maslov index of $f$ equals $2j$. (Note that the Maslov index is even since $K$ is orientable). Here $|\cdot|_{0}=|\cdot|$ is to be understood as an integer grading. The Lagrangian immersion also determines a Gauss map $ K\to U_n/O_n$, taking  $p\in K$ to the tangent plane $df(T_pK)$ viewed as a Lagrangian subspace of $\C^{n}$. The \emph{stable Gauss map} $Gf$ is the composition of the Gauss map with the natural map $U_n/O_n\to U/O$. Finally, we recall the classical Whitney immersion in Definition \ref{def:Whitney}.

\begin{Theorem}\label{thm:main}
Let $K$ be an orientable $n$-manifold that admits an exact Lagrangian immersion $f$ into $\bC^n$ of Maslov index $2j$, with exactly one double point $a$. Suppose  $|a|_{2j}\ne 1$. Then $j=0$, $|a|=n$, and $K \simeq S^n$ is homotopy equivalent to the sphere. If in addition the stable Gauss map $Gf$ of $K$ is homotopic to the stable Gauss map of the Whitney immersion of $S^n$ then $K$ bounds a parallelizable manifold.
\end{Theorem}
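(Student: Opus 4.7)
The plan is to produce a compact $(n+1)$-dimensional manifold $\mathcal{B}$ whose boundary is $K$ using moduli spaces of punctured pseudoholomorphic disks, and then read the topological conclusions off $\mathcal{B}$. Let $\tilde f\colon K\to\mathbb{C}^n\times\mathbb{R}$ be the Legendrian lift, so that $a$ becomes a Reeb chord $c$. I would study the moduli space $\mathcal{M}$ of maps $u\colon D^2\setminus\{p\}\to\mathbb{C}^n$ solving a perturbed Cauchy--Riemann equation, with boundary on $f(K)$ admitting a continuous lift to $K$, asymptotic at $p$ to $c$, and equipped with one additional free boundary marked point. An index calculation for punctured disks with Lagrangian boundary conditions shows the virtual dimension of $\mathcal{M}$ equals $|a|+1$ (in the integer-graded case $j=0$), and the free marked point gives an evaluation $\ev\colon\mathcal{M}\to K$ obtained by lifting the boundary through $f$.

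The first key step is a grading and orientation analysis. The Maslov index of such a punctured disk is rigidly controlled mod $2j$ by $|a|_{2j}$. Codimension-one degenerations of $\mathcal{M}$ come in two types: the marked point collides with the puncture $p$, producing the boundary stratum we want to identify with $K$; or a sub-disk breaks off along a Reeb chord trajectory, which necessarily has Maslov index $1$ in the rigid case. The hypothesis $|a|_{2j}\neq 1$ is precisely what rules out the second possibility. A parity/index check using that $\dim\mathcal{M}=n+1$ and $|a|\equiv n\pmod{2j}$ then forces $j=0$ and $|a|=n$. Exactness of $\omega$ kills holomorphic spheres and closed disk bubbles, while the single-double-point hypothesis restricts bubbling to Reeb-chord splittings of the kind just excluded.

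Analytically, for a generic choice of Hamiltonian perturbation and compatible almost complex structure, $\mathcal{M}$ is a smooth manifold of dimension $n+1$, and its SFT-style compactification $\overline{\mathcal{M}}$ is a smooth manifold with corners whose only codimension-one stratum is the marked-point collision stratum. An asymptotic analysis at $c$ identifies this stratum diffeomorphically with $K$: as the marked point converges to $p$, the boundary lift is forced through both sheets of $K$ meeting at $a$, and the evaluation sweeps out $K$ exactly once. Setting $\mathcal{B}:=\overline{\mathcal{M}}$ yields the desired $(n+1)$-dimensional filling with $\partial\mathcal{B}\cong K$. To recover the homotopy type of $K$, I would show $\mathcal{B}$ is simply connected (using a deformation to the standard Whitney model) and has trivial reduced homology, so Poincar\'e--Lefschetz duality applied to $(\mathcal{B},K)$ forces $K\simeq S^n$.

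For the final conditional statement, I would compute the stable tangent bundle of $\mathcal{B}$ from the linearized operator: the stable tangent bundle of $\mathcal{B}$ along $\partial\mathcal{B}=K$ is classified by $Gf$, and the moduli interpretation provides a canonical null-homotopy of this classifying map over $\mathcal{B}$. When $Gf$ is stably homotopic to the Whitney stable Gauss map, the null-homotopy extends to give a trivialization of $T\mathcal{B}$, so $\mathcal{B}$ is parallelizable. The main obstacle I anticipate is the gluing/transversality analysis at broken Reeb-chord configurations: showing that $|a|_{2j}\neq 1$ suffices to rule out \emph{every} codimension-one breaking other than the marked-point collision requires a careful Fredholm bookkeeping for disks with constrained boundary lifts, and carrying out this exclusion cleanly is the technical heart of the argument.
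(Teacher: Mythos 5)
Your proposal conflates two logically independent parts of the argument that the paper keeps separate, and this creates a genuine gap. In the paper, the conclusion ``$j=0$, $|a|=n$, $K\simeq S^n$'' is established \emph{before} any filling by moduli spaces is attempted, and by purely algebraic means: Lemma~\ref{lemma:hmtpysphere} observes that a dga on one generator is unlinearizable only when $|a|_{2j}=1$; a linearization then triggers the Hamiltonian-displaceability double point bound $\rk H^*(K;\Z_2)\le 2$, which with spin-ness and finite-field coefficients forces $K$ to be an integral homology sphere with $|a|=n$ and Maslov index $0$; finally, a lifted version of linearized Legendrian contact homology shows every connected cover of $K$ has degree one, so $\pi_1(K)=1$. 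Your alternative plan — deduce $K\simeq S^n$ by showing the compactified moduli space $\mathcal{B}$ is simply connected and has trivial reduced homology, then apply Poincar\'e--Lefschetz — has no mechanism behind it. ``Deformation to the standard Whitney model'' controls index bundles over configuration spaces (this is how the paper trivializes $T\mathcal{B}$), but it does not control the actual homotopy type of the space of \emph{solutions}, which depends on the immersion itself, not merely its stable Gauss map. You would need a Floer-theoretic computation of $H_*(\mathcal{B},\partial\mathcal{B})$, and no such computation is proposed; in fact the needed vanishing is exactly what the lifted contact homology argument is designed to prove, at the chain level, without reference to the filling.

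Separately, your mechanism for exhibiting $K$ as a boundary stratum is incorrect as stated. You propose a moduli space of disks asymptotic at a puncture to the Reeb chord $c$, with one free boundary marked point, and claim that the codimension-one stratum where the marked point collides with the puncture is diffeomorphic to $K$ via evaluation. But as the marked point approaches the puncture, the evaluation map limits to the fixed asymptotic value $a^\pm\in K$ (or to a bubble evaluation), so this stratum cannot sweep out $K$. The paper instead works with \emph{unpunctured} Floer disks $\FF_{0,0}$ governed by the $r$-parametrized perturbed equation \eqref{Eqn:CR-1}; when $r=0$ the perturbation vanishes and, by exactness and Stokes, all solutions are constant maps, so the $r=0$ boundary component of $\FF_0$ is canonically $\Sigma=K$ (Corollary~\ref{Cor:Constants}). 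The other boundary component is the Gromov--Floer stratum $\FF_1\times\MM$, which is an honest product (not a fibered product) precisely because breaking is concentrated at the single double point; it is then capped by $\DD\times\MM$ with $\partial\DD=\FF_1$. Your parallelizability discussion via the linearized index bundle is in the right spirit, but it must be applied to this particular construction, including the compatibility of pregluing with the index bundle (Lemma~\ref{Lem:GlueIndex}) and a coherent-trivialization argument across the cap, and it crucially presupposes $|a|=n$ and vanishing Maslov index, which in the paper are conclusions of Lemma~\ref{lemma:hmtpysphere} rather than consequences of the filling.
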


The stable Gauss map $Gf$ of a Lagrangian immersion $f$ of a homotopy $n$-sphere lifts to a map into $U$, hence vanishes if $n$ is even.  If $n$ is odd it does not necessarily vanish; the relevant homotopy groups are   
\[
\pi_n(U/O)=\begin{cases}
0 &\text{ for }n=8k-1,\\
\Z &\text{ for }n=8k+1\text{ or }n=8k+5,\\
\Z_{2} &\text{ for }n=8k+3.
\end{cases}
\]
In particular, the homotopy condition in Theorem \ref{thm:main} always holds when $n=8k-1$.

The only even-dimensional homotopy sphere which bounds a parallelizable manifold is the standard sphere, by \cite{KM}.   If $n$ is even, the mod 2 Maslov grading of the  double point is determined by whether the Euler characteristic is $+ 2$ or $-2$.  Combining these facts, Theorem \ref{thm:main} in particular recovers the results on even dimensional spheres in \cite{EkholmSmith} as stated above. 

Theorem \ref{thm:main} should be contrasted with the  results of \cite{EEM}, which provide general constructions of exact Lagrangian immersions with few double points, either by appeal to an $h$-principle for loose Legendrian embeddings and their Lagrangian caps \cite{Murphy, EM} or via  higher-dimensional analogues of the explicit constructions of \cite{Sauvaget}.  In particular, special cases of the result of \cite{EEM} show that if $K$ is an orientable $n$-manifold with $TK\otimes\bC$ trivial, then
\begin{itemize}
\item if $n=2k$ and $\chi(K) = -2$, then $K$ admits an exact Lagrangian immersion with exactly one double point;
\item if $n=2k+1$ and $K$ is a homotopy $n$-sphere, then $K$ admits an exact Lagrangian immersion with exactly one double point.
\end{itemize}
Thus, the main result of \cite{EkholmSmith} is essentially sharp, and the Maslov index hypothesis is crucial for the rigidity results established in Theorem \ref{thm:main}.  

In the proof of Theorem \ref{thm:main} the homotopy type of $K$ is controlled by a version of linearized Legendrian contact homology adapted to an abstract covering space of $K$,  analogous to Damian's lifted Floer homology \cite{Damian}. For a more systematic development of that subject, see \cite{Eriksson_Ostman}.  The essential simplification in the subsequent construction of a bounding manifold for $K$, as compared to \cite{EkholmSmith}, comes from the fact that breaking of Floer holomorphic disks is concentrated at the double point, and therefore Gromov-Floer boundaries of relevant moduli spaces are ordinary rather than fibered products.

If $n=4k+1$ there is at most one exotic sphere that bounds a parallelizable manifold.  In dimensions $n=4k-1$ there are typically many such exotic spheres, distinguished by the possible signatures of parallelizable $4k$-manifolds which they bound.  This paper includes a brief discussion of the signatures of the bounding manifolds obtained by Floer theory, see Section \ref{Sec:signature}.
 
Finally, we point out that \cite{EkholmSmith} also gave applications to Arnold's ``nearby Lagrangian submanifold conjecture", proving that $T^*((S^{n-1} \times S^1)\#\Sigma)$ and $T^*(S^{n-1} \times S^1)$ are not symplectomorphic for a non-trivial homotopy $n$-sphere $\Sigma$ when $n=8k$. That result relies essentially on the appearance of the Lagrange surgery of $f(K)$, and does not seem accessible via the simplified argument of this paper.

The paper is organized as follows. In Section \ref{Sec:topconstr} we derive the homotopy theoretic conclusions of Theorem \ref{thm:main}. In Section \ref{Sec:modulispaces} we discuss moduli spaces of Floer disks and their breaking, and in Section \ref{Sec:parallel} we construct a filling manifold and prove it is parallelizable.  The key analytical result, Theorem \ref{Thm:C1boundary}, asserting that a certain compactified moduli space of Floer solutions has a $C^1$-smooth structure, is proved in detail in Appendix \ref{Appendix}. Whilst most of the results in that appendix have appeared elsewhere, we hope that gathering them together and providing a self-contained and relatively concise account of the appropriate gluing theorem will be of wider use and interest. 

To avoid special cases, we assume throughout the paper that the real dimension $n$ of the source of the Lagrangian immersion satisfies $n>2$. 
\vspace{1em}

\Acknowledgements The authors are grateful to the referee, whose detailed reading of the manuscript led to numerous expositional improvements.

\section{Topological constraints on exact immersions}\label{Sec:topconstr}

We begin with some background material on Legendrian contact homology. For details, see \cite{EES1,EES} and references therein.

Let $\Lambda \subset (\bC^n \times \bR, dz-\theta)$ be an orientable Legendrian submanifold.  There is an associated Floer-theoretic invariant $LH(\Lambda;k)$, the \emph{Legendrian contact homology} of $\Lambda$ with coefficients in the field $k$. This is a unital differential graded algebra generated by the Reeb chords of $\Lambda$, and with differential counting certain punctured holomorphic curves in $\bC^n$ with boundary on the immersed Lagrangian projection of $\Lambda$. In the situations studied in this paper, all Reeb chords have non-zero length, and hence the algebra generators can be canonically identified with the double points of the Lagrangian projection.  An \emph{augmentation} of $LH(\Lambda;k)$ is a $k$-linear dga-map $\epsilon\colon LH(\Lambda;k) \rightarrow k$, where the field is concentrated in degree $0$ and equipped with the trivial differential. Such a map leads to a linearization of the the dga as follows. The $k$-vector space underlying the chain complex $C=C^{\mathrm{lin}}(\Lambda;k)$ of the linearization is generated by the Reeb chords of $\Lambda$. To define the differential on $C$, consider the tensor algebra 
\[
\overline{C} \ =\ k \ \oplus \ C \ \oplus \ C\otimes C \ \oplus \ \cdots \ \oplus \ C^{\otimes m}  \ \oplus \ \cdots
\]
and view the dga differential $\pa$ as an element 
\[
\pa\in C^{\ast}\otimes \overline{C},\quad \pa =\sum_{\text{Reeb chords }a} a^{\ast}\otimes (\pa a),
\]
where $C^{\ast}$ is the dual of $C$ and where we view $(\pa a)$ as a linear combination of monomials.  There is a canonical identification between $C$ and its space of first-order variations.  With this in mind, the differential $d$ on $C$ is the element in $C^{\ast}\otimes C$ given by composing the first order variation $\delta\pa$ with the augmentation map on coefficients. More concretely, if
\[
\pa = \sum_{i;j_1,\dots,j_n} \kappa_{i;j_{1},\dots,j_{n}}\, a^{\ast}_i\otimes b_{j_1}\dots b_{j_n},\quad  \kappa_{i;j_{1},\dots,j_{n}}\in k
\]    
then 
\[
\delta\pa = \sum_{i;j_1,\dots,j_n} \kappa_{i;j_{1},\dots,j_{n}}\,a_i^{\ast}\otimes \left((\delta b_{j_1})b_{j_2}\dots b_{j_n}+\dots+b_{j_1}\dots b_{j_{n-1}}(\delta b_{j_n})\right)
\]
and 
\[
d = \sum_{i;j_1,\dots,j_n} \kappa_{i;j_{1},\dots,j_{n}}\, a_i^{\ast}\otimes \left(\epsilon(b_{j_2}\dots b_{j_n})\,b_{j_1}+\dots+\epsilon(b_{j_1}\dots b_{j_{n-1}})\,b_{j_n}\right)\in C^{\ast}\otimes C.
\]

The invariant $LH(\Lambda; k)$ is well-defined as a mod $2$ graded algebra for arbitrary $\Lambda$ if the characteristic of $k$ equals $2$; in this generality, linearizations should preserve the mod $2$ degree.  If $\Lambda$ admits a spin structure, then one can work with fields $k$ of arbitrary characteristic. If $\Lambda$ has vanishing Maslov class, then the mod $2$ grading can be refined to a $\bZ$-grading, and one can ask for graded linearizations. 

Suppose $\pi\colon \tilde{\Lambda} \rightarrow \Lambda$ is an abstract (not necessarily finite degree) connected covering space of $\Lambda$ and that $\epsilon\colon LH(\Lambda;k)\to k$ is an augmentation. We will use a variant chain complex $\tilde C^{\mathrm{lin}}(\tilde{\Lambda}, \pi; k)$ which takes account of the cover, and which is analogous to the ``lifted Floer homology'' of Damian \cite{Damian}. To define it we first pick a base point $x\in \Lambda$ and pick ``capping paths" connecting all Reeb chord endpoints to $x$. 
Then, as in \cite{EES}, consider the Legendrian $\Lambda \cup \Lambda'$ comprising two copies of $\Lambda$, the original one and one copy $\Lambda'$ displaced by a large distance in the Reeb direction. This is a Bott-degenerate situation: through each point of $\Lambda$ there is a Reeb chord ending on $\Lambda'$. We perturb by fixing a sufficiently $C^2$-small Morse function on $\Lambda$ and replace $\Lambda'$ by the graph of the $1$-jet of this function, in a suitable Weinstein tubular neighborhood $\approx J^{1}(\Lambda)$ of $\Lambda'$. We keep the notation $\Lambda'$ for this perturbed component.

The chain complex $\tilde C= \tilde C^{\mathrm{lin}}(\tilde\Lambda,\pi;k)$ is generated by the preimages under $\pi$ of all endpoints of mixed Reeb chords, i.e.~chords that start on $\Lambda$ and end on $\Lambda'$. Let $\tilde c$ denote such a preimage. The differential $\tilde d \tilde c$ is defined as follows. Let $c$ be the mixed Reeb chord corresponding to $\tilde c$ and consider a rigid holomorphic disk with boundary on $\Lambda\cup\Lambda'$, with positive puncture at $c$, and with exactly one mixed negative puncture. Let the augmentation $\epsilon$ act on all pure negative punctures. We say that such a disk is \emph{contributing} if $\epsilon$ takes non-zero values at all pure Reeb chords along its boundary, otherwise we say it is \emph{non-contributing}. For contributing disks we add the capping paths at all pure Reeb chord endpoints in $\Lambda$. In this way, any contributing disk gives a path in $\Lambda$; given the start point $\tilde c\in \tilde\Lambda$ that path determines a unique path in $\tilde\Lambda$, which ends at a point $\tilde b$ which is one of the preimages of the endpoint of the mixed negative puncture in the disk. We define the \emph{coefficient} of any contributing disk to be the product of the values of $\epsilon$ on its pure chords. Finally, the differential $\tilde{d}\tilde{c}$ is defined as the sum of $\tilde{b}$ over all such rigid disks, weighted by the corresponding coefficient. 

\begin{Remark}
We point out that even when the covering degree and chain groups are infinite-rank, the differential of any given generator is well-defined (i.e.~involves only finitely many terms), since the set of chords of action bounded below by some fixed value is always finite, compare to  \cite[Lemma 2.1]{Abouzaid:htpy}. Note that the chain complex $\tilde C$ depends on the choice of capping paths.  
\end{Remark}

For the purposes of this paper, we will need only a number of elementary properties of this theory which we discuss next. We refer to \cite{Eriksson_Ostman} for a more systematic study of the construction. Note first that the mixed Reeb chords of $\Lambda\cup\Lambda'$ are of three kinds: short, Morse, and long. Here short chords correspond to chords that start near the upper endpoint of a Reeb chord of $\Lambda$ and end near the lower end point of a Reeb chord of $\Lambda'$, Morse chords correspond to critical points of the small perturbing function, and long chords start near the lower end point of a Reeb chord in $\Lambda$ and end near the upper endpoint of a Reeb chord of $\Lambda'$. Let $\tilde{S}$, $\tilde C_{\mathrm{Morse}}$, and $\tilde{L}$ denote the subspaces of $\tilde C$ generated by preimages of short, Morse, and long chords, respectively. Since any holomorphic disk has positive area, one finds that   
\[
\tilde{S}\quad\text{ and }\quad \tilde{S}\oplus\tilde{C}_{\textrm{Morse}}
\]
are both sub-complexes of $\tilde C$.

Assume now that $\Lambda$ has vanishing Maslov class. There is then a grading on $\tilde C$ that is well-defined up to an overall integer shift. In order to fix the grading we fix the Maslov potential difference between $\Lambda'$ and $\Lambda$ to be $1$. More prosaically, the grading of a chord is defined as a certain Maslov index of its associated  capping path. For mixed chords, such capping paths also include a path going between the components $\Lambda, \Lambda'$, and we take this path to have Maslov index $1$, see \cite[Section 2.1]{BEE}. With this choice,  elements $\tilde c\in \tilde{C}_{\textrm{Morse}}$ are graded by the Morse index of the corresponding critical point, whilst elements $\tilde{s}\in\tilde{S}$ and $\tilde l\in\tilde L$ are graded by shifts of their gradings downstairs: $|\tilde s|=|s|-(n+1)$ and $|\tilde l|=|l|+1$, where $s$ and $l$ are the Reeb chords of $\Lambda$ corresponding to $\tilde s$ and $\tilde l$, respectively. 

Finally, we will use the fact that the homology of $\tilde C$ is invariant under Hamiltonian isotopies of $\C^{n}$ and their induced actions on $\Lambda$; this is straightforward to derive from corresponding results for the two-copy Legendrian contact homology established in \cite[Section 2]{EESa}.

We now return to our particular situation. Let $f\colon K \to \bC^n$ be an exact Lagrangian immersion of an orientable manifold with one double point, and let $\tilde f\colon K\to\C^{n}\times\R$ be the associated Legendrian embedding. Let $a$ denote the unique double point, which is the canonical  generator of $LH(\tilde f(K);k)$.  

\begin{Lemma}\label{lemma:hmtpysphere}
One of the following holds:
\begin{enumerate}
\item  $LH(\tilde f(K);k)$ admits no mod $2$ linearization. In this case, $|a|_2 = 1$, and if $LH(\tilde f(K);k)$ admits an integer grading then $|a| = 1$. 
\item $LH(\tilde f(K); \bZ_2)$ admits a (mod $2$ graded) linearization.  In this case, the Maslov index of $f$ vanishes, $LH(\tilde f(K);k)$ admits an  integer grading, and $|a| = n$. Furthermore, $K$ is a homotopy sphere.
\end{enumerate}
\end{Lemma}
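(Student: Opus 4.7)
The plan is to exploit that $LH(\tilde f(K);k)$ is freely generated by the single chord $a$, combined with a homological computation using the lifted linearized complex on the universal cover.

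I would begin by unwinding the DGA. With $a$ the only generator, $\partial a = \sum_{m\geq 0} n_m a^m$ with $n_m \in k$, and each nonzero $n_m$ satisfies $m|a|_{\mathrm{gr}} \equiv |a|_{\mathrm{gr}} - 1$ in the relevant grading. A $k$-linear augmentation $\epsilon$ has $\epsilon(a) = 0$ whenever $|a|_{\mathrm{gr}} \neq 0$, and the closure condition $\epsilon \circ \partial = 0$ then collapses to the scalar equation $n_0 = 0$. If $|a|_2 = 0$ then parity already forces $\partial a = 0$, so the trivial augmentation works. If $|a|_2 = 1$ then a mod $2$ augmentation exists if and only if $n_0 = 0 \in \Z_2$. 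This proves the dichotomy; and in case~(1), if an integer grading is available the constant term $n_0 \cdot 1$ of $\partial a$ lies in degree $0$, which via $|\partial a| = |a| - 1$ forces $|a| = 1$.

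For case~(2), I would pass to the lifted linearized complex $\tilde C = \tilde C^{\mathrm{lin}}(\tilde K, \pi; \Z_2)$ for the universal cover $\pi\colon \tilde K \to K$, built from the two-copy $\Lambda \cup \Lambda'$ with $\Lambda'$ the $1$-jet graph of a $C^2$-small Morse function on $\Lambda \cong K$. The action filtration $\tilde S \subset \tilde S \oplus \tilde C_{\mathrm{Morse}} \subset \tilde C$ recalled above has subquotients whose structure is largely fixed: as $\Z_2[\pi_1(K)]$-modules, $\tilde S$ and $\tilde L$ are each free of rank one, concentrated in single degrees $|a|-(n+1)$ and $|a|+1$ respectively, while $\tilde C_{\mathrm{Morse}}$ is, for a sufficiently small perturbation, the equivariant Morse cochain complex of $\tilde K$, with $H(\tilde C_{\mathrm{Morse}}) \cong H^*(\tilde K; \Z_2)$.

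Using Hamiltonian-isotopy invariance of $H(\tilde C)$ together with a diagonal-augmentation computation showing $H(\tilde C) = 0$, and feeding this into the long exact sequences of the two short exact sequences $0 \to \tilde S \to \tilde S \oplus \tilde C_{\mathrm{Morse}} \to \tilde C_{\mathrm{Morse}} \to 0$ and $0 \to \tilde S \oplus \tilde C_{\mathrm{Morse}} \to \tilde C \to \tilde L \to 0$, one forces the $\Z_2[\pi_1(K)]$-module $H^*(\tilde K; \Z_2)$ to have rank one, concentrated in the degrees $0$ and $n$, with the short generator cancelling $H^0$ and the long generator cancelling $H^n$. In particular $\pi_1(K)$ must be trivial so $\tilde K = K$; matching degrees of the short and long generators against $H^0$ and $H^n$ yields $|a|-(n+1) = -1$ and $|a|+1 = n+1$, both giving $|a| = n$; Poincar\'e duality and simple-connectedness upgrade $H^*(K;\Z_2) \cong H^*(S^n;\Z_2)$ to $K \simeq S^n$; and since then $H^1(K;\Z) = 0$, the Maslov class of $f$ vanishes and the grading is integral. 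The main obstacle is the homological identification $H(\tilde C) = 0$, which requires careful control of how Floer disks contributing to $\tilde d$ lift to paths in $\tilde K$ and interact with the $\pi_1(K)$-action, in the spirit of the lifted Floer framework of \cite{Damian} adapted to the Legendrian setting in \cite{Eriksson_Ostman}.
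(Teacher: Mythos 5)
Your treatment of case (1) matches the paper, and the general shape of your lifted-complex argument for $\pi_1(K)=1$ is also what the paper uses; but the route you propose for case (2) has two genuine gaps that the paper's argument is specifically designed to avoid.

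First, there is a circularity. Your degree-matching step (``matching degrees of the short and long generators against $H^0$ and $H^n$ yields $|a|-(n+1)=-1$ and $|a|+1=n+1$'') presupposes that $|a|$ is a well-defined integer, i.e., that the Maslov class vanishes, yet you only establish Maslov vanishing \emph{afterwards} (``and since then $H^1(K;\Z)=0$, the Maslov class of $f$ vanishes''). Without the integer grading, $\tilde S$ and $\tilde L$ sit only in $\Z_2$-degrees, and the degree-matching argument collapses. The paper breaks the circle differently: it first invokes the displaceability double-point bound of \cite{EES,EESa} to get $\rk_{\Z_2} H^*(K;\Z_2)\leq 2$, so $K$ is a $\Z_2$-homology sphere and hence spin; this lets one run the theory over characteristic-zero fields, where the same bound makes $K$ a rational homology sphere, so $H^1(K;\Z)=0$, the Maslov class vanishes, and then $|a|=n$ follows from \cite[Theorem 5.5]{EESa} rather than from the lifted complex.

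Second, $\Z_2$ coefficients alone cannot give $K\simeq S^n$. Your final step, ``Poincar\'e duality and simple-connectedness upgrade $H^*(K;\Z_2)\cong H^*(S^n;\Z_2)$ to $K\simeq S^n$,'' is false as stated: a simply-connected closed $n$-manifold can have the $\Z_2$-cohomology of $S^n$ while carrying odd torsion (e.g.\ $H_2=\Z_3$ when $n=5$). One needs $K$ to be a $\Z$-homology sphere, which requires controlling homology over all fields; the paper handles this by passing to finite-field coefficients once $K$ is known to be spin. There is also a smaller issue in your identification $H(\tilde C_{\mathrm{Morse}})\cong H^*(\tilde K;\Z_2)$: when the cover $\tilde K$ is infinite, it is non-compact, so there is no top class for the long generator $\tilde L$ to ``cancel''; the paper sidesteps this by using only the degree $-1$ versus degree $0$ cancellation to bound the rank of $\tilde S$ (hence the covering degree), without attempting to pin down the full (co)homology of $\tilde K$. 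In short, your proposal front-loads everything onto the lifted complex, whereas the paper uses established double-point bounds to nail $|a|=n$, Maslov vanishing, and integral homology first, and reserves the lifted complex solely for the statement $\pi_1(K)=1$.
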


\begin{proof}
For the first case, any dga generated by a single element $a$ can only fail to be linearizable if $|a|_{2j} = 1$, for any value $j$ for which the dga admits a mod $2j$ grading.

Suppose then $LH(\tilde f(K);\bZ_2)$ admits a linearization. Since $\tilde f(K)$ is displaceable by Hamiltonian isotopy, the double point bounds of \cite{EES,EESa} apply and show that 
\begin{equation} \label{doublept}
\rk_{\bZ_2} (H^*(K;\bZ_2)) \leq 2
\end{equation}
which implies that $K$ is a $\bZ_2$-homology sphere.  Therefore $H^2(K;\bZ_2)=0$, so $K$ is spin and $LH(\tilde f(K);k)$ can be defined with coefficients in a characteristic zero field $k$. The double point bound \eqref{doublept} over $k$ then implies that $K$ is a rational homology sphere, hence has vanishing Maslov class, and \cite[Theorem 5.5]{EESa} then implies $|a|=n$. Finally, working with finite field co-efficients, one concludes that $K$ is a $\bZ$-homology sphere.  

It remains to prove that $\pi_1(K) = 1$. Consider a connected covering space $\pi\colon \tilde K\to K$ of $K$.  Then the complex $\tilde{C}^{\mathrm{lin}}(\tilde K,\pi;k)$ discussed above, has the form
\[
\tilde S\oplus \tilde C_{\mathrm{Morse}} \oplus \tilde L,
\]
where all elements in $\tilde S$ have grading $-1$ and all elements in $\tilde L$ grading $n+1$. Hamiltonian displaceability of $f(K)$ implies that the total homology of the complex vanishes. Since $\tilde{K}$ is connected, this in turn implies $|\tilde S|=1$.  That implies the covering $\pi$ has degree 1, and since the covering was arbitrary, that implies that $\pi_1(K) = 1$, as required.
\end{proof}

We should point out that one can also prove Lemma \ref{lemma:hmtpysphere} by applying the results of \cite{Damian} to the Lagrange surgery of $f(K)$, but that argument is somewhat more roundabout and involves more heavy algebraic machinery.

\section{Moduli spaces and breaking}\label{Sec:modulispaces}
 

\subsection{Displacing Hamiltonian and the Floer equation}\label{sec:dispham}
For the rest of the paper, we fix a Lagrangian immersion $f\colon\Sigma\to\bC^n$ of a homotopy $n$-sphere $\Sigma$ with exactly one transverse double point $a$. The double point has Maslov grading $|a|=n$.  
As above we write $\tilde f\colon \Sigma\to\C^{n}\times\R$ for the Legendrian lift of $f$, and we write $a^{\pm}\in \Sigma$ for the upper and lower endpoints of the Reeb chord corresponding to $a$.

Following \cite{Oh}, consider a compactly supported time-dependent Hamiltonian function
$H\colon \C^n \times [0,1] \to \R$ with the following properties:
\begin{itemize}
\item there exists $\epsilon_0>0$ such that $H_t$ is constant for $t\in [0,\epsilon_0]\cup[1-\epsilon_0,1]$;
\item the time $1$ flow $\phi^1$ of the Hamiltonian vector field $X_{H}$ of $H$ displaces $f(\Sigma)$ from itself: $\phi^1 (f(\Sigma)) \cap f(\Sigma) = \varnothing$.
\end{itemize}

We can suppose that $f(\Sigma)$ is real analytic \cite[Lemma 4.1]{EkholmSmith}, that the double point of $f$ lies at $0\in \bC^n$, and that the branches of $f(\Sigma)$ near the double point are flat and co-incide with $\bR^n \cup i\bR^n$ in a sufficiently small ball around the origin.

We fix a smooth family of smooth functions
\[
\alpha_r \colon \bR \longrightarrow [0,1] \quad \textrm{for} \ r \in [0,\infty)
\]
satisfying the conditions
\begin{itemize}
\item $\alpha_r(s) \ = \begin{cases} 0 \quad  |s| > r+1 \\ 1 \quad |s| \leq r
\end{cases}$
\item $\alpha_r'(s)$ is \,  $\begin{cases} < 0 \quad s\in (r, r+1) \\ >0 \quad s\in (-r-1, -r) \end{cases}$
\item $\alpha_r(s) = r\alpha_1(s)$ when $s\in [0,1]$, so in particular $\alpha_0 \equiv 0$.
\end{itemize}
If $D$ denotes the unit disk in $\C$ then there is a unique conformal map 
\begin{equation} \label{Eqn:xi}
\eta \colon \R\times[0,1]\to D \quad  \text{with} \quad \eta(\pm \infty)=\pm 1 \ \text{and} \ \eta(0)=-i,
\end{equation}
with inverse 
\begin{equation}\label{Eqn:xi-1}
\eta^{-1}=s+it\colon D\to\R\times[0,1].
\end{equation}
Let $\nu: [0,\infty) \rightarrow [0,1]$ be a non-decreasing smooth function which vanishes near $0$ and is identically $1$ for $r\geq 1$. 
Write $\gamma_r$ for the $1$-form on $D$ such that 
\[
\eta^{\ast}\gamma_{r}=\nu(r)\alpha_{r}(s)\,dt.
\]  

Fix a small $\delta>0$ and let $\JJ_\Sigma$ denote the space of almost complex structures on $\C^{n}$ which tame the standard symplectic form and which agree with the standard structure $J_0$ in a $\delta$-neighborhood of $f(\Sigma)$. Fix a finite set of points $\mathbf{z}=\{z_1,\dots,z_m\}\in\pa D$ subdivided into two subsets $\mathbf{z}=\mathbf{z}_{+}\cup\mathbf{z}_{-}$ and write
\[
D_{\mathbf{z}}=D-\mathbf{z}.
\]
Furthermore, for $p,q\ge 0$ write $D_{p,q}$ for $D_{\mathbf{z}}$ for an unspecified set $\mathbf{z}=\mathbf{z}_{+}\cup\mathbf{z}_{-}$ where there are $p$ boundary punctures in $\mathbf{z}_{+}$ and $q$ in $\mathbf{z}_{-}$. Note that $\partial D_{p,q}$ is a collection of $p+q$ open arcs. 
 For $r \in [0,\infty)$ and $J \in \JJ_\Sigma$ we will consider Floer disks with boundary on $\tilde f(\Sigma)$ with $p$ positive and $q$ negative boundary punctures at the double point $a$. Such a disk consists of the following:
\begin{itemize}
\item We have a map $u\colon (D_{p,q},\pa D_{p,q})\to (\C^{n},f(\Sigma))$ for which $u|_{\pa D_{p,q}}$ admits a continuous lift $\tilde u\colon \pa D_{p,q}\to \tilde f(\Sigma)$ with the property that for each positive boundary puncture $\xi\in\mathbf{z}_{+}$, $\lim_{\zeta\to \xi\pm}\tilde u(\zeta)=a^{\mp}$ and for each negative boundary puncture $\eta\in\mathbf{z}_{-}$, $\lim_{\zeta\to \eta\pm}\tilde u(\zeta)=a^{\pm}$. Here $\lim_{\zeta\to z+}$ and $\lim_{\zeta\to z-}$ denote the limits as $\zeta\to z$ in the counterclockwise and clockwise directions, respectively. 
\item For some $r(u) = r\in [0,\infty)$, the map $u$ solves the perturbed Cauchy-Riemann equation
\begin{equation} \label{Eqn:CR-1}
(du + \gamma_r \otimes X_H)^{0,1} = 0,
\end{equation}
where $A^{0,1}$ denotes the $(J, i)$ complex anti-linear part of the linear map $A\colon TD\to T\C^{n}$.
\end{itemize}

Let $\ell(a)$ denote the length of the Reeb chord corresponding to the double point $a$.

\begin{Lemma}\label{Lemma:energy}
There is a constant $C>0$ such that for every  Floer disk $u$ with $p$ positive and $q$ negative punctures, the following holds:
\[
\|du\|_{L^{2}}^{2}\le C
\left(
(p-q)\ell(a)+\int_{0}^{1} \left(\max H_{t}-\min H_{t}\right)dt
\right).
\]
\end{Lemma}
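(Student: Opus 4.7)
The plan is the standard Floer-type energy estimate, computing the $L^2$ norm of $Y := du + \gamma_r\otimes X_H$ via a pointwise identity and then combining Stokes' theorem on $\int u^*\omega$ with an integration-by-parts bound on the Hamiltonian correction.

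First, I would use the Floer equation $Y^{(0,1)} = 0$, which says $Y$ is $(J,i)$-linear, so in any local conformal coordinates $(s,t)$ one has $\frac{1}{2}|Y|^2 = \omega(Y(\partial_s), Y(\partial_t))$. Expanding this using $\omega(\cdot, X_{H_t}) = -dH_t$ and the fact that $\eta^*\gamma_r = \nu(r)\alpha_r(s)\,dt$ has no $ds$-component yields the pointwise identity
\[ \tfrac{1}{2}|Y|^2 \, ds \wedge dt \;=\; u^*\omega \;-\; \gamma_r \wedge u^*dH_t. \]
For the first term, Stokes' theorem (using $\omega = -d\theta$) reduces $\int_D u^*\omega$ to an integral over $\partial D_{p,q}$, which via the boundary lift $\tilde u$ and the Legendrian condition $\tilde f^*(dz-\theta)=0$ becomes $\int_{\partial D_{p,q}} d(z\circ\tilde u)$, a telescoping sum of asymptotic jumps. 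By the asymptotic conditions on $\tilde u$ at positive and negative punctures, each puncture contributes $\pm\ell(a)$, and (with the correct orientation convention) the total evaluates to $(p-q)\ell(a)$.

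For the Hamiltonian correction, setting $h(s,t) = H_t(u(s,t))$ and integrating by parts in $s$ (boundary terms vanish by the cutoff $\alpha_r$),
\[ \int_D \gamma_r \wedge u^*dH_t \;=\; \nu(r) \int \alpha_r'(s)\, h(s,t)\, ds\, dt. \]
Since $\int_{\bR} \alpha_r'(s)\, ds = 0$, the integrand is invariant under the shift $h \mapsto h - \min_x H_t(x)$; the shifted $h$ is then bounded pointwise by $\max H_t - \min H_t$, and $\int |\alpha_r'(s)|\, ds = 2$, giving $\bigl|\int_D \gamma_r\wedge u^*dH_t\bigr| \le 2\int_0^1(\max H_t - \min H_t)\, dt$. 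Combining the two steps bounds $\|Y\|_{L^2}^2$ by a constant times the right-hand side of the stated inequality, and the bound on $\|du\|_{L^2}^2$ then follows from the triangle inequality $\|du\|_{L^2} \le \|Y\|_{L^2} + \|\gamma_r \otimes X_H\|_{L^2}$, absorbing the controlled zeroth-order contribution $\|\gamma_r\otimes X_H\|_{L^2}$ into the constant $C$.

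The principal care-point in this argument is the sign and orientation tracking in the Stokes step: one must verify that the asymptotic conventions $\lim_{\zeta\to \xi\pm}\tilde u = a^{\mp}$ at positive punctures and $\lim_{\zeta\to \eta\pm}\tilde u = a^{\pm}$ at negative punctures, combined with the CCW orientation of $\partial D_{p,q}$, combine to yield $(p-q)\ell(a)$ rather than its negative. Once this is pinned down, the algebraic identity in Step~1 and the integration-by-parts in Step~3 are mechanical.
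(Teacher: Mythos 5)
Your core computation is exactly the paper's proof: the paper simply cites the first two calculations in Oh's energy lemma --- which are precisely your pointwise identity for $\tfrac12|Y|^2$ (up to the taming constant, which is where the paper's $C$ comes from) and the integration by parts using $\int_{\R}\alpha_r'(s)\,ds=0$ together with the oscillation bound --- and Stokes' theorem, evaluated via the primitive $z$ of $\theta$ along the Legendrian lift, to get $\int_{D_{p,q}}u^{\ast}\omega=(p-q)\ell(a)$. Up to that point your argument and the paper's are the same, and your sign bookkeeping at the punctures is the right thing to worry about but is immaterial for the final estimate since only the oscillation of $H_t$ enters.

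The one step you add beyond what the paper writes is the passage from $\|Y\|_{L^2}$ to $\|du\|_{L^2}$ by the triangle inequality, and as stated it does not hold up: the zeroth-order term is \emph{not} uniformly controlled, since in strip coordinates $\|\gamma_r\otimes X_H(u)\|_{L^2}^2\le 2(r+1)\sup_t\|X_{H_t}\|_{C^0}^2$ grows linearly in the a priori unbounded parameter $r=r(u)$; moreover an additive error can never be ``absorbed'' into a multiplicative constant $C$ in front of a right-hand side that is not bounded away from zero. What your computation (and Oh's, as cited) honestly delivers is the stated bound for $\|du+\gamma_r\otimes X_H\|_{L^2}^2$, equivalently for $\|\partial_s u\|_{L^2}^2$ together with the perturbed $t$-derivative $\|\partial_t u+\nu(r)\alpha_r X_H(u)\|_{L^2}^2$. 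That is also all that is used later in the paper (e.g.\ in Corollary \ref{Cor:Constants}, where $\gamma_0=0$, and in the displaceability argument excluding solutions with large $r$, after which a full $\|du\|_{L^2}$ bound does follow with an $r$-dependent constant). So your last sentence should either be dropped in favour of stating the estimate for the perturbed energy, or be justified by first invoking the a priori bound on $r$ for Floer disks --- which cannot be assumed at this stage without circularity, since that bound is itself derived from this energy estimate.
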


\begin{proof}
This follows from the first two calculations in \cite[Proof of Lemma 2.2]{Oh}, together   with Stokes' theorem which implies that  $\int_{D_{p,q}}u^{\ast}\omega=(p-q)\ell(a)$. (The value of the constant $C>0$ is related to the taming condition for $J$ and $\omega$.)
\end{proof}

If $r=0$, the Hamiltonian term in \eqref{Eqn:CR-1} vanishes identically, and Floer holomorphic disks are exactly holomorphic disks, i.e.~maps satisfying the unperturbed Cauchy-Riemann equations.   

\begin{Corollary} \label{Cor:Constants}
Suppose $r=0$. 
The space of solutions of \eqref{Eqn:CR-1} with no boundary punctures is canonically diffeomorphic to $\Sigma$, viewed as a space of constant maps.  Every non-constant solution to \eqref{Eqn:CR-1} has at least one positive puncture.
\end{Corollary}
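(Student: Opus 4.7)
The plan is to exploit the fact that when $r=0$, $\alpha_{0}\equiv 0$ and hence $\gamma_{0}=0$, so \eqref{Eqn:CR-1} reduces to the honest $J$-holomorphic equation $(du)^{0,1}=0$. Combined with exactness of $f$, this pins down the $\omega$-area of any Floer disk in terms of its punctures and forces the claimed rigidity.

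First I would compute $\int_{D_{p,q}}u^{\ast}\omega$ by Stokes' theorem. Since $\omega=-d\theta$ and the boundary of $u$ lifts continuously to $\tilde f(\Sigma)$, on each arc of $\partial D_{p,q}$ the pulled-back form satisfies $\tilde u^{\ast}(f^{\ast}\theta)=d(z\circ\tilde u)$, where $z$ is the height function of the Legendrian lift, so its integral along that arc equals the difference of values of $z\circ\tilde u$ at the two endpoints. By the prescribed puncture asymptotics, at each positive puncture the lift jumps from $a^{-}$ to $a^{+}$, contributing $+\ell(a)$ to the telescoping sum, and at each negative puncture the contribution is $-\ell(a)$. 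Exhausting $D_{p,q}$ by removing small half-disk neighborhoods of the punctures and discarding the boundary integrals over the small arcs in the limit (via the standard exponential asymptotic convergence of Floer disks at Reeb-chord punctures) yields
\[
\int_{D_{p,q}} u^{\ast}\omega \;=\; (p-q)\,\ell(a).
\]
Since $J\in\JJ_{\Sigma}$ tames $\omega$ and $u$ is $J$-holomorphic, the left-hand side is a positive multiple of $\|du\|_{L^{2}}^{2}$, so in particular it is non-negative.

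Both conclusions then follow at once. If $p=0$ the identity forces $\|du\|_{L^{2}}=0$ (the right-hand side being non-positive), so $u$ is constant and necessarily $q=0$ as well; this is the second statement. For the first statement, when $p=q=0$ the map $u$ is constant with some value $c\in f(\Sigma)$, and because $\partial D$ is connected its boundary lift $\tilde u\colon\partial D\to \tilde f(\Sigma)$ is also constant, with value a single point $\tilde c\in \tilde f(\Sigma)$. Since $\tilde f$ is a Legendrian embedding, $\tilde f(\Sigma)\cong\Sigma$, and the assignment $u\mapsto \tilde c$ gives a bijection with $\Sigma$; conversely every point $x\in\Sigma$ evidently produces such a solution via $u\equiv f(x)$ with lift $\tilde u\equiv \tilde f(x)$, and the resulting parametrization is smooth. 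The only step with any genuine content is the Stokes computation at the punctures, which is routine given the standard asymptotic behavior of Floer disks.
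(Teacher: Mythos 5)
Your energy argument is essentially the paper's own: the identity $\int_{D_{p,q}}u^{\ast}\omega=(p-q)\ell(a)$, obtained from Stokes' theorem via the continuous lift of $u|_{\partial D_{p,q}}$ to the Legendrian, together with the taming condition, is exactly the $r=0$ case of Lemma \ref{Lemma:energy} (which the paper simply cites rather than re-derives), and it correctly gives both conclusions about the maps themselves: any solution with $p=0$ has $\|du\|_{L^2}=0$, hence is constant, and a constant solution can have no punctures (either because the identity would then force $\int u^{\ast}\omega=-q\,\ell(a)<0$ for a map of zero area, or because a locally constant boundary lift on a single arc cannot attain the two distinct limits $a^{\pm}$ required at a puncture). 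So the second assertion of the corollary, and the set-theoretic description of the $p=q=0$ solutions, are fine.

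The gap is in the first assertion, that the solution space is \emph{canonically diffeomorphic} to $\Sigma$. You exhibit a smooth injection $\Sigma\to\XX_{0,0}$, $x\mapsto$ the constant map at $f(x)$, and note it is a bijection onto the solution set, but you never address the smooth structure the solution set is supposed to carry as the zero locus of the Fredholm section $\bar\pa$ in the configuration space. That is the content that is actually used later: Section \ref{Sec:transversality} invokes this corollary to conclude that $\FF_{0}^{0}$ is \emph{transversely cut out} and forms the boundary component $\Sigma$ of $\overline{\FF}_{0}$ in Theorem \ref{Thm:C1boundary}. For this one must check that the linearized operator at each constant solution is surjective, with kernel exactly the $n$-dimensional space of constant vector fields tangent to $f(\Sigma)$; this is the standard statement about the $\bar\pa$-operator on the disk with the constant Lagrangian boundary condition $\R^{n}$ that the paper's proof records. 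A smooth bijective parametrization by itself does not supply transversality (nor, strictly speaking, that the bijection is a diffeomorphism onto a cut-out submanifold), so this step needs to be added; once it is, your argument coincides with the paper's.
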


\begin{proof}
For the first statement, Lemma \ref{Lemma:energy} implies that the derivative of any solution is zero, hence the solutions are all constant maps.  The fact that the  moduli space of constant maps is transversely cut out and diffeomorphic to $\Sigma$  is standard, and follows from the surjectivity of the ordinary $\bar\pa$-operator for maps $D\to\C^{n}$ with constant boundary condition $\R^{n}$. 
 The second part is immediate from Lemma \ref{Lemma:energy}.
\end{proof}

\subsection{Linearization and transversality}\label{Sec:transversality}
The linearization of \eqref{Eqn:CR-1} is a $C^1$-Fredholm operator which depends smoothly on the parameters $r$ and $\mathbf{z}$. To make this claim precise, we begin by defining appropriate configuration spaces of maps,  following \cite{EES}.  We briefly recall the set-up; these configuration spaces are re-introduced in more detail in the Appendix, Section \ref{Ssec:functional-analytic}.


The $\R$-coordinate of  the Legendrian lift $\tilde f$ of $f$ is constant in a neighborhood of the double point. We assume that the value at the lower branch is $0$, and let $\ell$ denote its value at the upper branch. Given a subdivision $\mathbf{z}=\mathbf{z}_{+}\cup\mathbf{z}_{-}$,  fix a smooth function $h_{0}\colon\pa D_{\mathbf{z}}\to\R$ which is locally constant near each puncture, and which jumps up from $0$ to $\ell$ at $z_j \in \mathbf{z}_+$ and down from $\ell$ to $0$ at $z_j\in\mathbf{z}_{-}$, in each case as one moves counterclockwise around $\pa D_{\mathbf{z}}$.

Fix a metric on $D_{\mathbf{z}}$ which agrees with the standard metric in half-strip coordinates near each puncture. Let $\mathcal{H}(D_{\mathbf{z}}, \bC^n)$ denote the Sobolev space of maps having two derivatives in $L^2$. 
We define $\XX_{\mathbf{z}}\subset \sblv^{2}(D_{\mathbf{z}};\C^{n})\times \sblv^{\frac32}(\pa D_{\mathbf{z}};\R)$ to be the subset of pairs $(u,h)$, where $u\colon D\to \C^{n}$ and $h\colon\pa D\to\R$ satisfy the following conditions:
\begin{enumerate}
\item $(u|_{\pa D},h+h_0)\colon \pa D\to \C^{n}\times\R$ takes values in the Legendrian lift $\tilde f(\Sigma)$ of $f\colon \Sigma\to\C^{n}$; 
\item $du^{0,1}|_{\pa D}=0$.
\end{enumerate}
Allowing $\mathbf{z}=\mathbf{z}_{+}\cup\mathbf{z}_{-}$ to vary, the configuration space $\XX_{p,q}$ fibers over the space $Z_{p,q} \subset (\partial D)^{p+q}$ of possible configurations of $p$ positive and $q$ negative boundary punctures, with fiber  $\XX_{\mathbf{z}}$ at a given set of punctures $\mathbf{z}$.  Furthermore, let $\YY_{\mathbf{z}} \to \XX_{\mathbf{z}}$ denote the bundle with fiber over $u\in\XX_{\mathbf{z}}$ equal to the closed subspace $\dot\sblv^{1}(D_{\mathbf{z}};\Hom^{0,1}(TD_{\mathbf{z}},u^{\ast}(T\C^{n})))$ of complex anti-linear maps $A$ with vanishing trace $A|_{\pa D_{\mathbf{z}}}$ in $\sblv^{\frac12}(\pa D_{\mathbf{z}})$.  These spaces fit together to give a bundle $\YY_{p,q}\to\XX_{p,q}$, and  Equation \eqref{Eqn:CR-1} can be viewed as giving a Fredholm section of that bundle.

We write $\FF_{p,q}^{r}$ for the space of Floer disks with $p$ positive and $q$ negative punctures satisfying \eqref{Eqn:CR-1} with parameter $r$. Let $\FF_{p,q}=\bigcup_{r\in[0,\infty)}\FF_{p,q}^{r}$.  Finally, let $\MM_{p,q}$ be the moduli space of holomorphic disks (solutions to the unperturbed Cauchy-Riemann equation) with $p$ positive and $q$ negative punctures, modulo automorphism.

\begin{Lemma} \label{Lem:formaldim}
The formal dimensions of the solution spaces are the following:
\begin{align}
\dim(\FF_{p,q}^{r})&=2p+(1-q)n,\\
\dim(\FF_{p,q})&=2p+(1-q)n+1,\\
\dim(\MM_{p,q})&=(n-3)+2p-nq. 
\end{align}
\end{Lemma}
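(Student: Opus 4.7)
The plan is to extract all three formal dimensions from a single Fredholm index computation, and then to add back the dimensions of the remaining free parameters (puncture positions on $\partial D$, the parameter $r$, and the automorphism quotient for $\MM_{p,q}$).

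First, fix $r\in[0,\infty)$ and a puncture configuration $\mathbf{z}=\mathbf{z}_+\cup\mathbf{z}_-\in Z_{p,q}$. The linearization of \eqref{Eqn:CR-1} at a Floer disk $u$ is a bounded linear map from the tangent space $T_u\XX_{\mathbf{z}}$ to the fiber of $\YY_{\mathbf{z}}$ at $u$; the suitable weighted Sobolev framework at the punctures—encoding exponential convergence to the Reeb chord $a$ and making this operator Fredholm—is set up in Appendix \ref{Appendix}. The Hamiltonian perturbation in \eqref{Eqn:CR-1} is a compact perturbation of the $\bar\partial$-operator and so does not affect the index.

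Second, compute this Fredholm index by the standard Riemann--Roch formula for Cauchy--Riemann operators on punctured Riemann surfaces with totally real boundary conditions (as in \cite{EES}). The inputs are the Euler characteristic of $D_{\mathbf{z}}$, the Maslov class of $f$ (which vanishes by Lemma \ref{lemma:hmtpysphere}), and a Conley--Zehnder-type contribution at each puncture governed by the Maslov grading $|a|=n$ of the unique Reeb chord, with opposite sign conventions for positive and negative ends. Combining these yields the Fredholm index $n+p-q(n+1)$ at fixed $(r,\mathbf{z})$.

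Third, add back the free parameters. For $\FF_{p,q}^{r}$, the punctures $\mathbf{z}$ vary over $Z_{p,q}$, contributing $p+q$, so that $\dim(\FF_{p,q}^{r})=(n+p-q(n+1))+(p+q)=2p+(1-q)n$. For $\FF_{p,q}$ the parameter $r\in[0,\infty)$ is additionally free, adding $1$. For $\MM_{p,q}$ one restricts to $r=0$ and quotients by the three-dimensional reparametrization group $PSL(2,\bR)$, subtracting $3$, which gives $(n+p-q(n+1))+(p+q)-3=(n-3)+2p-nq$.

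The main technical point is the first step: verifying that the linearized operator is genuinely Fredholm between the weighted Sobolev completions used to define $\XX_{\mathbf{z}}$ and $\YY_{\mathbf{z}}$, and that its index is indeed given by the Riemann--Roch formula in the presence of boundary punctures. This is standard but somewhat delicate, and is the content of the analytic preliminaries in Appendix \ref{Appendix}; once it is in place, the rest of the calculation is a direct bookkeeping of free parameters.
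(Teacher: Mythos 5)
Your proposal is correct and follows essentially the same route as the paper's proof: both reduce to the Fredholm index of the linearized Cauchy--Riemann operator (using that the Maslov class vanishes and that $|a|=n$) and then account for the finite-dimensional parameters (puncture positions on $\partial D$, the parameter $r$, and the $PSL(2,\bR)$ reparametrization). The only cosmetic difference is that the paper obtains the general index from the base case $n+1$ for $(p,q)=(1,0)$ cited from \cite{EES1} together with gluing additivity, while you quote the Riemann--Roch formula directly to get the fiberwise index $n+p-q(n+1)$; these are equivalent, and the remaining bookkeeping is identical.
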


\begin{proof}
Since $\Sigma$ is a homotopy sphere the Maslov index of $f$ vanishes. The index of the linearized problem corresponding to a holomorphic disk with a positive puncture at $a$ of grading $|a|=n$ was computed in \cite{EES1} to be $n+1$. The formulae above then follow from repeated application of the additivity of the index under gluing and the fact that the dimension of the space of conformal structures on a disk with $p+q$ punctures equals $p+q-3$. 
\end{proof}

The perturbation theory for obtaining transversality for the spaces $\FF_{p,q}$ is straightforward, see Section \ref{app:CRFredholm}, and gives the following result. 

\begin{Lemma}\label{Lem:actualdimensions}
For generic $J \in \JJ_\Sigma$ and generic displacing Hamiltonian,
$\FF_{p,q}$ is a smooth transversely cut out manifold of dimension $2p+(1-q)n+1$. 
\end{Lemma}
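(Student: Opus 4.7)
The plan is to exhibit $\FF_{p,q}$ as the zero set of a smooth Fredholm section of a Banach bundle over an enlarged configuration space parametrizing the almost complex structure $J$, the displacing Hamiltonian $H$, and the parameter $r$, and then to apply the Sard--Smale theorem to the projection onto the space of auxiliary data.

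First, fix a large integer $\ell$ and let $\JJ^{\ell}_\Sigma$ denote the Banach manifold of $C^\ell$ almost complex structures satisfying the standing conditions of Section \ref{sec:dispham}, and $\mathbf{H}^{\ell}$ the Banach manifold of compactly supported $C^\ell$ displacing Hamiltonians with the same asymptotic behaviour. Over the product $\XX_{p,q} \times [0,\infty) \times \JJ^{\ell}_\Sigma \times \mathbf{H}^{\ell}$ form the Banach bundle whose fiber at $(u,r,J,H)$ is $\YY_{p,q}|_u$ and whose tautological section sends $(u,r,J,H)$ to $(du + \gamma_r \otimes X_H)^{0,1}$; let $\widetilde{\FF}_{p,q}$ denote its zero set.

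The main step is to verify that the linearization of this universal section is surjective at every point of $\widetilde{\FF}_{p,q}$. Write $D_u$ for the Fredholm linearization at fixed auxiliary data and $L_u$ for the derivative in the auxiliary parameters $(r,J,H)$; we must show $D_u + L_u$ is onto. Suppose $\eta$ lies in the $L^2$-cokernel: then $\eta$ satisfies the formal adjoint equation of $D_u$, which is linear elliptic, so $\eta$ is smooth and obeys unique continuation. It therefore suffices to produce, for any nonzero such $\eta$, a point $z_0 \in D_{\mathbf{z}}$ with $\eta(z_0) \neq 0$ together with a perturbation in either the $J$-direction or the $H$-direction that pairs nontrivially with $\eta(z_0)$ against the integration pairing. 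For $r > 0$, at any interior point where $X_H(u)$ is nonzero a localized bump perturbation of $H$ supported in a small neighborhood of $u(z_0) \in \bC^n$ produces such a pairing; elsewhere one uses a localized perturbation of $J$ supported near a point whose image lies outside the $\delta$-neighborhood of $f(\Sigma)$, where $J$ is unconstrained.

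The main obstacle is the constraint that $J$ be fixed on a neighborhood of $f(\Sigma)$, which becomes delicate on the boundary stratum $r = 0$ where $H$-variations are invisible. Constant solutions (which only occur for $p = q = 0$) are dealt with directly by Corollary \ref{Cor:Constants}. A non-constant $J$-holomorphic disk at $r = 0$ cannot have image contained entirely in the $\delta$-neighborhood of $f(\Sigma)$: near the double point the branches of $f(\Sigma)$ coincide with the flat totally real pair $\bR^n \cup i\bR^n$, so Schwarz reflection would extend such a disk across its boundary arcs to a holomorphic map into an open region of $\bC^n$ with vanishing symplectic area, contradicting the positive energy $\int u^\ast \omega = (p-q)\ell(a)$ of Lemma \ref{Lemma:energy} once $p+q > 0$. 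Hence $J$-perturbations are available on a non-empty open subset of $D_{\mathbf{z}}$, and the standard somewhere-injectivity argument, using the continuous lift of boundary arcs to $\tilde f(\Sigma)$ as in \cite{EES} to rule out purely multiply-covered configurations, completes the proof of surjectivity. Once this is in place, $\widetilde{\FF}_{p,q}$ is a Banach manifold and the projection $\pi$ to $\JJ^{\ell}_\Sigma \times \mathbf{H}^{\ell}$ is Fredholm of index $2p+(1-q)n+1$, the extra $+1$ coming from the parameter $r \in [0,\infty)$; the Sard--Smale theorem produces a comeager set of $(J,H)$ at which $\FF_{p,q} = \pi^{-1}(J,H)$ is smooth of the asserted dimension, and Taubes' trick of intersecting over increasing $\ell$ upgrades this to a comeager set of $C^\infty$ pairs.
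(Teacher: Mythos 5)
Your overall framework matches the paper's: the paper also treats this as a standard Sard--Smale argument for a universal Fredholm section over the auxiliary data (see Section \ref{app:CRFredholm}), and on the locus where the Hamiltonian term is active it argues exactly as you do, via variations of the Hamiltonian vector field together with unique continuation. The gap is in your treatment of the locus where the Hamiltonian perturbation is switched off. Note first that this is not just the stratum $r=0$: since $\nu$ vanishes near $0$, the form $\gamma_r$ is identically zero for all $r$ in an interval $[0,r_*)$, so on that whole range $\FF_{p,q}^{r}$ consists of unperturbed holomorphic disks and bump perturbations of $H$ are invisible no matter where $X_H$ is non-zero; your phrase ``for $r>0$'' does not cover this.

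For that unperturbed locus your argument has two genuine problems. First, the claim that a non-constant holomorphic disk must leave the fixed $\delta$-neighborhood of $f(\Sigma)$ is not established by the Schwarz reflection step: reflection is only available where the boundary condition is the flat pair $\R^{n}\cup i\R^{n}$, i.e.\ near the double point, not along the whole boundary, and a reflected holomorphic map does not have ``vanishing symplectic area'', so no contradiction with Lemma \ref{Lemma:energy} arises (whose right-hand side is positive only when $p>q$, not when $p+q>0$; for $p=0<q$ the energy identity simply shows there are no non-constant solutions, which is Corollary \ref{Cor:Constants}). Worse, the conclusion itself is unjustified for the fixed $\delta$ of Section \ref{sec:dispham}: for example, for a strongly squashed Whitney sphere the one-positive-puncture disks lie within an arbitrarily small neighborhood of the Lagrangian, so perturbations of $J$ supported outside the $\delta$-neighborhood need not affect them at all; the paper nowhere relies on such an escape statement. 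Second, even granting access to $J$-perturbations, you dismiss multiple covers by appeal to ``the standard somewhere-injectivity argument'', whereas the paper explicitly notes that for $p>1$ solutions may be multiply covered and transversality cannot be achieved by perturbing $J$ alone; the lemma is rescued because in all cases actually used the unperturbed disks have exactly one positive puncture, and then the branch-counting argument at the positive puncture of \cite[Lemma 4.5 (1)]{EES} applies. Your proposal neither restricts to that situation nor supplies a substitute, so the key surjectivity step on the unperturbed stratum is not proved.
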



Below we will only use Floer disks with negative punctures; to simplify notation we write $\FF_{q}=\FF_{0,q}$. Lemma \ref{Lem:actualdimensions}  shows that for generic data $\FF_{q}=\varnothing$ for $q>1$, $\FF_{1}$ is $1$-dimensional, and $\FF_{0}$ is $(n+1)$-dimensional. 

Corollary \ref{Cor:Constants} implies that  the space $\FF_{0}^{0}$ of constant maps is transversely cut out and is the boundary of $\FF_{0}$.

\subsection{Breaking} \label{Sec:Breaking}

Let $\MM = \MM_{1,0}$ denote the space of holomorphic disks with exactly one positive puncture.  Let $G_1 \subset PSL_2(\bR)$ denote the subgroup of maps fixing $1\in \partial D$.
The space $\MM$ is a quotient of a space of parametrized holomorphic disks by the reparametrization action of $G_1$, hence is not directly a submanifold of a Banach space.

\begin{Lemma} \label{Lem:holdisks1puncture}
For generic $J \in \scrJ_\Sigma$ and displacing Hamiltonian, the moduli space $\MM$ is a compact $C^1$-smooth manifold of dimension $n-1$.
\end{Lemma}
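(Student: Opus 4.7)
The plan is three-fold: (1) establish transversality for a parametrized moduli space, (2) rule out every Gromov--Floer degeneration to obtain compactness, and (3) descend to the quotient by the reparametrization action to obtain a $C^1$-smooth manifold of the claimed dimension. The main obstacle is the compactness step, and in particular the exclusion of Reeb chord breakings at the puncture, which rests on a negative-index computation that uses both exactness and the maximal grading $|a|=n$ from Lemma \ref{lemma:hmtpysphere}.

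For (1), consider the Banach manifold $\widetilde{\MM}$ of $J$-holomorphic maps $u\colon D_{1,0} \to \bC^n$ whose boundary admits a continuous lift to $\tilde f(\Sigma)$, with the positive puncture asymptotic to $a$ fixed at $1 \in \pa D$. By Lemma \ref{Lem:formaldim}, the linearized Cauchy--Riemann operator has Fredholm index $n+1$. Every $u \in \widetilde{\MM}$ is somewhere injective: at the puncture $u$ covers the Reeb chord $a$ with multiplicity one and is therefore simple, and by exactness $\int u^*\omega = \ell(a) > 0$, so by monotonicity for $J_0$-holomorphic curves the image of $u$ cannot lie in an arbitrarily thin tubular neighborhood of $f(\Sigma)$. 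At a somewhere-injective point in the interior of $\bC^n$ the almost complex structure $J \in \scrJ_\Sigma$ may be freely varied, so the usual Sard--Smale argument yields $\widetilde{\MM}$ as a smooth manifold of dimension $n+1$ for generic $J$.

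For (2), by Gromov--Floer compactness for holomorphic curves with boundary on an exact immersed Lagrangian, a sequence in $\MM$ has a subsequential limit that is a broken configuration built from spheres, disks, and Reeb chord strip-breakings. Interior sphere bubbles are ruled out by exactness of $\omega$ on $\bC^n$. A boundary disk bubble without punctures satisfies $\int v^*\omega = (p-q)\ell(a) = 0$ and is constant by Corollary \ref{Cor:Constants}, hence unstable and absent from the compactification. A nontrivial Reeb chord breaking would produce a principal component $u_1 \in \MM_{1,k}$ with $k \geq 1$ matched puncture pairs; but by Lemma \ref{Lem:formaldim}, $\dim \MM_{1,k} = n-1-nk < 0$ for all $k \geq 1$ (using $n\geq 3$), so $\MM_{1,k}$ is empty for generic $J$. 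Hence the compactification of $\MM$ is $\MM$ itself.

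For (3), the $2$-dimensional group $G_1$ of Möbius transformations of $D$ fixing $1$ acts freely on $\widetilde{\MM}$: a nontrivial element of $G_1$ acts freely on the interior of $D$ and so cannot preserve a somewhere-injective holomorphic map. The quotient $\MM = \widetilde{\MM}/G_1$ is therefore a manifold of dimension $(n+1)-2 = n-1$; the $C^1$ regularity rather than $C^\infty$ reflects the Fredholm/gluing framework summarized in Appendix \ref{Appendix}, within which this lemma fits uniformly.
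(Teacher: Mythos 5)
Your steps (1) and (2) are in essence fine, and in fact (2) can be streamlined: since $f$ is exact, every disk in $\MM$ has $\int u^*\omega = \ell(a)$, which is the minimum positive energy available, so no piece of a broken limit could carry positive energy — this is the paper's one-line Stokes argument, and it makes your casework about sphere bubbles, boundary bubbles and Reeb chord breakings unnecessary. Your transversality argument via somewhere-injectivity and monotonicity is a legitimate alternative to the paper's branch-counting argument (cf.~\cite[Lemma 4.5(1)]{EES}), though both need the observation that the positive puncture asymptotic to $a$ forces the map to be simple.

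The genuine gap is in step (3). You assert that because $G_1$ acts freely, the quotient $\widetilde{\MM}/G_1$ is a manifold of dimension $n-1$, and you attribute the $C^1$ structure vaguely to the Fredholm framework. This is exactly the point the paper singles out as problematic and then works around. The group $G_1$ is noncompact, and its reparametrization action distorts the $\sblv^2$-norms on the configuration spaces by arbitrarily large amounts: automorphisms that concentrate boundary points near the puncture are unbounded on Sobolev spaces, so there is no uniform, smooth $G_1$-action on the Banach manifold setting in which the Fredholm theory lives. Freeness of the pointwise action on the finite-dimensional zero set does not by itself produce a manifold structure; one would at minimum need a slice. The paper constructs exactly such a slice by gauge-fixing: it marks $\pm i \in \partial D$ and requires $u(\mp i) \in S_\epsilon(a^\pm)$, then defines $\MM = \bar\pa^{-1}(0)\cap \XX'_{1,0}$ as in Equation \eqref{Eqn:MMsmooth}. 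For this constrained configuration space to be a Banach submanifold of the right codimension — and hence for $\dim \MM = (n+1) - 2 = n-1$ — one must know that the relevant intersections with $S_\epsilon(a^\pm)$ are transverse. That requires Lemma \ref{Lem:nondegasympt}, a separate Fredholm/weight argument showing that the leading Fourier coefficient of any solution is generically nonzero, together with Lemma \ref{l:smallsphere}. None of this is supplied by, or implicit in, the assertion that $G_1$ acts freely, so your proof as written does not establish the $C^1$-smooth structure.
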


\begin{proof}
The existence of a smooth structure on $\MM$ follows from a gauge-fixing procedure which we give in Section \ref{ssec:1+gauge}. It realizes $\MM$ as a submanifold of a Banach manifold, cf.~Equation \eqref{Eqn:MMsmooth}. Its dimension is then given by the virtual dimension of Lemma \ref{Lem:formaldim}. 
 By Stokes' theorem, disks in $\MM$ have minimal area amongst non-constant holomorphic disks, whence Gromov compactness \cite{Oh:bubble} implies that $\MM$ is compact. 
 \end{proof}

\begin{Lemma}\label{lemma:weakmoduli}
If $J \in \JJ_\Sigma$ and $H\colon\C^{n}\times[0,1]\to\R$ are generic, then the space $\FF_{1}$ is a compact $C^{1}$-manifold of dimension $1$, and $\FF_{0}$ is a non-compact $C^{1}$-manifold of dimension $n+1$ with Gromov-Floer boundary diffeomorphic to the product $\FF_{1}\times\MM$.
\end{Lemma}

\begin{proof}
By Gromov compactness, any sequence in $\FF_{q}$ has a subsequence converging to a broken disk in $\FF_{q+j}$ with $j$ holomorphic disks from $\MM$ attached at negative punctures. By Lemma \ref{Lem:actualdimensions}, $\FF_{q}=\varnothing$ if $q>1$. The first statement follows. For the second statement, the argument just given shows that elements in $\FF_{1}\times \MM$ are the only possible broken configurations in the Gromov-Floer boundary. A standard gluing argument, given in detail in Appendix \ref{Appendix} (and concluded in Section \ref{Ssec:NhoodBdary}) shows that all elements in the product appear as limits of sequences in $\FF_{0}$. 
\end{proof}

Lemma \ref{lemma:weakmoduli} refers, via the gauge-fixing procedure alluded to in Lemma \ref{Lem:holdisks1puncture} and discussed in detail in Section \ref{ssec:1+gauge}, only to the smoothness of the Gromov-Floer boundary of the compactification of $\FF_0$, and not to the smoothness of the compactified moduli space itself.  The following theorem asserts the stronger conclusion. 

\begin{Theorem}\label{Thm:C1boundary}
For generic $J \in \JJ_\Sigma$ and Hamiltonian $H\colon\C^n\times[0,1]\to\R$, the compactified moduli space of Floer disks
\[
\overline{\FF}_{0}=\FF_{0}\cup (\FF_{1}\times\MM)
\]
admits the structure of a $C^{1}$-manifold with boundary, $\pa\overline{\FF}_{0}=\Sigma\, \sqcup\, (\FF_{1}\times\MM)$.
\end{Theorem}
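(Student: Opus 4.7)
The plan is to construct, around each broken configuration $(u_\infty, v_\infty) \in \FF_{1} \times \MM$, a gluing chart of the form $\Phi \colon W \times [0, \epsilon) \to \overline{\FF}_{0}$, where $W \subset \FF_{1} \times \MM$ is a small open neighborhood and $\rho \in [0, \epsilon)$ is a rescaled gluing parameter arranged so that $\rho = 0$ corresponds to the broken configuration (e.g.~a reparametrization of the Floer parameter $r$ with $\rho \to 0$ as $r \to \infty$). I want $\Phi$ to be a $C^1$-diffeomorphism onto its image with $\Phi(W \times \{0\}) = W$ sitting inside the boundary stratum. Combining such charts with the $r = 0$ boundary, where Corollary \ref{Cor:Constants} already realizes $\Sigma$ as a transversely cut out boundary component, then yields the claimed $C^1$-manifold-with-boundary structure on $\overline{\FF}_{0}$ with $\partial\overline{\FF}_{0} = \Sigma \sqcup (\FF_{1}\times \MM)$.

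First I would carry out a pre-gluing. Because the two branches of $f(\Sigma)$ at the double point are flat and coincide with $\R^n \cup i\R^n$ in a small ball, the asymptotic models for the negative puncture of $u_\infty$ and the positive puncture of $v_\infty$ agree: in half-strip coordinates near the puncture, both maps converge exponentially to the same Reeb chord trajectory. Cutting off $u_\infty$ and $v_\infty$ outside a neck region of length of order $|\log \rho|$, and interpolating by convex combination in the flat linear coordinates, produces a pre-glued map $w_\rho$ on an unpunctured disk satisfying \eqref{Eqn:CR-1} up to an error of order $\rho^\kappa$ for some $\kappa > 0$, measured in an exponentially weighted Sobolev norm on the neck.

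Next I would invoke a quantitative implicit function theorem. By transversality (Lemmas \ref{Lem:actualdimensions} and \ref{Lem:holdisks1puncture}) the linearized operators at $u_\infty$ and $v_\infty$ are surjective, and a standard cut-off and gluing of their right inverses yields a uniformly bounded right inverse $Q_\rho$ of the linearization of \eqref{Eqn:CR-1} at $w_\rho$, acting on the appropriate exponentially weighted Sobolev spaces introduced in Section \ref{Sec:transversality}. A Newton iteration then produces a unique small correction $\xi_\rho$ such that $w_\rho + \xi_\rho$ is a genuine Floer solution in $\FF_{0}$, depending smoothly on $\rho > 0$ and on the base point in $W$, and converging in $C^{k}_{\mathrm{loc}}$ to the broken configuration as $\rho \to 0$.

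The main obstacles are two. First, the reparametrization group $G_1$ makes $\MM$ a quotient, so the gluing must be performed after fixing a gauge compatible with the gauge-fixing used to endow $\MM$ with its $C^1$-structure in Lemma \ref{Lem:holdisks1puncture} and Section \ref{ssec:1+gauge}. Second and more seriously, a naive gluing produces only a $C^0$ extension of the corrected family to $\rho = 0$; achieving $C^1$-regularity there requires controlling $\partial_\rho(w_\rho + \xi_\rho)$ uniformly in a norm adapted to the breaking, and showing that this derivative admits a continuous limit as $\rho \to 0^+$ which matches the collar tangent direction of $\FF_{1} \times \MM$. This is accomplished by choosing the parametrization $\rho$ so that the pre-glued family itself extends $C^1$ down to $\rho = 0$, combined with uniform estimates on $Q_\rho$ and its $\rho$-derivative. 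Injectivity of $\Phi$ and surjectivity onto a neighborhood of the broken stratum then follow from the standard annulus and Gromov-Floer compactness arguments: any sequence in $\FF_{0}$ converging to $(u_\infty, v_\infty)$ is eventually in the image of $\Phi$ and uniquely represented there. The detailed analytic estimates are carried out in Appendix \ref{Appendix}.
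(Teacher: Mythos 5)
Your overall skeleton (pre-gluing at the double point using the flat local model $\R^n\cup i\R^n$, Newton iteration off a uniformly bounded right inverse obtained by splicing the right inverses at $u_\infty$ and $v_\infty$, gauge-fixing to handle the $G_1$-quotient defining $\MM$, and injectivity/surjectivity onto a neighborhood of the broken stratum via Gromov--Floer compactness) is the same as the paper's, which runs this program via the Floer--Picard Lemma \ref{Lem:FloerPicard}, the pre-gluing of Section \ref{Ssec:Pre-gluing}, and Theorem \ref{Thm:gluing}. The difference is at the final, and hardest, step. You propose charts $\Phi\colon W\times[0,\epsilon)\to\overline{\FF}_0$ that \emph{contain} the broken configurations at $\rho=0$ and are $C^1$ up to and including $\rho=0$; your justification is the assertion that one can ``choose the parametrization $\rho$ so that the pre-glued family itself extends $C^1$ down to $\rho=0$'' together with uniform control of $Q_\rho$ and $\partial_\rho Q_\rho$. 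That is precisely the point that is not established, and it is genuinely delicate: after reparametrizing the glued domain back to the fixed disk $D$, the paper's own estimate \eqref{Eq:DerivativeBound} shows $|\partial_\rho(\Psi_\rho(u))|_{C^0}\ge Ce^{\pi\rho}$, so in the natural configuration-space norms the glued family does \emph{not} extend differentiably to the broken stratum without a carefully chosen gluing profile, and your proposal specifies neither the profile nor the estimates (uniform bounds on $\partial_\rho$ of the pre-glued family, of the right inverse, and of the Newton correction in a norm adapted to the breaking) that would be needed. As written, the claim ``$\Phi(W\times\{0\})=W$ sits $C^1$-smoothly inside the boundary'' is a gap, not a consequence of the preceding steps.

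The paper avoids this issue entirely, and it is worth noting how, since it is the real content of Theorem \ref{Thm:C1boundary}: the gluing map $\Psi\colon\FF_1\times\MM\times[\rho_0,\infty)\to\FF_0$ is shown (Lemma \ref{Lem:gluemb}, Theorem \ref{Thm:gluing}) to be a $C^1$ embedding onto the neighborhood $U_M$ of the Gromov--Floer boundary, and $\overline{\FF}_0$ is then \emph{defined} as the $C^1$-manifold with boundary obtained by gluing the two pieces $\FF_0-\overline{U_M}$ and the finite cylinder $\FF_1\times\MM\times[\rho_0,\rho_1]$ along $\Psi$, declaring the $\{\rho_1\}$-slice to be the boundary. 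In this model the broken solutions themselves never lie in any chart, so no $C^1$-extension to $\rho=\infty$ (your $\rho=0$) is ever needed; the $\Sigma$-boundary at $r=0$ is handled exactly as you say, via Corollary \ref{Cor:Constants}. If you want to keep your stronger formulation, you must either prove the $C^1$-collar extension (choice of gluing profile plus uniform $\rho$-derivative estimates through the Newton iteration), or weaken your construction to the paper's two-piece model, which suffices for the statement of Theorem \ref{Thm:C1boundary}.
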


By a classical result in differential topology, see e.g.~\cite{Hirsch}, any $C^{1}$-atlas on a finite dimensional manifold contains a uniquely determined compatible $C^{\infty}$-atlas.  

A detailed and essentially self-contained proof of Theorem \ref{Thm:C1boundary}, which is somewhat simpler than its counterpart in \cite{EkholmSmith}, occupies Appendix \ref{Appendix}.  In broad outline, cf.~Section \ref{Ssec:Overview}, a Newton iteration and rescaling argument gives a smooth embedding $\Psi$ of $\FF_{1}\times\MM\times[\rho_0,\infty)$ into $\FF_{0}$ that we show covers a neighborhood $U_M= \{u\in\FF_{0}\colon \sup_{z\in D}|du(z)|>M\}
$ of the Gromov-Floer boundary.  The embedding $\Psi$ associates to a triple $(u^-, u^+, \rho)$ a Floer holomorphic disk on a domain $\Delta_{\rho} = D^- \#_{\rho} D^+$ which contains a long neck $[-\rho,\rho]\times[0,1]$, cf.~Section \ref{ssec:glueddomains}.  The desired smooth manifold with boundary $\overline{\FF}_0$ is obtained from the two pieces $\FF_0-\overline{U_M}$, for $M>0$ large, and $\FF_1\times\MM\times[\rho_0,\rho_1]$ for $\rho_0, \rho_1 \gg 0$ sufficiently large; in particular, there is a $C^1$-smooth model for $\overline{\FF}_0$ which does not contain  the broken solutions themselves.

\subsection{Tangent bundles and index bundles}\label{Sec:tangentbdles}

Recall the configuration spaces $\XX_{p,q}$ for (Floer) holomorphic disks with boundary on $\Sigma$, with $p$ positive and $q$ negative boundary punctures, modelled on Sobolev spaces with two derivatives in $L^{2}$.  It is standard that the $KO$-theory class of the tangent bundle of the solution space $\FF_{p,q}$ and $\MM_{p,q}$ of the perturbed and unperturbed $\bar\pa$-equation, respectively, is given by the linearization of the $\cdbar$-operator, which in turn is globally restricted from the ambient configuration space  
\[
\left[ T\XX_{p,q} \xrightarrow{D(\bar\pa)}\YY_{p,q} \right],
\]
where $\YY_{p,q}$ is the Sobolev space of complex anti-linear maps. More precisely, for a Lagrangian immersion $L \rightarrow \bC^n$, any map $D_{p,q} \rightarrow \bC^n$ with boundary on $L$ defines a $\bar\pa$-operator in a trivial bundle over the disk. This is a formally self-adjoint Fredholm operator, and hence there is a polarized Hilbert bundle (polarized by the positive and negative spectrum) over the configuration space of all smooth disk maps.  Any such bundle is classified by a map 
\begin{equation} \label{Eq:Classifyindex}
K_{p,q}: \textrm{Maps}((D_{p,q}, \partial D_{p,q}), (\bC^n, L)) \longrightarrow BGL_{res} \simeq U/O
\end{equation}
into the restricted Grassmannian of Hilbert space, which by \cite{PressleySegal} is homotopy equivalent to $U/O$.  (The induced map on constants is exactly the stable Gauss map of the immersion.)  The map \eqref{Eq:Classifyindex} tautologically classifies the index bundle of the $\bar\pa$-operator over configuration space. 

The embedding $\Psi$ mentioned after Theorem \ref{Thm:C1boundary} is the composition of a pre-gluing map $PG: \FF_1\times\MM\times[\rho_0,\infty) \rightarrow \XX_{0,0}$ with a Newton iteration and a rescaling.  The underlying pre-gluing map, which splices disks with positive and negative punctures $D^{\pm}$ to obtain the domain $\Delta_{\rho} = D^- \#_{\rho} D^+$ containing a long neck, can be extended to arbitrarily large compact subsets of the configuration spaces themselves, at the cost of increasing $\rho$.  Slightly abusively, we denote by 
\begin{equation} \label{Eqn:ConfigSpacePregluing}
\Pre: \XX_{0,1} \times \XX_{1,0} \longrightarrow \XX_{0,0}
\end{equation}
the resulting map, which is well-defined up to homotopy on any compact subset of the domain.

\begin{Lemma} \label{Lem:GlueIndex}
The pregluing map of \eqref{Eqn:ConfigSpacePregluing} is compatible with the classifying maps \eqref{Eq:Classifyindex} of the index bundles, i.e.  $K_{0,0} \circ PG \simeq K_{0,1}\times K_{1,0}$ are homotopic as maps  (on compact subsets) $\XX_{0,1} \times \XX_{1,0}\to U/O$.
\end{Lemma}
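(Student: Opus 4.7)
The classifying space $BGL_{res}\simeq U/O$ carries an H-space structure induced by direct sum of polarized Hilbert subspaces, and classifying maps of polarized Hilbert bundles are additive for this structure. So the claim $K_{0,0}\circ\Pre\simeq K_{0,1}\times K_{1,0}$ reduces to producing, over arbitrarily large compact subsets $N\subset\XX_{0,1}\times\XX_{1,0}$ and for $\rho=\rho(N)\gg 0$, a stable isomorphism of polarized index bundles
\begin{equation*}
\Pre^{\ast}\operatorname{ind}(\bar\partial)_{0,0}\;\simeq\;\operatorname{ind}(\bar\partial)_{0,1}\oplus\operatorname{ind}(\bar\partial)_{1,0}
\end{equation*}
depending smoothly on $(u^-,u^+,\rho)$ and unique up to compact perturbation of the polarization.

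\textbf{Linear gluing.} This is a family, linearized counterpart of the nonlinear gluing theorem used to build $\Psi$ in Appendix \ref{Appendix}. For $(u^-,u^+)\in N$ and $\rho$ large, the pre-glued domain $\Delta_{\rho}=D^-\#_{\rho}D^+$ contains a neck $[-\rho,\rho]\times[0,1]$ on which $u^-\#_{\rho}u^+$ is exponentially close to the constant map with value $a$, and the $\bar\partial$-operator $D^{\#}_{\rho}$ of $u^-\#_{\rho}u^+$ is, after conjugation by standard cutoff functions supported off the central circle of the neck, an exponentially small perturbation of $D^-_{u^-}\oplus D^+_{u^+}$ on the separated pieces. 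The classical cutoff-plus-Newton scheme then yields a bounded isomorphism between the finite-dimensional index data of $D^{\#}_{\rho}$ and of $D^-_{u^-}\oplus D^+_{u^+}$; running the construction fibrewise over $N$ and in the parameter $\rho$ gives the required stable isomorphism of index bundles.

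\textbf{Polarized refinement and main obstacle.} To upgrade from virtual index bundles to polarized Hilbert bundles one invokes cylindrical-end spectral theory: on a long neck, the positive/negative spectral decomposition of the formally self-adjointified operator is exponentially close to the corresponding decomposition of the translation-invariant model operator determined by the Reeb chord $a$, and precisely the same model appears as the asymptotic operator at the positive puncture of $u^-$ and the negative puncture of $u^+$ before gluing. Matching these asymptotic decompositions across the central circle of the neck identifies the polarization of $D^{\#}_{\rho}$ with the direct sum polarization of $D^-_{u^-}\oplus D^+_{u^+}$ up to a correction that becomes compact as $\rho\to\infty$; a homotopy $\rho\mapsto \rho'\to\infty$ through such identifications then furnishes the homotopy between $K_{0,0}\circ\Pre$ and the H-space product $K_{0,1}\cdot K_{1,0}$. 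The substantive analytic point, and the main technical hurdle, is this polarized refinement: one needs uniform family control of spectral projections along the neck, which follows from the fact that the gluing perturbation is small in operator norm compared with the spectral gap of the asymptotic Reeb-chord operator, uniformly on compact subsets of configuration space.
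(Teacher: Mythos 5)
Your middle paragraph is, in substance, the paper's own proof: over a compact subset of $\XX_{0,1}\times\XX_{1,0}$ and for $\rho$ large, a linear gluing isomorphism identifies $\Pre^{\ast}$ of the index bundle on $\XX_{0,0}$ with the direct sum of the index bundles of the two factors, and this stable isomorphism over compacta is what yields the homotopy of the classifying maps \eqref{Eq:Classifyindex}. The one implementation point you should make explicit is stabilization: the paper first adds trivial summands $\R^{m_1}$ and $\R^{m}$ over the compact subset so that the linearized operators become everywhere surjective; only then are the ``finite-dimensional index data'' honest bundles (namely kernel bundles), and the gluing isomorphism is simply the cut-off splicing map \eqref{Eqn:linearglue} furnished by the argument of Lemma \ref{Lem:partialglu1}. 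Without that step, kernels and cokernels jump in the family and your cutoff-plus-Newton scheme has no bundles between which to be an isomorphism; with it, your second paragraph coincides with the paper's argument.

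The ``polarized refinement'' that you present as the main technical hurdle is both unnecessary and, as justified, not right. It is unnecessary because in the framework of Section \ref{Sec:tangentbdles} the map \eqref{Eq:Classifyindex} is taken to classify the index bundle over configuration space, so the stable isomorphism of index bundles over compact subsets already gives the asserted homotopy $K_{0,0}\circ \Pre\simeq K_{0,1}\times K_{1,0}$. It is not right as stated because the glued operator acquires exponentially small spectrum near zero coming from the spliced approximate kernels, so there is no uniform spectral gap controlling its positive/negative spectral projections; and, more importantly, matching polarizations ``up to a correction that becomes compact as $\rho\to\infty$'' proves nothing here: finite-rank modifications of a polarization are compact, yet they change the component in the restricted Grassmannian and hence the homotopy class of the classifying map. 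That finite-rank, near-zero part of the spectrum is precisely where the index information lives, and it cannot be absorbed by a gap or compactness argument; it has to be tracked explicitly, which is exactly what the kernel-splicing isomorphism (after stabilization) does, with uniform invertibility holding only on the $L^{2}$-complement of the spliced kernels as in the proof of Lemma \ref{Lem:partialglu1}. So the correct proof is the one already contained in your second paragraph, suitably stabilized, and the third paragraph should be dropped or reduced to the remark that the identification is canonical up to homotopy because it is induced by the cut-off maps, which vary continuously with $(u^-,u^+,\rho)$.
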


\begin{proof} 
This follows from the linear gluing argument of Lemma \ref{Lem:partialglu1}.  Indeed, over compact subsets of configuration space, we can add 
 trivial bundles $\R^{m_1}$ and $\R^{m}$ to the domains of $\bar\pa$-operators in $\XX_{0,1}$ and $\XX_{1,0}$, stabilising them so they become everywhere surjective.  At a surjective operator, the fiber of the index bundle is given by the kernel. For $\rho$ large enough, Lemma \ref{Lem:partialglu1} provides an isomorphism
\begin{equation} \label{Eqn:linearglue}
G_{\rho}\colon\ker(D\bar\pa\oplus\R^{m_1})_{u^-}\oplus\ker(D\bar\pa\oplus\R^{m})_{u^+}
\to
\ker(D\bar\pa\oplus\R^{m_1}\oplus\R^{m})_{u^-\#_{\rho} u^+}
\end{equation}
which takes vector fields in the kernels of the operators $u^{\pm}$  to cut-off versions supported in the respective halves of the pre-glued domain.  The result follows.
\end{proof}

Lemma \ref{Lem:GlueIndex} and Theorem \ref{Thm:C1boundary} imply that the tangent bundle of the compactified manifold $\overline{\FF}_0$ is given, in $KO$-theory, by a class restricted from the configuration space $\XX_{0,0}$.


\section{Constructing a parallelizable bounding manifold}\label{Sec:parallel}
In this section we show that if the stable Gauss map $Gf$ of $f:\Sigma \rightarrow \bC^n$ is homotopic to that of the Whitney immersion, then $\Sigma \in bP_{n+1}$, i.e.~$\Sigma$ bounds a parallelizable $(n+1)$-manifold. This excludes numerous diffeomorphism types by results of \cite{KM}. We obtain the bounding manifold for $\Sigma$ by attaching a cap to the outer boundary $\FF_{0}^{\bd}=\pa\overline{\FF}_{0}-\Sigma\approx\FF_{1}\times\MM$. We do this in the simplest possible way: let
\[
\scrB = \overline{\FF}_0 \cup_{\FF^{\bd}_0} (\DD\times\MM)
\]
where $\DD = \sqcup D^2$ is a finite collection of closed disks abstractly bounding the  1-manifold $\FF_1$.

\begin{Proposition}
The space $\scrB$ admits the structure of a compact $C^1$-smooth manifold, with boundary canonically diffeomorphic to the homotopy sphere $\Sigma$.
\end{Proposition}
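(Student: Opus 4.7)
The plan is to verify that each piece of $\scrB$ is a compact $C^1$-manifold with the appropriate boundary, to identify their common boundary component, and then to perform a standard collar-theorem gluing of $C^1$-manifolds. The only nontrivial ingredient is confirming that $\FF_1$ is a finite disjoint union of circles, so that the abstract disk-filling $\DD$ with $\pa\DD \cong \FF_1$ is well-defined.

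First I would check that $\FF_1$ is a closed $1$-manifold. Lemma \ref{Lem:actualdimensions} gives the $C^1$ structure and dimension. No sequence in $\FF_1$ escapes with $r \to \infty$ by the displacement hypothesis on $H$. The $r = 0$ stratum is empty generically, since it would consist of honest $J$-holomorphic disks with one negative puncture lying in $\MM_{0,1}$, whose formal dimension $(n-3) - n = -3$ is negative by Lemma \ref{Lem:formaldim}. Finally, Gromov--Floer breaking would produce strata of the form $\FF_{1+k} \times \MM^{k}$ with $k \ge 1$, but $\dim \FF_{1+k} = 1 - kn < 0$ for $n \ge 3$ and any $k \ge 1$, so these are empty too. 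Hence $\FF_1$ is compact without boundary, i.e.~a finite disjoint union of circles, and $\DD$ may be chosen as a corresponding disjoint union of $C^\infty$ closed disks.

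With this in hand the remaining steps are routine. By Lemma \ref{Lem:holdisks1puncture}, $\MM$ is a closed $C^1$-manifold of dimension $n-1$, so $\DD \times \MM$ is a compact $C^1$-manifold of dimension $n+1$ with boundary $\pa\DD \times \MM \cong \FF_1 \times \MM$. Theorem \ref{Thm:C1boundary} identifies $\overline{\FF}_0$ as a compact $C^1$-manifold with boundary $\Sigma \sqcup (\FF_1 \times \MM)$, the $\Sigma$-summand being the space of constant maps from Corollary \ref{Cor:Constants}. Since collar neighborhoods exist in the $C^1$ category, attaching $\DD \times \MM$ to $\overline{\FF}_0$ along the common boundary component $\FF_1 \times \MM$ via the identity yields a compact $C^1$-manifold $\scrB$, whose boundary is the remaining $\Sigma$-summand, canonically identified with the homotopy sphere $\Sigma$ via the constant-map parametrization. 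The principal obstacle is the empty-boundary verification for $\FF_1$; everything else reduces to already-established moduli results and the classical $C^1$ collar neighborhood theorem.
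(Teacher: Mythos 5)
Your proposal is correct and follows the same route as the paper, which deduces the Proposition directly from Theorem \ref{Thm:C1boundary} together with the already-established facts that $\FF_1$ and $\MM$ are compact (Lemmas \ref{lemma:weakmoduli} and \ref{Lem:holdisks1puncture}) and a standard $C^1$ collar gluing. Your extra verification that $\FF_1$ is closed (no $r=0$ or broken strata, no escape as $r\to\infty$) is a correct unpacking of what the paper packages into Lemma \ref{lemma:weakmoduli}.
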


\begin{proof}
This is an immediate consequence of Theorem \ref{Thm:C1boundary}.
\end{proof}

We claim that, under the homotopy assumption on the Gauss map, the manifold  $\scrB$ constructed above has trivial tangent bundle.  Since, for manifolds with boundary, triviality and stable triviality of the tangent bundle are equivalent,  it suffices to show that $T\scrB$ is trivial in the reduced $KO$-theory $\widetilde{KO}(\scrB)$.

\subsection{The Whitney immersion} \label{Sec:whitney}
To analyze $T\scrB$, following \cite{EkholmSmith} and Section \ref{Sec:tangentbdles}, we use index theory and aim to trivialize the relevant index bundles over configuration spaces of smooth maps of punctured disks. For these to be sufficiently explicit as to be amenable to study, we reduce to a model situation.

\begin{Definition}\label{def:Whitney}
Let $W\colon S^{n}\to\C^{n}$ denote the Whitney immersion,  
\[
W\colon \{(x,y) \in \bR^n \times \bR \, | \, |x|^2 + y^2 = 1 \} = S^n \to \bC^n \quad W(x,y)=(1+iy)x.
\]
\end{Definition}

This is a Lagrangian immersion of a sphere with a unique double point, for which the corresponding Reeb chord has Maslov index $n$. Consider now a homotopy $n$-sphere $\Sigma$ and a Lagrangian immersion $f\colon\Sigma\to \C^{n}$ with one double point $a$ of index $n$. Fix once and for all a stable framing of the tangent bundle of $\Sigma$, which exists by \cite{KM}, and fix a Hamiltonian isotopy of $\C^{n}$ that makes $f$ agree with the Whitney immersion in a neighborhood of the double point $a$.  

In this situation, any choice of stable framing $\eta$ of the tangent bundle $TS^n$ of the standard sphere induces a \emph{difference element} $\delta(\eta) \in \pi_n U$, obtained as follows.   Since $f$ and $W$ have the same Maslov index, the maps may be homotoped (relative to a neighborhood of the double point) so as to co-incide in a neighborhood of a path connecting the double point preimages.  The complement of such a path is an $n$-disk in either $S^n$ or $\Sigma$. Since the tangent bundles are framed, the Gauss maps $df$ and $dW$ are naturally valued in $U(n)$, and co-incide on the boundaries of these disks. They therefore patch to define a map $\delta: S^n \rightarrow U(n)$, well-defined up to homotopy. The difference element $\delta(\eta)$ is the corresponding stable map $S^n \rightarrow U$.

\begin{Lemma}\label{lemma:deform1}
There exists a $\mathrm{PL}$-isotopy $\Phi_t\colon\Sigma\to\C^{n}$ fixed around $f(a)$ such that $\Phi_0=\id$ and $\Phi_1\circ f$ is a parameterization of the Whitney immersion. Furthermore, if the stable Gauss map $Gf$ is homotopic to the stable Gauss map $GW$ of the Whitney immersion, there is a stable framing $\eta$ of $TS^n$ so that the induced difference element $\delta(\eta) = 0$. 
\end{Lemma}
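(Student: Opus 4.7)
The plan has two independent parts.

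For the PL-isotopy existence, the plan is to leverage the preliminary Hamiltonian isotopy (built into the setup), which already arranges $f$ to coincide with $W$ on a small ball $U$ around the double point $0$. Outside $U$, both $f$ and $W$ restrict to PL-embeddings of compact codimension-$n$ manifolds-with-boundary whose abstract domains are PL-homeomorphic: $\Sigma\setminus f^{-1}(U)$ with $S^n\setminus W^{-1}(U)$, using the PL Poincar\'e conjecture to identify $\Sigma$ and $S^n$ (with dimension $n=4$ handled topologically via Freedman). On the common boundary $\partial U$ the two embeddings already agree. Since the embedding codimension is $n \geq 3$, one can then invoke relative codimension-$3$ PL unknotting to build an ambient PL-isotopy $\Phi_t$ of $\bC^n$, fixed on $U$, carrying $f$ to a PL-reparametrization of $W$.

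For the vanishing of $\delta(\eta)$, the plan is to invoke the long exact sequence of the fibration $O\to U\to U/O$:
\[
\pi_n(O)\xrightarrow{\iota_*}\pi_n(U)\xrightarrow{q_*}\pi_n(U/O).
\]
The difference element $\delta(\eta)\in\pi_n(U)$ is built by patching the $U(n)$-valued Gauss maps $df$ and $dW$ across the disk complementary to the path where they coincide. Under $q_*$ this patching in $U/O$ gives exactly the obstruction to a homotopy $Gf \simeq GW$ of stable Gauss maps; by hypothesis this obstruction vanishes, so $\delta(\eta)\in\operatorname{im}(\iota_*)$. The action of stably reframing $\eta$ by $g\in\pi_n(O)$ modifies $\delta(\eta)$ precisely by $\iota_*(g)$, since a change of real framing enters the Gauss map through the inclusion $O(n)\hookrightarrow U(n)$. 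Choosing $g$ with $\iota_*(g)=-\delta(\eta)$ then yields the required stable framing $\eta'$ satisfying $\delta(\eta')=0$.

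The main obstacle is part 1, the PL unknotting: carefully verifying that the codimension-$3$ classical tools apply with the prescribed boundary constraint and with uniform treatment across all dimensions $n>2$, particularly handling the low-dimensional case $n=4$ where one must substitute topological methods for PL ones. Part 2 is essentially algebraic and follows immediately once the patching construction of $\delta(\eta)$ is recognized as the transgression detecting lifts from $U/O$ to $U$.
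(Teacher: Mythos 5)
Your proposal is correct and takes essentially the same route as the paper: for the first statement the paper simply cites the Hudson--Zeeman PL isotopy extension theorem (compare \cite[Lemma 3.13]{EkholmSmith}), which rests on exactly the codimension-$\geq 3$ PL unknotting you invoke, and for the second statement the paper gives the same exactness argument for $\pi_n(O)\to\pi_n(U)\to\pi_n(U/O)$, phrased as precomposing $\delta$ with a map into $O$. The one caveat, which affects the paper's citation equally and not just your write-up, is that for $n=4$ a topological identification $\Sigma\cong S^4$ via Freedman does not by itself supply the PL identification the unknotting machinery needs; only the $C^0$ conclusions actually used downstream (Corollary \ref{corollary:deform1}) are unaffected.
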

  
\begin{proof}
The first statement follows from a PL-isotopy extension theorem due to Hudson and Zeeman \cite{HudsonZeeman}, compare to \cite[Lemma 3.13]{EkholmSmith}.  For the second statement, note that changing the stable framing $\eta$ on the standard sphere corresponds to precomposing the map $\delta$ with a map into $O$. This is sufficient to kill  the stable obstruction $\delta(\eta)$ precisely when the stable Gauss maps into $U/O$ are homotopic.
\end{proof}

\begin{Corollary}\label{corollary:deform1}
If the stable Gauss maps of $f$ and $W$ are homotopic, there is a one-parameter family of $C^0$-homeomorphisms $\phi_t\colon \bC^n \rightarrow \bC^n$ and a 1-parameter family of continuous maps $\psi_t: S^n \rightarrow U$ with the properties that
\begin{enumerate}
\item $\phi_0$ is the identity;
\item $\phi_1(f(\Sigma)) = W(S^n)$;
\item $\psi_0 = df$ and $\psi_1 = dW \circ \phi_1$.
\end{enumerate}
\end{Corollary}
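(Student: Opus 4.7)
My plan is to extract the family $\phi_t$ from the PL-isotopy of Lemma~\ref{lemma:deform1} and the family $\psi_t$ from the vanishing of the difference element also supplied by that lemma. I would first set $\phi_t:=\Phi_t$, giving a family of $C^0$-homeomorphisms of $\bC^n$ with $\phi_0=\mathrm{id}$ and $\phi_1(f(\Sigma))=W(S^n)$, which already gives properties (1) and (2). The factorization $\phi_1\circ f = W\circ h$ then determines a PL-homeomorphism $h\colon\Sigma\to S^n$, which I intend to use to transport Gauss-map data onto the round sphere.

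For $\psi_t$, I would interpret the Gauss maps in (3) as maps with domain $S^n$ via $h$: $df$ will mean $df\circ h^{-1}\colon S^n\to U(n)/O(n)$, and $dW\circ\phi_1$ will mean the usual Gauss map $dW\colon S^n\to U(n)/O(n)$, with $\phi_1$ only serving to identify $W(S^n)$ with $\phi_1(f(\Sigma))$. The next step is to stably lift these two Gauss maps using the framings: the stable framing of $T\Sigma$ fixed in Section~\ref{Sec:whitney}, transported along $h$, lifts $df\circ h^{-1}$ to a map $S^n\to U$, while the framing $\eta$ produced by Lemma~\ref{lemma:deform1} lifts $dW$. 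By the construction of the difference element reviewed in Section~\ref{Sec:whitney}, the stable difference of these two lifts in $\pi_n(U)$ is precisely $\delta(\eta)$. Since the hypothesis $Gf\simeq GW$ ensures, via Lemma~\ref{lemma:deform1}, that $\eta$ may be chosen with $\delta(\eta)=0$, the two lifts become homotopic in $U$; I would take $\psi_t$ to be any such homotopy.

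The one point requiring care, and the main obstacle I anticipate, is that $\phi_1$ and $h$ are only PL, not smooth, so pointwise derivatives of $\phi_1$ or $h$ are not globally defined and one cannot directly form the literal ``composition'' $dW\circ\phi_1$. However, the stable Gauss maps are defined intrinsically on the smooth manifolds $\Sigma$ and $S^n$, and their transports along $C^0$-homeomorphisms produce well-defined homotopy classes in $[S^n,U/O]$ and, once framings are chosen, in $[S^n,U]$. Since everything in the construction of $\psi_t$ takes place in the stable homotopy category, no analytic issue arises and the argument reduces to invoking Lemma~\ref{lemma:deform1} for both ingredients together with a bookkeeping check that the difference element indeed measures the stable discrepancy between the two framed Gauss maps.
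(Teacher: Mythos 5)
Your proposal is correct and is the natural deduction from Lemma~\ref{lemma:deform1}; the paper in fact offers no explicit proof of this Corollary, simply stating it and moving immediately to the consequence for index bundles. Your two-step construction---taking $\phi_t$ to be the ambient PL isotopy of $\bC^n$ supplied by the Hudson--Zeeman extension theorem (which is how the Lemma's $\Phi_t$ is produced), and then constructing $\psi_t$ as a null-homotopy of the difference element $\delta(\eta)$---is exactly what the Corollary is asserting. The one caveat you raise, that $\phi_1$ and the induced PL homeomorphism $h\colon\Sigma\to S^n$ are not smooth so that an expression like ``$dW\circ\phi_1$'' cannot be read as a literal pointwise composition of derivatives, is genuine; the Corollary's statement glosses over this, and your resolution---that $\psi_t$ is only ever used through its homotopy class, which is well-defined after transport along a $C^0$-homeomorphism---is the right one and is precisely how the Corollary is invoked afterwards (in the proof of Corollary~\ref{Cor:stableindextrivial}, where only the homotopy class of the classifying map \eqref{Eq:Classifyindex} matters). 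Your bookkeeping observation that the discrepancy between the two framed Gauss maps is measured exactly by $\delta(\eta)\in\pi_n(U)$ is the content of the difference-element construction in Section~\ref{Sec:whitney}, so the argument closes as you intend.
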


We combine the previous Corollary with the discussion of Section \ref{Sec:tangentbdles}.  By conjugating with the formal isotopy of Lagrangian planes produced by the final part of Corollary \ref{corollary:deform1}, the maps \eqref{Eq:Classifyindex} associated to the immersion $f$ of $\Sigma$ and the Whitney immersion are homotopic.  This proves:

\begin{Corollary} \label{Cor:stableindextrivial} The index bundle on the configuration space $\XX_{p,q}$  is stably trivial if and only if the index bundle on the corresponding configuration space for the Whitney immersion is stably trivial.\qed
\end{Corollary}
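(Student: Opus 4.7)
The plan is to deduce the corollary directly from Corollary \ref{corollary:deform1} by showing that the classifying map \eqref{Eq:Classifyindex} of the index bundle depends, up to homotopy, only on the homotopy class of the Lagrangian Gauss map, and then invoking the $1$-parameter family $\psi_t$ interpolating between $df$ and $dW \circ \phi_1$.

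More concretely, first I would use the $C^0$-isotopy $\phi_t \colon \bC^n \to \bC^n$ to identify the configuration spaces $\XX_{p,q}^{f}$ and $\XX_{p,q}^{W}$. Post-composition by $\phi_1$ carries a map $u \colon (D_{p,q},\pa D_{p,q})\to(\bC^n,f(\Sigma))$ to a map with boundary on $W(S^n)$. Although $\phi_1$ is only continuous, it induces a well-defined homotopy equivalence on the underlying spaces of continuous disk maps. Since the classifying map factors through $BGL_{res}\simeq U/O$, which is a topological space, only the homotopy type of this induced map matters; in particular we may replace $\phi_1$ by any sufficiently close smooth approximation without changing the homotopy class of the induced map on configuration spaces.

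Second, I would observe that the classifying map $K_{p,q}^{L}$ sends $u$ to the class in $U/O$ determined by the loop of Lagrangian tangent planes along $u|_{\pa D_{p,q}}$, that is, the pullback of the Gauss map of $L$ by (the lift of) the boundary trace of $u$. For $L = f(\Sigma)$ this involves $df$, while $K_{p,q}^{W}\circ \phi_1^{\ast}$ involves $dW \circ \phi_1$. The family $\psi_t$ of Corollary \ref{corollary:deform1} provides a continuous family of Lagrangian Gauss data interpolating between these two; pulling back by $u|_{\pa D_{p,q}}$ gives a continuous homotopy of classifying maps $K_{p,q}^{f}\simeq K_{p,q}^{W}\circ \phi_1^{\ast}$. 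Stable triviality of the index bundle on each configuration space is equivalent to nullhomotopy of the corresponding classifying map into $U/O$, and is therefore preserved.

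The only delicate point is the regularity mismatch, since neither the homeomorphism $\phi_1$ nor the family $\psi_t$ is smooth. However, the comparison we need is purely homotopical: we wish to compare the two stable isomorphism classes of index bundles, which depend only on the homotopy class of their classifying maps into the topological space $U/O$. Thus this obstacle is essentially cosmetic and is dealt with by a standard smooth approximation argument, which does not affect any homotopical conclusion. \qed
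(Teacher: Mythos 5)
Your proposal is correct and follows essentially the same route as the paper: the paper also invokes Corollary \ref{corollary:deform1} and argues that conjugating by the formal isotopy of Lagrangian planes $\psi_t$ (together with the identification of configuration spaces furnished by $\phi_1$, which only matters up to homotopy) makes the classifying maps \eqref{Eq:Classifyindex} for $f$ and for the Whitney immersion homotopic, whence the index bundles have the same stable triviality status. Your additional remarks on regularity and on reading off the classifying map from the boundary Gauss data are just an expanded version of the paper's terse argument.
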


\subsection{Index bundles and configuration spaces}

We restrict attention to the configuration spaces relevant to our study and write $\XX_{0}=\XX_{0,0}$, $\XX_{1}=\XX_{0,1}$, and $\XX=\XX_{1,0}$ for the configuration spaces that contain $\FF_{0}$, $\FF_{1}$, and $\MM$, respectively.  

For a compact connected cell complex $K$ and $x,y\in K$ let $\Omega(K;x,y)$ denote the space of paths in $K$ from $x$ to $y$, which is homotopy equivalent to the based loop space $\Omega(K)=\Omega(K;x,x)$ of $K$. Let $\LL(K)$ denote the free loop space of $K$. Recall that $\{a^{+},a^{-}\} \subset \Sigma$ are the preimages of the double point $a\in\C^{n}$ of the immersion $f$.

\begin{Lemma}\label{lemma:hconfig}
There are homotopy equivalences
\[
\XX_{1}\simeq S^{1}\times\Omega(\Sigma;a^{-},a^+); \quad
\XX\simeq \Omega(\Sigma;a^{+},a^{-});\quad
\XX_{0}\simeq \LL(\Sigma),
\]
In particular, the homotopy types of the configuration spaces are independent of the smooth structure on the homotopy sphere $\Sigma$.  
\end{Lemma}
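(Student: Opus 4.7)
The strategy is to realize each configuration space as the total space of a Hurewicz fibration over a space of boundary data in $\Sigma$, with contractible fibers supplied by Sobolev extensions into the convex target $\bC^n$. Any $(u,h)\in \XX_{\mathbf{z}}$ has a continuous boundary lift to $\tilde f(\Sigma)\cong \Sigma$ by definition, and the asymptotic conventions at each puncture pin down the endpoint limits ($a^+$ or $a^-$) at each end of each boundary arc. Projection onto the lifted boundary therefore defines a restriction map, whose fiber over a boundary datum $\gamma$ consists of $\sblv^{2}$-extensions of $f\circ\gamma$ to $D_{\mathbf{z}}\to \bC^n$ satisfying the closed linear constraint $du^{0,1}|_{\partial D}=0$. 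Since $\bC^n$ is convex, straight-line interpolation contracts this fiber onto any chosen reference extension; it is an affine subspace of a Sobolev space, hence contractible. Standard Sobolev trace and extension theory promotes the restriction to a Hurewicz fibration, and the long exact sequence of homotopy groups then identifies $\XX_{p,q}$ with the relevant space of boundary data.

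\textbf{Case analysis.} For $\XX_{0}=\XX_{0,0}$ the (unpunctured) boundary is a circle mapping freely into $\Sigma$, giving $\XX_{0}\simeq \LL(\Sigma)$. For $\XX_{1}=\XX_{0,1}$, the unique negative puncture varies over $Z_{0,1}=\partial D\cong S^{1}$; rotation identifies fibers canonically, and the boundary lift runs from $a^{-}$ to $a^{+}$ by the asymptotic convention for negative punctures, so $\XX_{1}\simeq S^{1}\times \Omega(\Sigma;a^{-},a^{+})$. For $\XX=\XX_{1,0}$ the same analysis produces paths from $a^{+}$ to $a^{-}$; since $\XX$ is the configuration space containing the holomorphic moduli space $\MM$, one fixes the positive puncture at $1\in \partial D$ and quotients by the reparametrization group $G_{1}$ used to define $\MM$. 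As $G_{1}\subset PSL_{2}(\R)$ is a $2$-dimensional solvable group diffeomorphic to $\R^{2}$, and therefore contractible, the quotient is homotopy equivalent to $\Omega(\Sigma;a^{+},a^{-})$. The final assertion is immediate: free loop and based path spaces are homotopy functors, and any orientable homotopy $n$-sphere is homotopy equivalent to $S^{n}$, so each of the three homotopy types depends only on $\Sigma$ as a topological manifold.

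\textbf{Main obstacle.} The chief technical subtlety is confirming that the boundary-restriction map is a genuine Hurewicz fibration in the Sobolev category, and that the auxiliary constraint $du^{0,1}|_{\partial D}=0$ preserves the affine (and hence contractible) structure on the fibers. Both are routine consequences of standard trace and harmonic extension results for $\bC^{n}$-valued Sobolev maps on the disk once carefully stated, but they must be set up properly before the fibration argument goes through. The passage to the $G_{1}$-quotient in the $\XX_{1,0}$ case is a harmless cosmetic step given the contractibility of $G_{1}$.
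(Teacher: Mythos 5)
Your overall strategy matches the paper's one-line proof exactly: in each case the restriction to the lifted boundary has contractible (homotopy) fiber because $\bC^n$ is convex, so straight-line interpolation retracts the fiber onto a reference extension, and the homotopy type is read off from the space of boundary data. The conclusion for $\XX_0$ and $\XX_1$, and the homotopy-invariance remark, are handled correctly and just as in the paper.

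The one flaw is in your treatment of $\XX = \XX_{1,0}$. You write that ``one fixes the positive puncture at $1\in\partial D$ and quotients by the reparametrization group $G_1$ used to define $\MM$,'' and then invoke contractibility of $G_1$. But $\XX$ is not a quotient: it is the ambient Banach manifold of Sobolev maps with the puncture held at $1\in\partial D$ (the fixed-puncture convention coming from the gauge-fixing used to embed $\MM$ in a Banach space, cf.\ Section \ref{ssec:1+gauge}, via the marked-point constraints defining $\XX_{1,0}'$). The moduli space $\MM$ is cut out inside $\XX_{1,0}'\subset\XX$ by $\bar\pa$ together with the marked-point gauge conditions; it is precisely set up so that no quotient by $G_1$ is ever taken on a Banach manifold, because $G_1$ distorts the Sobolev norm arbitrarily badly and the quotient would not inherit a manifold structure. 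Thus the $G_1$ step is not merely ``cosmetic''; it identifies $\XX$ with the wrong object, and if taken seriously (i.e.\ a quotient of a Banach manifold by a non-free, non-proper $G_1$-action) it would be meaningless. You are rescued only because the intended answer $\Omega(\Sigma;a^+,a^-)$ already falls out from the fibration argument once the puncture is fixed, so the spurious quotient does not change the stated homotopy type. Delete that step and the proof is correct and coincides with the paper's.

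One small additional caveat: the paper deliberately says ``homotopy fiber'' rather than asserting that boundary restriction is a Hurewicz fibration in the Sobolev category. Your stronger claim is plausible but would need the trace/extension argument you allude to actually carried out; the linear-homotopy contraction of the homotopy fiber suffices and is cleaner, so you may as well phrase it that way.
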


\begin{proof}
In all cases, the homotopy fiber of the restriction to the boundary is contractible via linear homotopies. Note that the $S^{1}$-factor encodes the location of the boundary puncture of maps in $\XX_{1}$. The final statement follows since the homotopy type of the based and free loop spaces is a homotopy invariant of compact cell complexes.
\end{proof}

 Let $Q \rightarrow U\Sigma$ denote the vertical tangent bundle to the unit sphere bundle $U\Sigma \subset T\Sigma$, and $\widehat{Q}$ the fiberwise Thom space of $Q$, so the fiber $\widehat{Q}_{\sigma}$ of $\widehat{Q}$ over $\sigma\in\Sigma$ is the Thom space of the tangent bundle $T(U_\sigma\Sigma)$ of the fiber of the unit sphere bundle at $\sigma$.  Note that the Thom space contains a distinguished point $\infty$.  Let
\[
\Sigma\vee \widehat{Q}_{\sigma} =\Sigma \cup \widehat{Q}_{\sigma}/\sigma\sim\infty.
\]

\begin{Lemma}\label{Lem:Skeleta}
There are embeddings
\[
\iota\colon S^{n-1} \to \Omega(\Sigma;a^{\pm},a^{\mp}) \qquad \text{and} \qquad \kappa\colon \Sigma\vee\widehat{Q}_{\sigma}\ \to \LL(\Sigma)
\]
which give a $2n-3$ respectively $3n-4$ skeleton for the given spaces. In other words $\iota$ is $(2n-3)$-connected and $\kappa$ is $(3n-4)$-connected.  
\end{Lemma}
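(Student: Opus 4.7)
I would pick a reference path $\gamma_0$ in $\Sigma$ from $a^-$ to $a^+$ and use composition with $\gamma_0$ to identify $\Omega(\Sigma; a^+, a^-) \simeq \Omega_{a^+}\Sigma$. Since $\Sigma$ is a homotopy $n$-sphere, by James's theorem $\Omega\Sigma \simeq \Omega S^n \simeq J(S^{n-1})$, and the Freudenthal suspension $E\colon S^{n-1} \to \Omega\Sigma S^{n-1}$ realises the inclusion of the bottom James filtration $J_1(S^{n-1}) \hookrightarrow J(S^{n-1})$. Explicitly, one would parametrise the sphere's worth of paths by the longitudes of a smoothly embedded $S^n$-chart containing $a^\pm$, each concatenated with $\gamma_0$. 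Freudenthal's theorem then gives that $\iota$ is $(2n-3)$-connected, since $S^{n-1}$ is $(n-2)$-connected.

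\textbf{Map $\kappa$.} I would define $\kappa$ piecewise on the wedge. On $\Sigma$, send $\sigma' \mapsto c_{\sigma'}$, the constant loop at $\sigma'$; this is the canonical section of the evaluation fibration $\ev_0\colon\LL\Sigma\to\Sigma$. On $\widehat Q_\sigma$, I would invoke the classical homotopy equivalence $\widehat Q_\sigma = \mathrm{Th}(T(U_\sigma\Sigma)) = \mathrm{Th}(TS^{n-1}) \simeq J_2(S^{n-1})$: both are two-cell complexes $S^{n-1} \cup_\phi D^{2n-2}$ whose attaching map $\phi$ is the Whitehead square of the identity generator of $\pi_{n-1}(S^{n-1})$ (for the Thom space, this follows from expressing the top-cell attaching map via the Euler class of $TS^{n-1}$). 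Then compose with $J_2(S^{n-1}) \hookrightarrow J(S^{n-1}) \simeq \Omega_\sigma\Sigma \hookrightarrow \LL\Sigma$, the last arrow being the fibre inclusion of $\ev_0$ over $\sigma$. The basepoint identifications are compatible: $\sigma \in \Sigma$ and $\infty \in \widehat Q_\sigma$ both correspond to the constant loop $c_\sigma$.

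\textbf{Connectivity and the main obstacle.} Freudenthal gives the $(2n-3)$-connectivity of $\iota$ directly. For $\kappa$, the essential input is that $J_2 \hookrightarrow J$ is $(3n-4)$-connected, which follows from the James splitting $J_k/J_{k-1} \simeq (S^{n-1})^{\wedge k}$: the first cofibre $J_3/J_2 \simeq S^{3(n-1)}$ yields cells only in dimensions $\geq 3n-3$. Combined with the section of $\ev_0$ (which splits the homotopy long exact sequence of the fibration $\Omega\Sigma \to \LL\Sigma \to \Sigma$ as $\pi_*(\LL\Sigma) \cong \pi_*(\Sigma) \oplus \pi_*(\Omega\Sigma)$) and a Serre spectral-sequence argument, this should yield the desired $(3n-4)$-connectivity of $\kappa$. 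The main obstacle is the interplay between the wedge structure of the source and the ``diagonal'' cells of $\LL\Sigma$: the free loop space contains cells in dimensions $2n-1, 3n-2, \ldots$ arising from pairings of base and fibre cells, and one must verify that these are captured by Whitehead products in $\pi_*(\Sigma \vee \widehat Q_\sigma)$ in a compatible way, or are otherwise not obstructive to the claimed connectivity.
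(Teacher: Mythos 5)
Your treatment of $\iota$ is essentially correct, though it follows a different route from the paper: the paper produces $\iota$ by Morse--Bott theory on the path space (the Bott manifold of minimal geodesics joining antipodal points), while you use $\Omega(\Sigma;a^{-},a^{+})\simeq\Omega\Sigma\simeq J(S^{n-1})$ and Freudenthal. Two caveats. First, ``longitudes of a smoothly embedded $S^n$-chart'' is not meaningful inside a homotopy sphere, and if your $S^{n-1}$-family of paths were contained in a contractible chart the adjoint map $S^{n}\to\Sigma$ would have degree $0$ and $\iota$ would be nullhomotopic; what you must arrange (and all you need) is that the family represents a generator of $\pi_{n-1}(\Omega(\Sigma;a^{-},a^{+}))\cong\pi_{n}(\Sigma)\cong\Z$, since any such map is homotopic to the bottom James cell and hence $(2n-3)$-connected. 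Second, your identification $\widehat{Q}_{\sigma}=\mathrm{Th}(TS^{n-1})\simeq J_{2}(S^{n-1})$ is true, but it does not follow from the Euler class alone; one needs the classical fact that the top-cell attaching map of the Thom space is $J(\tau_{n-1})=\pm[\iota_{n-1},\iota_{n-1}]$.

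The genuine gap is precisely the point you defer in your final paragraph, and for your map it cannot be closed. Since your $\kappa$ sends $\Sigma$ to constant loops and $\widehat{Q}_{\sigma}$ into the single fiber $\Omega_{\sigma}\Sigma$, on homology its image lies in the two edge pieces of the Serre spectral sequence of $\Omega\Sigma\to\LL\Sigma\to\Sigma$ and misses every mixed class $E_{\infty}^{n,q}$ with $q>0$. The first such class sits in degree $2n-1$: for $n$ odd the spectral sequence degenerates and $H_{2n-1}(\LL\Sigma;\Q)\cong\Q$ (the cycle carried by the critical manifold $U\Sigma$ of simple closed geodesics), while $H_{2n-1}(\Sigma\vee\widehat{Q}_{\sigma})=0$; since $2n-1\le 3n-4$ for $n\ge 3$, your $\kappa$ is not $(3n-4)$-connected. (For $n$ even the failure is already visible in degree $2n-2$: the top cell of $\widehat{Q}_{\sigma}$ carries a rational class, but $H_{2n-2}(\LL\Sigma;\Q)=0$.) So the section-splitting plus spectral-sequence argument you sketch exhibits the obstruction rather than removing it; there is also a source-side discrepancy, since by Hilton--Milnor $\pi_{2n-2}(\Sigma\vee\widehat{Q}_{\sigma})$ contains an infinite-order Whitehead product of the two wedge summands, whereas for $n$ odd $\pi_{2n-2}(\LL\Sigma)\cong\pi_{2n-2}(\Sigma)\oplus\pi_{2n-1}(\Sigma)$ is finite. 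What your construction does prove is that $\kappa$ is $(2n-2)$-connected: below that degree the wedge maps to the product $(2n-2)$-connectedly and the section and fiber inclusions realize the splitting of $\pi_{*}(\LL\Sigma)$.

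For comparison, the paper's $\kappa$ arises as the inclusion of a Morse--Bott subcomplex of $\LL\Sigma$, namely the closure of the unstable manifold over the simple closed geodesics (the fiberwise Thom space $\widehat{Q}$) with one fiber removed; but once one passes to the wedge $\Sigma\vee\widehat{Q}_{\sigma}$ the same homological obstruction applies to any map, so the $(3n-4)$ bound in the statement itself appears overstated for $n\ge 3$ --- only the full $\widehat{Q}$ is a skeleton through that range, while the wedge is a skeleton only through dimension about $2n-2$. This weaker, correct bound is all that the later arguments of the paper use (they only need the skeleton property through dimension roughly $n+1$), so your argument, restated with the $(2n-2)$ bound, does deliver what is needed; as a proof of the lemma as literally stated, however, the ``diagonal cells'' you flag are a genuine obstruction, not a technicality to be checked.
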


\begin{proof}
The skeleta arise from the usual Morse-Bott theory of the energy functional for the round metric on the standard sphere $S^n$.  For the path space $\Omega(\Sigma;a^{\pm},a^{\mp})$, take $a^{+}$ and $a^{-}$ to be antipodal points. Then the energy functional is Bott-degenerate with Bott-manifolds diffeomorphic to $S^{n-1}$ of indices $2k(n-1)$, $k=0,1,2,\dots$. It follows that the Bott-manifold of index $0$ gives a $(2n-3)$-skeleton. 

For the free loop space, consider the fibration $\LL(\Sigma)\to\Sigma$ with fiber the based loop space $\Omega(\Sigma)$. Again the energy functional is Bott-degenerate with critical manifold $\Sigma$ of index $0$ corresponding to constant loops, and critical manifolds $U\Sigma$ of indices $2k(n-1)$, $k=1,2,3,\dots$.
The closure of the unstable manifold of the index $2(n-1)$ critical manifold $U\Sigma$ gives an embedding of $\widehat{Q}$, see \cite[Lemma 7.1]{EkholmSmith}, which then gives a $(3n-3)$-skeleton. Removing the fiber over one point we get  a $(3n-4)$-skeleton homotopy equivalent to $\Sigma\vee\widehat{Q}_{\sigma}$.
\end{proof}

Combining Lemmas \ref{lemma:hconfig} and \ref{Lem:Skeleta} we get a corresponding $(2n-3)$-skeleton $S^{n-1}\approx K^{2n-3}\subset \XX$, a $(2n-2)$-skeleton $S^{1}\times S^{n-1}\approx K^{2n-2}_{1}\subset \XX_1$, and a $(3n-4)$-skeleton $\Sigma\vee\widehat{Q}_{\sigma}\approx K^{3n-4}_{0}\subset \XX_{0}$, consisting of disk maps with boundary paths in $\iota(S^{n-1})$, with boundary paths in $\iota(S^{n-1})$ and base point according to the $S^{1}$-factor, and with boundary loops in $\kappa(\widehat{Q})$, respectively. 

We write $I_{0}$, $I_{1}$, and $I$ to denote the index bundles over $\XX_{0}$, $\XX_{1}$, and $\XX$ respectively.

\begin{Lemma}\label{lemma:strivskel}
The index bundles $I_0$, $I$, and $I_1$ are stably trivial over the skeleta $K_{0}^{3n-4}$, $K^{2n-3}$, and $K^{2n-2}_1$, respectively.
\end{Lemma}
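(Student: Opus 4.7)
By Corollary \ref{Cor:stableindextrivial}, it suffices to prove the stated triviality of index bundles for the Whitney immersion $W\colon S^n \to \bC^n$, whose explicit $O(n)$-symmetry (by rotation of the $x$-variable) makes the relevant bundles directly computable. My plan is, in each of the three cases, to realize the skeleton as the base of an $O(n)$-equivariant family of holomorphic disks on $W(S^n)$, linearize the $\bar\partial$-operator along this family, and decompose the resulting index bundle via the symmetry into pieces that can be identified with (complexified) tangent bundles of spheres together with trivial summands. Stable triviality then follows from the stable parallelizability of spheres.

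For $K^{2n-3}\approx S^{n-1}\subset \XX$, the skeleton parametrizes once-positively-punctured disks whose boundaries trace meridians of $W(S^n)$ from $a^{+}$ to $a^{-}$. For each $v\in S^{n-1}$, the half of $W(S^n)$ lying in the complex line $\bC\cdot v$ provides an explicit once-punctured holomorphic disk with the correct asymptotics, giving an $O(n)$-equivariant family over $S^{n-1}\approx O(n)/O(n-1)$. The linearized $\bar\partial$-operator decomposes under the stabilizer $O(n-1)$ of $v$ into a rank-one tangential piece along $\bC\cdot v$ and a rank $(n-1)$ normal piece; both are standard Riemann--Hilbert problems whose index bundles assemble into $TS^{n-1}$ plus a trivial summand, and hence are stably trivial. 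The case $K^{2n-2}_{1}\approx S^1\times S^{n-1}\subset\XX_{1}$ follows immediately, since the $S^1$ factor records only the position of the boundary puncture, does not alter the underlying map, and so does not contribute to the index bundle; the claim reduces via pullback along the projection $S^1\times S^{n-1}\to S^{n-1}$ to the previous case.

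For $K^{3n-4}_{0}\approx \Sigma\vee\widehat{Q}_\sigma \subset \XX_{0}$, I handle the two pieces of the wedge separately. Over $\Sigma$ all maps are constant, and the index bundle agrees stably with $T\Sigma$, which is trivial since $\Sigma$ is stably parallelizable by \cite{KM}. Over $\widehat{Q}_\sigma$, the skeleton parametrizes disks bounding the great-circle loops that form the index-$2(n-1)$ Bott critical manifold of the energy functional on $\LL(S^n)$; using equatorial slices of $W(S^n)$ in complex $2$-planes as explicit fillings, a further $O(n-1)$-equivariant decomposition splits the index bundle into tangential and normal pieces which together form a stably trivial bundle on $\widehat{Q}_\sigma$. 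The main technical obstacle lies here: one must verify that the tangential-versus-normal splitting of the index bundle extends continuously across the basepoint of the Thom space, and matches the identification with $T\Sigma$ on the $\Sigma$-factor at that basepoint, so that the two stably trivial pieces assemble into a globally stably trivial bundle on the wedge $\Sigma\vee\widehat{Q}_\sigma$.
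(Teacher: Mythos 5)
Your overall strategy matches the paper's: reduce to the Whitney immersion via Corollary \ref{Cor:stableindextrivial} and split the linearized problem along the $S^{n-1}$ of geodesics into a $1$-dimensional tangential piece and an $(n-1)$-dimensional nearly-constant normal piece, identifying the result stably with $TS^{n-1}$ plus a trivial summand. That part of your argument is essentially the paper's. However, there is a genuine gap in your treatment of $I_0$ over $\widehat{Q}_{\sigma}$. By Lemma \ref{Lem:Skeleta}, $\widehat{Q}_{\sigma}$ is not the family of great-circle loops through $\sigma$: it is the Thom space of $T(U_\sigma\Sigma)$, realized as the \emph{closure of the unstable manifold} of the critical manifold of geodesic loops, so most points of $\widehat{Q}_{\sigma}$ correspond to non-geodesic loops produced by the downward Morse flow, compactified at the constant loop. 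Knowing the index bundle over the zero-section (the geodesic loops, which is all your ``equatorial slices in complex $2$-planes'' construction sees) does not determine its stable class over the Thom space; a bundle on $D(E)/S(E)$ is not recovered from its restriction to the zero-section. The paper's proof supplies exactly the missing ingredient: over the geodesic loops it uses sufficiently stretched domains $\Delta_{\rho}=D^-\#_{\rho}D^+$ to split the boundary condition into the two previously analyzed punctured-disk problems, and then it uses an explicit model of the Morse flow (citing \cite[Lemma 7.3]{EkholmSmith}) to identify $I_0|_{\widehat{Q}_{\sigma}}$ stably with the pull-back of $T(U_\sigma S^{n})$ over the whole Thom space, after which gluing to the trivialization over the constants at $\sigma$ is easy. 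The issue you flag as ``the main technical obstacle'' (matching at the basepoint) is not the main missing step; the computation of the bundle over the bulk of $\widehat{Q}_{\sigma}$ is.

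A secondary, smaller problem is your claim that $I_1$ over $K_1^{2n-2}$ ``reduces via pullback to the previous case.'' The $S^1$ factor is indeed immaterial, but the negative-puncture problem is a different Riemann--Hilbert problem from the positive-puncture one: in the paper's computation the tangential $1$-dimensional problem (the curve of Figure \ref{Fig:Bdry} with reversed orientation) has index $-1$ and the nearly-constant $(n-1)$-dimensional normal problem is an isomorphism, so $I_1$ is in fact trivial, not stably $TS^{n-1}\oplus\text{trivial}$ as a literal pullback of $I$ would give. Since $TS^{n-1}$ is stably trivial the conclusion of the lemma is not endangered, but the identification you assert is false as stated and the negative-puncture case needs its own (short) computation — and the sharper statement that the stabilized operator defining $I_1$ becomes an isomorphism is what the paper actually uses later, in Lemma \ref{lemma:coherent} and in its analysis over $\widehat{Q}_{\sigma}$.
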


\begin{proof}
By Corollary \ref{Cor:stableindextrivial}, it is sufficient to find stable trivializations of the index bundles for the Whitney immersion, and we can moreover assume that we work with the undeformed Cauchy-Riemann equation and the standard $\bar\pa$-operator.   We use a model of the Whitney sphere that is almost flat, i.e.~for some small $\epsilon > 0$, we scale $\R^{n}$ and $i\R^{n}$ in the model of Definition \ref{def:Whitney} by $\epsilon^{-1}$ and $\epsilon$, respectively.

Consider first $I\to K^{2n-3}$. The Lagrangian boundary condition corresponding to a geodesic  connecting $a^{+}$ to $a^{-}$ splits into a 1-dimensional problem, corresponding to the tangent line of the curve in Figure \ref{Fig:Bdry}, of index $2$ with 2-dimensional kernel, and an $(n-1)$-dimensional problem with almost constant boundary conditions and hence with $(n-1)$-dimensional kernel. It follows that the index bundle is stably equivalent to $T S^{n-1}$, in particular stably trivial as claimed.     
\begin{center}
\begin{figure}[ht]
\includegraphics[scale=0.5]{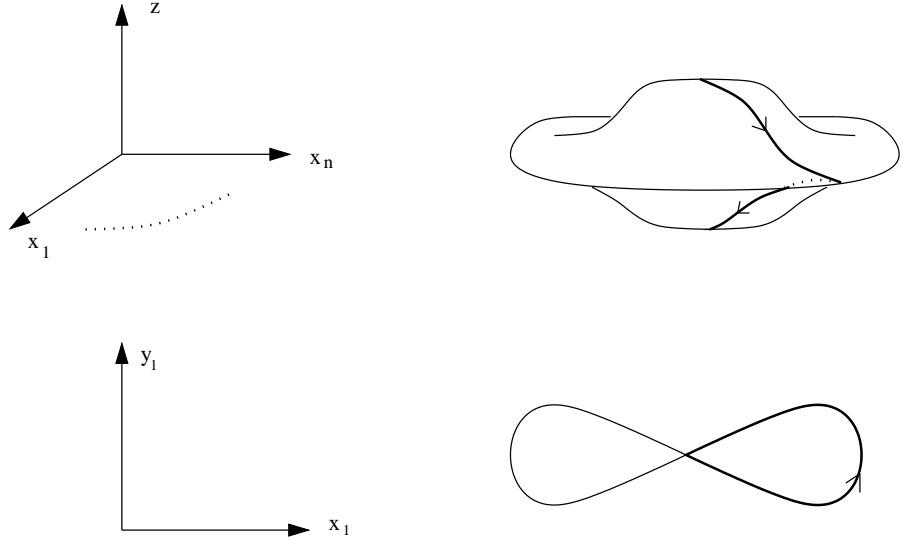}
\caption{The Lagrangian boundary condition along a disk with a positive puncture\label{Fig:Bdry}}
\end{figure}
\end{center}

For $I_{1}\to K_{1}^{2n-2}$ the boundary condition  splits in the same way, except now the $1$-dimensional problem corresponds to the curve in Figure \ref{Fig:Bdry} oriented in the opposite direction, since the puncture is negative. The index for that problem now equals $-1$, whilst the $\bar\pa$-operator on the almost constant $(n-1)$-dimensional orthogonal boundary condition  is an isomorphism. Thus, the index bundle is trivial.

Finally, consider $I_{0}\to K_{0}^{3n-4}$.  Restricted to constant loops in $S^n$, the index bundle is isomorphic to the tangent bundle, hence is stably trivial. Consider now $\widehat{Q}_{\sigma}$.  Taking $\sigma=a^{+}$ we find that the geodesic loops are unions of the paths considered above. Using sufficiently stretched disks of the form $\Delta_{\rho} = D^-\#_{\rho} D^+$, cf. the discussion after Theorem \ref{Thm:C1boundary} and Section \ref{ssec:glueddomains},  the boundary condition over $U_{\sigma}S^{n}$ can be considered as a sum of the two problems studied above. Stabilizing the operator defining $I_{1}$ once, the corresponding operator becomes an isomorphism. By considering an explicit model of the Morse flow, compare to the proof of \cite[Lemma 7.3]{EkholmSmith},  one sees that 
the restriction of $I_0$ to $\widehat{Q}_{\sigma}=MT(U_{\sigma}S^{n})$ is stably isomorphic to the pull-back of the tangent bundle $T(U_{\sigma}S^{n})$, hence is  stably trivial. The trivialization can clearly be glued to that  over $S^n$ at $\sigma$, which completes the proof.
\end{proof}

\subsection{Coherent stable trivializations}
Consider stable trivializations $Z_{0}$, $Z_{1}$, and $Z$ of $I_{0}$, $I_{1}$, and $I$, respectively. Recall from Section \ref{Sec:tangentbdles} that for any compact subsets $B_1\subset\XX_{1}$ and $B\subset\XX$ there is a pre-gluing map 
\[
\Pre_{\rho}\colon B_{1}\times B\to\XX_{0}
\]
for all sufficiently large $\rho$. Suppose, as in Lemma \ref{Lem:GlueIndex}, we stabilize the linearized $\bar\pa$-operators defining $I_{1}$ and $I$ over $B_{1}$ and $B$ by adding trivial bundles $\R^{m_1}$ and $\R^{m}$ to make the operators everywhere surjective.  Recall the isomorphism $G_{\rho}$ 
\begin{equation} 
G_{\rho}\colon\ker(D\bar\pa\oplus\R^{m_1})_{u_1}\oplus\ker(D\bar\pa\oplus\R^{m})_{u_2}
\to
\ker(D\bar\pa\oplus\R^{m_1}\oplus\R^{m})_{u_1\#_{\rho} u_2}
\end{equation}
 of Equation \eqref{Eqn:linearglue}.  We say that stable trivializations $(Z_1,Z,Z_0)$ are $(d_1,d)$-coherent if for any compact CW-complexes $B_{1}\subset \XX_{1}$ and $B\subset \XX$ of dimensions $d_1$ and $d$, respectively,  the stable trivializations $\Pre^{\ast}_{\rho}Z_{0}$ and $G_{\rho}(Z_1\oplus Z)$ of the pull-back of the index bundle $\Pre^{\ast}I_0$ are homotopic.  Coherent trivializations generalize the more familiar notion of coherent orientations.

\begin{Lemma}\label{lemma:coherent}
For any stable trivialization $Z_0$ of $I_0$ and $Z_1$ of $I_1$, with $Z_1$ trivial over the $1$-skeleton (i.e.~over the $S^1$-factor in $K_{1}^{2n-2}\approx S^{1}\times S^{n-1}$), there is a stable trivialization 
$Z$ of $I$ such that $(Z_1,Z,Z_0)$ is $(1,n-1)$ coherent. 
\end{Lemma}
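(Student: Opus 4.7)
The approach is to construct $Z$ by combining the given $Z_0$ and $Z_1$ via the linear index gluing isomorphism $G_{\rho}$ of Lemma \ref{Lem:GlueIndex}, and then to verify coherence by an obstruction-theoretic analysis over the relevant skeleta from Lemmas \ref{lemma:hconfig} and \ref{Lem:Skeleta}.

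Concretely, I would fix a basepoint $v_0\in\XX_1$ lying in the $S^1$-factor of $K_1^{2n-2}\approx S^1\times S^{n-1}$. For each $u\in\XX$, the pre-glued configuration $\Pre_{\rho}(v_0,u)\in\XX_0$ carries the restriction of $Z_0$, and via $G_{\rho}^{-1}$ this induces a stable trivialization of $I_1|_{v_0}\oplus I|_u$. Subtracting $Z_1|_{v_0}$ in the torsor of stable trivializations yields a stable trivialization $Z_u$ of $I|_u$. Since $\Pre_{\rho}$ and $G_{\rho}$ depend continuously on $u$, this defines a continuous stable trivialization $Z$ over the portion of $\XX$ relevant for $(1,n-1)$-coherence; global existence on $\XX$ follows from the stable triviality of $I$, which reduces via Corollary \ref{Cor:stableindextrivial} to the model computation for the Whitney immersion from Lemma \ref{lemma:strivskel} together with an inductive extension over the higher cells of $\XX\simeq\Omega S^n$.

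To verify $(1,n-1)$-coherence, fix CW-complexes $B_1\subset\XX_1$ and $B\subset\XX$ of dimensions $1$ and $n-1$. The discrepancy
\[
\Pre_{\rho}^{\ast}Z_0|_{B_1\times B}\;-\;G_{\rho}\bigl(Z_1|_{B_1}\oplus Z|_B\bigr)
\]
is a homotopy class of stable automorphisms of the trivial bundle, i.e.~an element of $[B_1\times B,O]$. By construction of $Z$ it vanishes on $\{v_0\}\times B$, so it is pulled back along the projection $B_1\times B\to B_1$ and records the variation of $Z_1$ along $B_1$. By cellular approximation $B_1$ can be assumed to map into the $1$-skeleton $S^1\subset K_1^{2n-2}$, and by hypothesis $Z_1$ is trivial on this $S^1$, so the class vanishes.

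The main obstacle I expect is the bookkeeping needed to ensure that the obstruction class genuinely depends only on $Z_1|_{B_1}$ and not on some independent variation of $\Pre_{\rho}^{\ast}Z_0$ coming through the $B_1$-factor. This requires applying Lemma \ref{Lem:GlueIndex} relative to the basepoint $v_0$ and the $S^1$-factor, to confirm that the classifying map $K_{0,0}\circ\Pre_{\rho}$ restricted to $S^1\times B$ is, up to the homotopy absorbed into $Z_1|_{S^1}$, pulled back from $B$. This is precisely the mechanism by which the $S^1$-triviality of $Z_1$ forces vanishing of the coherence obstruction.
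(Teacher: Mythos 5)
Your overall strategy — build $Z$ by applying $G_\rho^{-1}$ to $\Pre_\rho^*Z_0$ along $\{v_0\}\times\XX$ and subtract $Z_1|_{v_0}$, then verify coherence by an obstruction argument — is close in spirit to the paper's, but there is a genuine gap in the verification step. You assert that because the discrepancy $\Pre_\rho^*Z_0 - G_\rho(Z_1\oplus Z)\in[B_1\times B,O]$ vanishes on $\{v_0\}\times B$, it must be pulled back along $B_1\times B\to B_1$. That implication is false in general: the kernel of restriction $[B_1\times B,O]\to[\{v_0\}\times B,O]$ is $[(B_1/v_0)\wedge B_+, O]$, which contains not only the image of $[B_1,O]$ but also the ``cross'' classes in $[(B_1/v_0)\wedge B,O]$; for $B_1\subset S^1$ and $B\subset S^{n-1}$ this cross term lives in $\pi_n(O)$ and need not vanish. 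You acknowledge this as ``the main obstacle I expect'', and you gesture at resolving it via Lemma~\ref{Lem:GlueIndex}, but that step is not carried out, and without it the argument does not close.

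The paper avoids having to kill this cross term at all. Rather than defining $Z$ pointwise and then confronting an arbitrary class in $[B_1\times B,O]$, it notes that the stable trivialization $Z_0$ constructed in Lemma~\ref{lemma:strivskel} over the almost-broken-disk skeleton $\widehat{Q}_\sigma\subset K_0^{3n-4}$ is, \emph{by construction}, of glued form $G(Z_1^{\mathrm{can}}\oplus Z)$: over that skeleton the linearized operator is a pre-glued problem, the once-stabilized $I_1$ is represented by an isomorphism (giving the canonical trivialization $Z_1^{\mathrm{can}}$), and $Z$ is then simply read off as the remaining factor. After cellular approximation, $B_1$ maps into the $1$-skeleton $S^1\times\{v\}$ and $B$ into $K^{2n-3}\approx S^{n-1}$, so coherence over $B_1\times B$ is tautological for that $Z$ once one identifies the given $Z_1|_{B_1}$ with $Z_1^{\mathrm{can}}|_{B_1}$; this is exactly where the hypothesis that $Z_1$ is trivial over the $S^1$-factor enters, since $Z_1^{\mathrm{can}}|_{S^1}$ is trivial. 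In short, the paper sidesteps the obstruction-theoretic analysis by anchoring $Z_0$ to a trivialization that already has the product form, whereas your construction has to prove that an \emph{a priori} arbitrary discrepancy has no cross component — a claim you have not established.
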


\begin{proof}
After homotopy, we may assume that any closed $1$-complex $B_1$ mapping to $\XX_{1}$ maps into the $1$-skeleton $S^{1}\times v$, for some $v\in S^{n-1}$, and that any compact $(n-1)$-complex mapping to $\XX$ maps into $K^{2n-1}\approx S^{n-1}$. Recall from Lemma \ref{lemma:strivskel} that the trivialization $Z_0$ of $I_0$ over $K_{0}^{3n-4}$ is constructed using almost broken disks. By definition, in such a trivialization $Z_0=G(Z_1\oplus Z)$, where we stabilize the operator defining $I_{1}$ once to make it an isomorphism over $S^{n-1}$, and where $Z$ is some given trivialization of $I$. \end{proof}

\subsection{Proof of Theorem \ref{thm:main}}
Lemma \ref{lemma:hmtpysphere} shows that $K$ is a homotopy sphere. The regular homotopy assumption together with Lemma \ref{lemma:strivskel} implies that the index bundles $I_1$, $I_0$, and $I$ are stably trivial, and using Lemma \ref{lemma:coherent} we find a coherent triple $(Z_1,Z,Z_0)$ of stable trivializations where $Z_1$ is trivial over the $1$-skeleton. Note that the tangent bundle of the $1$-manifold $\FF_{1}$ is canonically isomorphic to $I_1$. Hence, our assumption on the stable trivialization $Z_1$ over the $1$-skeleton implies that the induced stable trivialization of $T\FF_{1}$ extends to a trivialization $\tilde Z_1$ of $T\DD$. We thus get a stable trivialization $\tilde Z_1\oplus Z$ of $T(\DD\times\MM)$, which near $\pa^{\bd}\FF_0$ agrees with $G(Z_1\oplus Z)$ and hence extends as $Z$ to a stable trivialization of $T\FF_0$. We conclude that $T\BB$ is stably trivial.\qed

\subsection{Signature}\label{Sec:signature}

Any (not necessarily closed) $4k$-manifold $X$ has a signature $\sigma(X)$ which is the number of positive minus the number of negative eigenvalues of the intersection form on $H_{2k}(X;\bQ)$.  According to \cite{KM}, the exotic nature of a $(4k-1)$-dimensional homotopy sphere is reflected in the possible signatures of parallelizable manifolds which it bounds.  

Now suppose, as in previous sections, that  $H\colon \bC^n \times [0,1] \rightarrow \bR$ is a compactly supported Hamiltonian function whose time 1 flow displaces a Lagrangian immersion $f\colon\Sigma\to\C^{n}$ with one double point of index $n$. Let $\BB_H$ denote the bounding manifold, with $\pa \BB_H =\Sigma$, obtained from capping the space of Floer solutions as usual, so
$\BB_H =\overline{\FF}_{0}\cup_{\FF_1\times\MM}(\DD\times\MM).$

\begin{Proposition}
$\sigma(\BB_H)$ is independent of $H$, hence defines an invariant $\sigma(f)$ of the original Lagrangian immersion $f: \Sigma \rightarrow \bC^n$.
 If $W: S^n \rightarrow \bC^n$ is the Whitney immersion, then $\sigma(W) = 0$.
\end{Proposition}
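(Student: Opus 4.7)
My plan is to prove both statements by cobordism/symmetry arguments. The first assertion follows from a standard parameterized-moduli-space cobordism, while the second exploits the anti-symplectic symmetry $c(z)=\bar z$ of $\bC^n$, which preserves the Whitney immersion since $c(W(x,y))=W(x,-y)$.

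For independence from $H$, the space of compactly supported Hamiltonians displacing $f(\Sigma)$ is convex, hence path-connected. I would pick a generic path $\{H_s\}_{s\in[0,1]}$ from $H_0$ to $H_1$ and consider the parameterized moduli spaces $\tilde\FF_q = \bigsqcup_{s \in [0,1]} \FF_q(H_s)$. A parametric version of Lemmas \ref{Lem:actualdimensions} and \ref{lemma:weakmoduli} and Theorem \ref{Thm:C1boundary}, following the same analytic input, shows that $\overline{\tilde\FF_0}$ is a $C^1$-manifold with corners whose boundary consists of $\BB_{H_0}$, $-\BB_{H_1}$, a trivial slab $\Sigma \times [0,1]$ of constant solutions, and the broken locus $\tilde\FF_1 \times \MM$ (note that $\MM$ itself is independent of $H$). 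One can cap $\tilde\FF_1$ by a $2$-manifold $\tilde\DD$ interpolating between $\DD_0$ and $\DD_1$ (possible since $\tilde\FF_1$ is an oriented $1$-dimensional bordism between $\pa\DD_0$ and $\pa\DD_1$), then cap $\overline{\tilde\FF_0}$ by $\tilde\DD \times \MM$, producing a compact $C^1$-cobordism rel boundary from $\BB_{H_0}$ to $\BB_{H_1}$. Novikov additivity of the signature under gluing along codimension-one boundaries then yields $\sigma(\BB_{H_0}) = \sigma(\BB_{H_1})$.

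For $\sigma(W) = 0$, I would choose the displacing Hamiltonian $H$ to be $c$-invariant, which can be arranged by averaging any displacing Hamiltonian over the $\bZ/2$ action. Then the Floer equation becomes equivariant under $u \mapsto c \circ u \circ \sigma_D$, with $\sigma_D(z)=\bar z$: this preserves the subdivision into positive and negative punctures and defines an involution $\iota$ on each of $\FF_0, \FF_1, \MM$. Extending $\iota$ suitably to the capping disks in $\DD$, these assemble into an involution $\hat\iota$ of $\BB_W$ restricting on $\pa\BB_W = S^n$ to the reflection $(x,y)\mapsto(x,-y)$. The key step is to verify that $\hat\iota$ reverses orientation when $n = 4k-1$; granting this, the intersection form on $H^{2k}(\BB_W;\bR)$ is carried to its negative by $\hat\iota^*$, forcing $\sigma(\BB_W) = 0$ by the standard eigenspace decomposition argument. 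The orientation-reversal would come from the determinant-line action of $c$ on the linearized $\bar\pa$-operators used to orient the Floer moduli spaces: the $(-1)^n$ contribution from complex conjugation on $\bC^n$, combined with the sign from $\sigma_D$ on the domain and a contribution from the Lagrangian boundary condition, should yield $-1$ in precisely the dimensions $n+1 = 4k$.

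The hard part will be the sign tracking for $\hat\iota$. This requires comparing the conjugation action to the coherent orientation conventions used to orient $\FF_0, \FF_1, \MM$ and their gluing as in Theorem \ref{Thm:C1boundary}, including how conjugation on the domain interacts with the conformal parameter $r$ and with the positions of punctures $\mathbf{z}$. The cobordism in the first part also requires a parametric version of the $C^1$-gluing theorem, but this is expected to be a routine extension of the analysis in Appendix \ref{Appendix}; the rest is formal once the orientation sign is pinned down.
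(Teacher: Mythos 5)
Your argument for the independence of $\sigma(\BB_H)$ from $H$ is essentially the paper's: a parametric family of Floer equations yields a cobordism between $\BB_{H_0}$ and $\BB_{H_1}$, and Novikov additivity finishes the job. (The paper parametrizes by a half-plane $\frak h$ rather than by the interval $[0,1]$, which packages the $r$-parameter and the Hamiltonian interpolation together, but the mechanism is the same. One small dimension slip: $\tilde\FF_1$ has dimension $2$, so the interpolating cap $\tilde\DD$ should be a solid handlebody of dimension $3$, not a $2$-manifold.)

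For $\sigma(W)=0$, however, your proposal diverges from the paper and contains a genuine gap. The paper's sketch reduces the statement to the geometric fact that the Whitney immersion of $S^n$ extends to an exact Lagrangian immersion of an $(n+1)$-ball, which feeds into the same ``global boundary'' mechanism as the first part and exhibits $\BB_W$ itself as a boundary. Your route instead uses the anti-symplectic involution $c(z)=\bar z$. The argument hinges entirely on the claim that the induced involution $\hat\iota$ on $\BB_W$ reverses orientation exactly when $n+1\equiv 0\pmod 4$, and this has not been established; you flag it as ``the hard part,'' but it is the whole proof, and nothing you write makes the asserted dimension-dependence plausible a priori. Pinning the sign down would require comparing $c$-conjugation against the coherent orientation conventions on the determinant lines of the linearized operators for $\FF_0$, $\FF_1$, and $\MM$, tracking the effect on the parameters $(\zeta,r)$, on the gauge-fixing marked points $\pm i$ and spheres $S_\epsilon(a^\pm)$, and on the $C^1$-gluing of Theorem \ref{Thm:C1boundary}, and then showing the involution extends equivariantly over a choice of caps $\DD$. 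There is also a smaller but real issue in the set-up: since $c$ is anti-symplectic, $c_*X_H=-X_{H\circ c}$, so the flow of $H\circ c$ is $c\circ\phi_H^{-t}\circ c$; the straight average $\tfrac12(H+H\circ c)$ of a displacing Hamiltonian need not displace, and the correct equivariance for the Floer data is $H_t\mapsto -H_{1-t}\circ c$ rather than $c$-invariance of $H_t$. None of these obstacles is necessarily fatal, but taken together they amount to substantial unfinished work, whereas the paper's route reduces to one explicit geometric input about a Lagrangian filling of the Whitney sphere.
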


\begin{proof}[Sketch] Both statements are applications of Novikov additivity; we just sketch the main ideas. The first result is obtained by considering a family of displacing Hamiltonians $\{H_s\}$ interpolating two given such $H_0, H_1$, and a corresponding family of Floer-equations \eqref{Eqn:CR-1}$_R$ parametrized by $R \in \frak{h}$, a half-plane, see Figure \ref{fig:HamiltHalfplane}.  
\begin{figure}[ht]
\centering
\includegraphics[width=0.6\linewidth, angle=0]{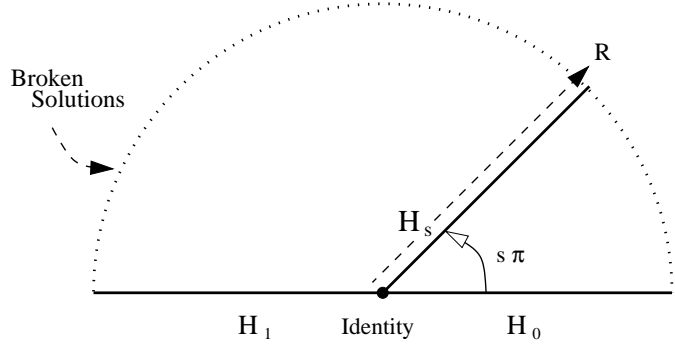} 
\caption{Cauchy-Riemann problems parametrized by a half-plane}
\label{fig:HamiltHalfplane}
\end{figure}  
The Gromov-Floer compactified space of Floer disks with no punctures over $\frak{h}$ admits a cap of the shape $S \times \MM$ for a solid handlebody $S$, and shows that $\BB_{H_0} \cup_{\Sigma} \BB_{H_1}$ is a global boundary, hence has trivial signature. (For this argument, $C^1$-structures and parallelizability play no role.)  The second result follows from a similar construction, using the fact that the Whitney immersion of the $n$-sphere extends to an exact Lagrangian immersion of an $(n+1)$-ball.  We leave details to the reader. 
\end{proof}

There is a natural evaluation map $ev_1: \FF_0 \rightarrow \bC^n$ which extends a given immersion $f$,  but in general there is no reason to expect $ev_1$ to be better-behaved than a generic map.  The regular homotopy classes of Lagrangian immersions in Euclidean space of a homotopy $(8k-1)$-sphere are classified by $\pi_{8k-1}(U) \cong \bZ$.  The set of elements realized by immersions with one double point of Maslov grading $n$ of some given homotopy sphere $\Sigma$ may be related, via the signature of the bounding manifold $\BB$, to the singularities of the evaluation map $ev_1$ on high-dimensional moduli spaces of Floer disks.  It would be interesting to understand if those singularities can be interpreted in terms of the local analysis of the underlying Floer disks themselves. 


\appendix
\section{Smooth structures on compactified moduli spaces} \label{Appendix}
In this appendix we present  the details of the arguments needed for the gluing results leading to Theorem \ref{Thm:C1boundary} in Section \ref{Sec:modulispaces}. Much of this material appears in similar form elsewhere, but is collected here in a streamlined presentation that fits our needs. 

\subsection{Functional analytic set up}\label{Ssec:functional-analytic}
In this section we describe in greater detail the functional analytical set up that we use to derive properties of solution spaces of (perturbed) Cauchy-Riemann equations. We first recall the set up from Section \ref{Sec:modulispaces}: $f\colon \Sigma\to \C^{n}$ is a Lagrangian immersion such that $f(\Sigma)$ is real analytic and such that the double point of $f$ lies at $0\in \bC^n$, the branches of $f(\Sigma)$ near the double point are flat and co-incide with $\bR^n \cup i\bR^n$ in a sufficiently small ball around the origin. Let $\tilde f=(f,h)\colon \Sigma\to \C^{n}\times\R$ denote the Legendrian lift of $f$ into $\C^{n}\times\R$ with lower Reeb chord endpoint at level $0$ in $\R$. Also, $\mathcal{J}_{\Sigma}$ is the space of smooth almost complex structures that agree with the standard complex structure $J_{0}$ in a small fixed neighborhood of $f(\Sigma)$. In this section, we write $\sblv^{k}$ for the Sobolev space of maps, or sections in a bundle, with $k$ derivatives in $L^{2}$.  

We first introduce the configuration spaces and associated spaces that we will use for various domains, and then turn to their structures as Banach manifolds and Banach bundles.

\subsubsection{Configuration spaces for the closed disk}\label{ssec:closeddisk}
Let $D$ denote the closed unit disk in the complex plane. Consider the space $\sblv^{2}(D;\C^{n})$ of maps $u\colon D\to\C^{n}$ with two derivatives in $L^{2}$ with the standard norm $\|u\|_{2}$. Note in particular that elements in $\sblv^{2}(D;\C^{n})$ are representable by continuous maps. For $J\in \mathcal{J}_{\Sigma}$ and $u\in\sblv^{2}(D;\C^{n})$ let $\sblv^{1}(\Hom^{0,1}(TD,u^{\ast}(T\C^{n})))$ denote the Sobolev space of $(J,i)$-complex anti-linear maps $TD\to u^{\ast}(T\C^{n})$ with one derivative in $L^{2}$.
Then for $u\in \sblv^{2}(D;\C^{n})$, $du^{0,1}\in\sblv^{1}(\Hom^{0,1}(TD,u^{\ast}(T\C^{n})))$ and consequently the trace (the ``restriction'' to the boundary) $du^{0,1}|_{\pa D}$ lies in the Sobolev space $\sblv^{\frac{3}{2}}$ on $\pa D$. 

Define $\XX\subset \sblv^{2}(D;\C^{n})\times \sblv^{\frac32}(\pa D;\R)$ to be the subset of pairs $(u,h)$, where $u\colon D\to \C^{n}$ and $h\colon\pa D\to\R$ satisfies the following conditions:
\begin{enumerate}
\item $(u|_{\pa D},h)\colon \pa D\to \C^{n}\times\R$ takes values in the Legendrian lift $\tilde f(\Sigma)$ of $f\colon \Sigma\to\C^{n}$; 
\item $du^{0,1}|_{\pa D}=0$;
\end{enumerate}
The space $\XX$ will serve as our configuration space. 

We will also use a bundle $\YY$ over $\XX$. To define it, let $(u,h) \in\XX$ and consider the Sobolev space $\sblv^{1}(\Hom^{0,1}(TD,u^{\ast}(T\C^{n})))$ of complex anti-linear maps. The fiber $\YY_u$ is then the closed subspace 
\[
\dot\sblv^{1}(\Hom^{0,1}(TD,u^{\ast}(T\C^{n})))\subset
\sblv^{1}(\Hom^{0,1}(TD,u^{\ast}(T\C^{n})))
\]
of complex anti-linear maps $A$ with vanishing trace $A|_{\pa D}$ in $\sblv^{\frac12}(\pa D)$. 

\subsubsection{Configuration spaces for disks with punctures and marked points}\label{ssec:punctureddisks}
We next consider disks with punctures. Let $\mathbf{z}=(z_1,\dots,z_m)$ be a collection of $m$ distinct points in $\pa D$ cyclically ordered according to the orientation of $\pa D$. Consider the function 
$\hat\delta(z_j)\in\R_{>0}$, which equals the minimum of the two distances from $z_j$ to its closest neighbors. Fix functions $\delta(z_j)\in\R_{>0}$ that approximate $\hat\delta$ in the sense that $\hat\delta(z_j) >\delta(z_j)>\frac12\hat\delta(z_j)$ and which depend smoothly on $\mathbf{z}$. Using $\delta$ we define a family of metrics on $D-\mathbf{z}$ as follows. For each $z_j$ fix a conformal equivalence $\phi_{j}\colon [-2,\infty)\times[0,1]\to D$ which maps $\infty$ to $z_j$, and takes $(-1,0)$ and $(-1,1)$ to points on $\pa D$ at distance $\frac12\delta(z_j)$ from $z_j$. Define a Riemannian metric on $D-\mathbf{z}$ that agrees with the standard metric on the strip in the regions $\phi_j([0,\infty)\times[0,1])$ around each puncture and which agrees with the standard flat metric on $D$ outside the union of the regions $\phi_j([-1,\infty)\times[0,1])$. We write $D_{\mathbf{z}}$ for the disks with punctures at the points in $\mathbf{z}$ and with this metric.

Consider the Legendrian lift $\tilde f$ of $f$ and note that the $\R$-coordinate of $\tilde f$ is constant in a neighborhood of the double point. By definition the value at the lower branch is $0$. Let $\ell$ denote its value at the upper branch.
Consider next a subdivision $\mathbf{z}=\mathbf{z}_{+}\cup\mathbf{z}_{-}$ and fix a smooth function $h_{0}\colon\pa D_{\mathbf{z}}\to\R$ such that the following hold:
\begin{itemize}
\item If $z_j\in\mathbf{z}_{+}$ then $h_0$ equals $0$ and $\ell$ for points in $\pa D$ at distance $\leq \frac{1}{4}\delta(z_j)$ in the clockwise and counterclockwise direction, respectively, from $z_j$
\item If $z_j\in\mathbf{z}_{-}$ then $h_0$ equals $\ell$ and $0$ for points in $\pa D$ at distance $\leq \frac{1}{4}\delta(z_j)$ in the clockwise and counterclockwise direction, respectively, from $z_j$
\end{itemize}
  
In analogy with Section \ref{ssec:closeddisk}, we define $\XX_{\mathbf{z}}\subset \sblv^{2}(D_{\mathbf{z}};\C^{n})\times \sblv^{\frac32}(\pa D_{\mathbf{z}};\R)$ to be the subset of pairs $(u,h)$, where $u\colon D\to \C^{n}$ and $h\colon\pa D\to\R$, satisfying the following conditions:
\begin{enumerate}
\item $(u|_{\pa D},h+h_0)\colon \pa D\to \C^{n}\times\R$ takes values in the Legendrian lift $\tilde f(\Sigma)$ of $f\colon \Sigma\to\C^{n}$; 
\item $du^{0,1}|_{\pa D}=0$.
\end{enumerate}
The space $\XX_{\mathbf{z}}$ will be our configuration space for punctured disks. We will also use the bundle $\YY_{\mathbf{z}}$ over $\XX_{\mathbf{z}}$ with fiber over $u\in\XX_{\mathbf{z}}$ equal to the closed subspace $\dot\sblv^{1}(\Hom^{0,1}(TD_{\mathbf{z}},u^{\ast}(T\C^{n})))$ of complex anti-linear maps $A$ with vanishing trace $A|_{\pa D_{\mathbf{z}}}$ in $\sblv^{\frac12}(\pa D_{\mathbf{z}})$.
For simpler notation below we will often suppress the $\R$-valued function $h$ from the notation. We point out that this function $h$ is mostly a bookkeeping device which  allows us to preclude from the configuration spaces maps with ``fake'' punctures, where the boundary of the map switches sheets.

When convenient we will often write $D_{p,q}$ for $D_{\mathbf{z}}$ for an unspecified $\mathbf{z}\subset\pa D$ with subdivision $\mathbf{z}=\mathbf{z}_{+}\cup\mathbf{z}_{-}$, where $\mathbf{z}_{+}$ has $p$ elements and $\mathbf{z}_{-}$ has $q$ elements. Analogously, we write $\mathcal{X}_{p,q}$ and $\mathcal{Y}_{p,q}$. Also, to include the case of the closed disk as a special case of our general results below we let $D_{0,0}=D$ and use analogous notation for the associated functional analytic spaces.

Finally, we will also consider the disk with one puncture at $1\in\pa D$ and two marked points at $\pm i\in\pa D$. We endow this disk with a metric as on $D_{\mathbf{z}}$, where $\mathbf{z}$ is the one point set containing only the point $1$.

\subsubsection{A metric for local coordinate charts in configuration spaces}\label{ssec:metric}
We review a construction in \cite[Section 5.2]{EES1}. Consider the restriction $T_\Sigma(T\Sigma)$ of the tangent bundle of $T\Sigma$ to the $0$-section $\Sigma\subset T\Sigma$. Let $J_{\Sigma}\colon T_\Sigma(T\Sigma)\to T_\Sigma(T\Sigma)$ denote the natural complex structure which maps a horizontal vector tangent to $\Sigma\subset T\Sigma$ at $q\in \Sigma$ to the corresponding vector tangent to the fiber $T_q\Sigma\subset T\Sigma$ at $0\in T_q\Sigma$. Using Taylor expansion in the fiber directions, and the special form of $f\colon\Sigma\to\C^{n}$ near the double point, it is straightforward to check that the immersion $f\colon \Sigma\to \C^{n}$ admits an extension $P\colon U\to \C^{n}$, where $U\subset T\Sigma$ is a neighborhood of the $0$-section, such that $P$ is an immersion with $J_0\circ dP = dP\circ J_\Sigma$ along the $0$-section $\Sigma\subset U$.  (Recall that $J_0$ denotes the standard complex structure on $\C^n$.) 

We next construct a metric $\hat g$ on a neighborhood of the $0$-section in $T\Sigma$. Fix a Riemannian metric $g$ on $\Sigma$, which we take to agree with the standard flat metric on a neighborhood of the preimage of the double point $0\in\C^{n}$, where $f(\Sigma)$ agrees with $\R^{n}\cup i\R^{n}\subset \C^{n}$. 
Let $v\in T\Sigma$ with $\pi(v)=q$. Let $X$ be a tangent vector to $T\Sigma$ at $v$. The Levi-Civita connection of  $g$ gives the decomposition $X=X_{H}+X_{V}$, where $X_{V}$ is a vertical tangent vector, tangent to the fiber, and where $X_{H}$ lies in the horizontal subspace at $v$ determined by the connection. Since $X_V$ is a vector in $T_q\Sigma$ with its endpoint at $v\in T_q\Sigma$ we can translate it linearly to the origin $0\in T_q\Sigma$; we also use $X_V$ to denote this translated vector. Write $\pi X\in T_q\Sigma$ for the image of $X$ under the differential of the projection $\pi\colon T\Sigma\to \Sigma$. Let $R$ denote the curvature tensor of $g$ and define the field $\hat g$ of quadratic forms along $T\Sigma$ as follows: 
\begin{equation}
\hat{g}(v)(X,Y)=g(q)(\pi X,\pi Y)+g(q)(X^{V},Y^{V})+g(q)(R(\pi X,v)\pi Y,v),
\end{equation} 
where $v\in T\Sigma$, $\pi(v)=q$, and $X,Y\in T_v(T\Sigma)$.

\begin{Lemma}
There exists $\delta>0$ such that $\hat g$ is a Riemannian metric on $\{v\in T\Sigma\colon g(v,v)< \delta\}$. In this metric the $0$-section $\Sigma$ is totally geodesic and the geodesics of $\hat g$ in $\Sigma$ are exactly those of the original metric $g$. Moreover, if $\gamma$ is a geodesic in $\Sigma$ and $X$ is a vector field in $T(T\Sigma)$ along $\gamma$ then $X$ is a Jacobi field if and only if $J_\Sigma X$ is. 
\end{Lemma}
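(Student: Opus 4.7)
The plan is to establish the three claims in sequence: positive definiteness on a neighborhood of the zero section, that $\Sigma$ is totally geodesic with matching geodesic systems, and finally the $J_\Sigma$-invariance of the Jacobi condition.

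\textbf{Positive definiteness.} Along the zero section $v=0$ the curvature correction $g(q)(R(\pi X,v)\pi Y,v)$ vanishes, and the remaining expression $g(q)(\pi X,\pi Y)+g(q)(X^V,Y^V)$ is manifestly positive definite because it is a direct sum of two copies of $g$ evaluated on the splitting $T_v(T\Sigma)=H_v\oplus V_v$. By continuity of the curvature term in $v$ and fiberwise compactness of the unit sphere bundle of $(T\Sigma,g)$ over the compact manifold $\Sigma$, there exists a uniform $\delta>0$ such that $\hat g$ remains positive definite on $\{g(v,v)<\delta\}$.

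\textbf{Totally geodesic and matching geodesics.} I would use the fiberwise involution $\sigma\colon T\Sigma\to T\Sigma$, $v\mapsto -v$. Its differential preserves the horizontal/vertical splitting, acts as $+\id$ on $H$ and $-\id$ on $V$. Then $g(\pi X,\pi Y)$ is unchanged, $g(X^V,Y^V)$ picks up $(-1)^2=1$, and the curvature term $g(R(\pi X,v)\pi Y,v)$ is quadratic in $v$ and so is also unchanged. Hence $\sigma$ is an isometry of $\hat g$, and its fixed set is exactly the zero section, so $\Sigma\subset T\Sigma$ is totally geodesic. Since $\hat g|_{\Sigma}=g$ by direct inspection (the vertical and curvature terms vanish on horizontal vectors at $v=0$), the geodesics of $\hat g$ contained in $\Sigma$ coincide with the geodesics of $g$.

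\textbf{Jacobi fields and $J_\Sigma$-invariance.} Let $\gamma$ be a $g$-geodesic in $\Sigma$, viewed as a $\hat g$-geodesic in $T\Sigma$ by the previous step. Decompose a section $X$ of $T(T\Sigma)$ along $\gamma$ as $X=X_H+X_V$, with $X_H$ the horizontal lift of a vector field $\xi_H$ along $\gamma$ in $\Sigma$ and $X_V$ the vertical lift of a vector field $\xi_V$. I would compute the Levi-Civita connection $\hat\nabla$ of $\hat g$ restricted to the zero section, verifying that $\hat\nabla_t X_H$ and $\hat\nabla_t X_V$ are, respectively, the horizontal and vertical lifts of $\nabla^g_t\xi_H$ and $\nabla^g_t\xi_V$; the curvature correction in $\hat g$ is tailored so that the contribution of the $\hat g$-curvature tensor to the Jacobi equation produces $R^g(\xi_H,\dot\gamma)\dot\gamma$ horizontally and $R^g(\xi_V,\dot\gamma)\dot\gamma$ vertically. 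The single $\hat g$-Jacobi equation along $\gamma$ then decouples into two independent copies of the $g$-Jacobi equation on $\Sigma$:
\begin{equation*}
\nabla^g_t\nabla^g_t\xi_H+R^g(\xi_H,\dot\gamma)\dot\gamma=0,\qquad \nabla^g_t\nabla^g_t\xi_V+R^g(\xi_V,\dot\gamma)\dot\gamma=0.
\end{equation*}
Since $J_\Sigma$ interchanges horizontal and vertical lifts along the zero section (by the very definition of $J_\Sigma$ and $J_\Sigma^2=-\id$), $J_\Sigma X$ corresponds to the pair $(-\xi_V,\xi_H)$; by linearity of the equations above, $X$ is a Jacobi field if and only if $J_\Sigma X$ is.

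\textbf{Main obstacle.} The delicate point is precisely the bookkeeping in the computation of $\hat\nabla$ and $\hat R$ along the zero section, ensuring that the specific curvature correction in the definition of $\hat g$ produces exactly the curvature term needed in the vertical Jacobi equation. Rather than grinding through Christoffel symbols, I would organize the verification via a two-parameter variation $\Gamma(s,t)$ of $g$-geodesics in $\Sigma$, associating to it the curve $s\mapsto(\Gamma(s,t),\nabla^g_s\partial_t\Gamma(s,t))$ in $T\Sigma$, and checking that its $\hat g$-second variation of energy reduces to the classical $g$-Jacobi operator applied to $\partial_s\Gamma$. This transparently exposes why the curvature term in $\hat g$ is precisely the right correction.
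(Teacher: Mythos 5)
The paper offers no argument for this lemma at all: its ``proof'' is the single sentence quoting \cite{EES1} (Proposition 5.3), where the statement is established by a direct local computation of the connection and curvature of $\hat g$ along the zero section. So the comparison to make is with that computation. Your first two steps are complete, correct, and in fact cleaner than a coordinate calculation: positive definiteness follows from the vanishing of the curvature term along the zero section together with compactness of $\Sigma$, and the observation that the fiberwise map $v\mapsto -v$ is a $\hat g$-isometry (it preserves the horizontal/vertical splitting, leaves the horizontal and curvature terms unchanged, and changes the sign of the vertical component twice) whose fixed locus is the zero section gives total geodesy immediately, with $\hat g|_{\Sigma}=g$ identifying the two geodesic systems.

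The gap is in the third step, which is the actual content of the lemma. You correctly identify the mechanism: along a zero-section geodesic the $\hat g$-Jacobi equation should split into a horizontal and a vertical copy of the $g$-Jacobi equation, and $J_\Sigma$ interchanges the two blocks. But the one statement that genuinely needs proof, namely that the vertical block of the Jacobi operator is $\xi_V\mapsto R(\xi_V,\dot\gamma)\dot\gamma$, is asserted rather than verified (``the curvature correction is tailored so that\ldots''). This is exactly where the correction term $g(R(\pi X,v)\pi Y,v)$ does its work: for the uncorrected Sasaki metric the component $\hat R(U^{v},\dot\gamma^{h},\dot\gamma^{h},W^{v})$ vanishes along the zero section, so vertical Jacobi fields would satisfy $\nabla_t^2\xi_V=0$ and $J_\Sigma$ would \emph{not} preserve Jacobi fields; the lemma hinges on checking that the fiberwise Hessian of the correction contributes precisely $g\bigl(R(U,\dot\gamma)\dot\gamma,W\bigr)$ to this curvature component, and that computation (the heart of \cite{EES1}, Proposition 5.3) is absent from your argument. (The decoupling itself, i.e.\ the absence of cross terms and the fact that $\hat\nabla_t$ preserves the splitting, you could obtain for free from total geodesy via the Codazzi equation and the parallelism of the tangent/normal splitting, instead of recomputing $\hat\nabla$.) Moreover, the verification strategy you sketch is not set up correctly as written: in the variation $(s,t)\mapsto\bigl(\Gamma(s,t),\nabla^g_s\partial_t\Gamma(s,t)\bigr)$ the curve at $s=0$ is $t\mapsto\bigl(\gamma(t),\nabla^g_s\partial_t\Gamma(0,t)\bigr)$, which lies in the zero section only if $\nabla^g_s\partial_t\Gamma(0,\cdot)\equiv 0$, so it is not in general a variation of the zero-section geodesic, and one would still have to show that the curves of the family are $\hat g$-geodesics. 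In short, the route is the right one, but the key curvature identity remains to be proved.
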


\begin{proof}
This is \cite[Proposition 5.3]{EES1}.
\end{proof}

Consider the immersion $P\colon U\to \C^{n}$, where $U$ is a neighborhood of the $0$-section in $T\Sigma$, with $J_0\circ dP=dP\circ J_\Sigma$. The push forward under $dP$ of the metric $\hat g$ gives a metric on a neighborhood of $f(\Sigma)$ in $\C^{n}$. (Recall that we use a metric on $\Sigma$ which agrees with the flat metric on the two sheets near the intersection point, which means that the push forwards of the metric on the two sheets agree near the double point.) Extend the metric $\hat g$ to a metric, still denoted $\hat g$, on all of $\C^{n}$ which we take to agree with the standard flat metric on $\C^{n}$ outside a (slightly larger) neighborhood of $f(\Sigma)$. We write $\exp\colon T\C^{n}\to\C^{n}$ for the exponential map in the metric $\hat g$. Since $\Sigma$ is totally geodesic for $\hat g$, $\exp$ takes tangent vectors to $f(\Sigma)$ to points on $f(\Sigma)$.

\subsubsection{Configuration spaces as Banach manifolds}\label{ssec:cfigBanach}
We show that the configuration spaces defined above are Banach manifolds and we provide an atlas with $C^{1}$-smooth transition functions. 

\begin{Lemma}\label{lemma:exponentialmap}
The subset $\XX_{p,q}\subset\sblv^{2}(D_{p,q},\C^{n})\times \sblv^{\frac{3}{2}}(\pa D_{p,q},\R)$ is closed and is a Banach submanifold. The tangent space $T_{u}\XX_{p,q}$ at $u$ is the subspace of vector fields  $v\in\sblv^{2}(D_{p,q},\C^{n})$ such that 
\begin{enumerate}
\item $v|_{\pa D_{p,q}}(z)\in df(T_{\sigma(z)}\Sigma)$, where $\tilde f(\sigma(z))=(u,h+h_0)(z)$ for all $z\in\pa D_{p,q}$ (note that along $\pa D_{p,q}$, $(u,h+h_0)$ gives a map into the embedded submanifold $\tilde f(\Sigma)\subset\C^{n}\times\R$).
\item $(\nabla v)^{0,1}|_{\pa D_{p,q}}=0\in\sblv^{\frac{1}{2}}(\pa D_{p,q},\Hom^{0,1}(TD_{p,q},T\C^{n}))$,
\end{enumerate}
where $\nabla$ denotes the Levi-Civita connection of the metric $\hat g$, see Section \ref{ssec:metric}. Furthermore, the map $\Exp\colon T_{u}\cfig_{p,q}\to\cfig_{p,q}$,
\[
[\Exp(v)](z)=\exp_{u(z)}(v(z)),\quad z\in D_{p,q},
\]
gives local $C^{1}$-coordinates near $u$ when restricted to $v$ in a sufficiently small ball around $0\in T_{u}\cfig_{p,q}$. 
\end{Lemma}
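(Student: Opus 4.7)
The plan is to establish both the Banach submanifold structure and the tangent space description simultaneously by exhibiting $\Exp$ as an explicit coordinate chart at $u$. I would proceed in the following order.

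First, closedness of $\XX_{p,q}$ in the ambient Sobolev space follows from standard trace estimates. Condition (1) is preserved under $\sblv^{2}$-convergence because the trace $\sblv^{2}(D_{p,q})\to\sblv^{3/2}(\pa D_{p,q})\hookrightarrow C^{0}(\pa D_{p,q})$ is continuous, $\tilde f(\Sigma)$ is an embedded submanifold, and $h_{0}$ absorbs the prescribed jumps of the $\R$-coordinate at the punctures. Condition (2) is closed since, in view of $J\equiv J_{0}$ on a neighborhood of $f(\Sigma)$, the assignment $u\mapsto du^{0,1}|_{\pa D_{p,q}}$ is continuous from $\sblv^{2}(D_{p,q})$ into $\sblv^{1/2}(\pa D_{p,q})$.

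Next, and this is the heart of the argument, I would verify that for $v$ in a small neighborhood of $0$ in the proposed tangent space, the map $\Exp(v)$ lies in $\XX_{p,q}$. Condition (1) is immediate: since $f(\Sigma)$ is totally geodesic in the metric $\hat g$ and $v(z)\in df(T_{\sigma(z)}\Sigma)$ for $z\in\pa D_{p,q}$, the geodesic $t\mapsto \exp_{u(z)}(tv(z))$ stays in $f(\Sigma)$, and the $\R$-coordinate of the corresponding point on $\tilde f(\Sigma)$ is determined by the Legendrian structure (with branching encoded in $h_{0}$). Condition (2) is the key point: I would check in a collar of the boundary that the $(0,1)$-part of $d\Exp(v)|_{\pa D_{p,q}}$ is computed, via the variation formula for the exponential map, in terms of Jacobi fields along the geodesics $t\mapsto\exp_{u(z)}(tv(z))$. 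The hypothesis $(\nabla v)^{0,1}|_{\pa D_{p,q}} = 0$ prescribes vanishing of certain initial data for these Jacobi fields; the Jacobi-field property of $\hat g$ (that $X$ is Jacobi iff $J_{\Sigma}X$ is) then propagates this vanishing along the geodesic, yielding $d\Exp(v)^{0,1}|_{\pa D_{p,q}} = 0$.

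Granted this, I would apply the Banach inverse function theorem. The differential $d\Exp_{u}|_{0}$ is the identity on the stated tangent space, and $\Exp_{u}$ is a $C^{1}$ map of Banach manifolds (though generally no smoother, given that the $\sblv^{2}$-regularity in two dimensions lies just at the critical threshold for $C^{1}$-composition estimates). Hence $\Exp_{u}$ restricts to a $C^{1}$-diffeomorphism from a small ball in $T_{u}\XX_{p,q}$ onto a neighborhood of $u$ in $\XX_{p,q}$. These charts simultaneously exhibit $\XX_{p,q}$ as a Banach submanifold, identify its tangent space with the subspace described in the statement, and provide a $C^{1}$-atlas with $C^{1}$-transition functions.

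The main obstacle is the second step, specifically verifying the nonlinear persistence of condition (2) beyond its first-order linearization. This is precisely the content for which the auxiliary metric $\hat g$ was engineered in Section \ref{ssec:metric}: its Jacobi-field commutation property is designed exactly to make the passage from $(\nabla v)^{0,1}|_{\pa D_{p,q}}=0$ to $d\Exp(v)^{0,1}|_{\pa D_{p,q}}=0$ go through automatically. The remaining ingredients, namely the trace estimates for closedness and the Banach inverse function theorem, are routine.
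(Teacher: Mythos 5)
Your proposal is correct and takes essentially the same route as the paper, whose proof likewise deduces closedness from the fact that the $\sblv^{2}$-norm controls the sup-norm, reads off the tangent space from the first-order part of the defining conditions, and uses the $\hat g$-exponential map as a $C^{1}$ chart, deferring the totally-geodesic and Jacobi-field verifications you sketch to \cite[Lemma 3.2]{EES} and \cite[Lemmas 5.10 and 5.11]{EES1}. One small point of phrasing: since $\XX_{p,q}$ is not yet known to be a Banach manifold, the inverse function theorem step is better replaced by the direct check that for $u'\in\XX_{p,q}$ close to $u$ the field $v(z)=\exp_{u(z)}^{-1}(u'(z))$ lies in $T_{u}\XX_{p,q}$ --- the same Jacobi-field argument run in reverse --- which is precisely the ``bijection onto a neighborhood'' the paper asserts.
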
 

\begin{proof}
The fact that $\XX_{p,q}$ is a closed subset is a consequence of the fact that the norm in $\sblv^{2}$ controls the sup-norm. The tangent space is obtained from the first order part of the equations defining $\XX_{p,q}$. Finally, it is straightforward to check that the local coordinate map gives a bijection from a small ball in the tangent space at $u$ to an open neighborhood of $u$. The $C^{1}$-property follows from the smoothness of the map $\exp_{x}(y)$. For details, see \cite[Lemma 3.2]{EES} and \cite[Lemmas 5.10 and 5.11]{EES1}.
\end{proof}

With the configuration spaces $\XX_{p,q}$ equipped with $C^{1}$ structures it is straightforward to see that the bundles $\YY_{p,q}$ are locally trivial bundles with $C^{1}$ local trivializations. Explicit such local trivializations using parallel translation in the metric $\hat g$ are given in \cite[Section 3.2]{EES}.

For the transversality results in Section \ref{Sec:transversality} we need to look at parameterized versions of the configuration spaces just considered. More precisely, let $Z_{p,q}$ denote the space of configurations $\mathbf{z}=\mathbf{z}_{+}\cup \mathbf{z}_{-}$ of distinct punctures on $\pa D$ with $p$ elements in $\mathbf{z}_{+}$ and $q$ in $\mathbf{z}_{-}$. Then $Z_{p,q}$ is a $(p+q)$-dimensional manifold. Recall the space $\mathcal{J}_{\Sigma}$ of admissible almost complex structures on $\C^{n}$ equal to the standard structure in a neighborhood of $f(\Sigma)$. Also, let $\mathcal{D}$ denote the space of admissible displacing Hamiltonians $H\colon \C^{n}\times[0,1]\to\R$ fulfilling the conditions specified in Section \ref{sec:dispham}.

Consider the bundle $\overline\XX_{p,q}\to Z_{p,q}\times\mathcal{J}_{\Sigma}\times\mathcal{D}$ with fiber over $(\mathbf{z},J,H)$ given by $\XX_{p,q}$ defined with respect to the Riemann surface $D_{\mathbf{z}}$ and the almost complex structure $J$.  The space $\overline\XX_{p,q}$ does not depend on the Hamiltonian, and depends on the complex structure only through the condition of holomorphicity at the boundary;  in particular, the bundle  $\overline\XX_{p,q}\to Z_{p,q}\times\mathcal{J}_{\Sigma}\times\mathcal{D}$ is canonically trivialized along the last two factors, since all elements $J\in\mathcal{J}_{\Sigma}$ agree near the boundary.  Given that, one may obtain $C^{1}$-charts giving $\overline\XX_{p,q}$ the structure of a Banach manifold by following \cite[Section 5.6]{EES1}.  Consider also the bundle $\overline\YY_{p,q}\to\overline\XX_{p,q}$ with fiber over $\XX_{p,q}$ equal to $\YY_{p,q}$. Using local trivializations as in \cite[Section 3.2]{EES} we find that $\overline\YY_{p,q}\to\overline\XX_{p,q}$ is a locally trivial bundle with $C^{1}$-transition functions.

\subsubsection{Cauchy-Riemann operators as Fredholm sections}\label{app:CRFredholm}
Consider the bundle $\overline\YY_{p,q}\to\overline{\XX}_{p,q}\times[0,\infty)$, extended trivially over the last factor, and the section $\bar\pa_{\mathrm{F}}\colon\overline{\XX}_{p,q}\to\overline\YY_{p,q}$ where
\[
\bar\pa_{\mathrm{F}}(u,\mathbf{z},J,H,r)=\left(du+X_{H}\otimes\gamma_{r}\right)^{0,1},
\]
where the complex anti-linear part is with respect to the complex structure $(J,i)$, where $X_{H}$ is the Hamiltonian vector field of $H$, and where $\gamma_{r}$ is the $1$-form described in Section \ref{sec:dispham}.  If $\bar\pa_{\mathrm{F}}(u)=0$ we say $u$ is a Floer holomorphic disk. We write $\bar\pa$ for the restriction of $\bar\pa_{\mathrm{F}}$ to $H=0$ and $r=0$.

\begin{Remark}\label{rem:Fourierexpansion}
Since an admissible Hamiltonian $H=\{H_t\}_{t\in [0,1]} \in \mathcal{D}$ is constant near $t=0,1$ and since $\gamma_r$ has compact support, it follows that for each $r_0$ there exists $\delta_0$ such that $\gamma_r\otimes X_H=0$ in a $\delta_0$-neighborhood of $\pa D_{p,q}$. Since in addition $J=J_0$ near $\Sigma$,   \eqref{Eqn:CR-1} reduces to the ordinary $\bar\pa$-equation $\bar\pa_{J_0} u=du^{0,1}=0$ in this $\delta_0$-neighborhood. In particular, this holds in a neighborhood of each puncture and we find that each Floer disk admits a Fourier expansion
\[
u(\tau+i\sigma)=\sum_{k\ge 0} c_k e^{-(k+\frac12)\pi(\tau+i\sigma)},\quad c_k \in \begin{cases} \R^{n}\text{ for all $k$ at a positive puncture} \\   i\R^{n} \text{  for all $k$ at a negative puncture}, \end{cases}
\]  
where $\tau+it\in[0,\infty)\times[0,1]$ conformally parameterizes a neighborhood of one of the punctures of $D_{p,q}$.
\end{Remark}

The section $\bar\pa_{\mathrm{F}}$ and its restriction $\bar\pa$ are both Fredholm sections, with linearization in the $\XX_{p,q}$-direction given by
\begin{equation}\label{eq:lindbar}
D(\bar\pa_{\mathrm{F}})(v)=(\nabla v)^{0,1}+Kv,
\end{equation}
where $v\in T_{u}\XX_{p,q}$ and where $K$ is a compact operator that vanishes on the boundary, see \cite[Lemma 4.2]{EES} and \eqref{eq:defG}. 
\begin{Remark}
We comment on our choice of Sobolev norm. It is natural to require that our configuration spaces consist of continuous maps. Keeping regularity as low as possible this leads to a Sobolev space of maps with one derivative in $L^{p}$, $p>2$. In order to establish necessary elliptic estimates in that set up one uses Calderon-Zygmund techniques as described in e.g.~\cite{McDSal2}. Here, instead, we use the lowest regularity  possible, involving only $L^{2}$-norms, which places us in $\sblv^{2}$. In this setup it is more straightforward to establish elliptic estimates via Fourier analysis in combination with doubling across the boundary. In order for the doubling of a function to lie in $\sblv^{2}$ one must make sure that the second derivative of the doubling lies in $L^{2}$. This is the origin of our ``extra'' boundary condition, where we require $\bar\pa u|_{\pa D}=0$.     
\end{Remark}
The Fredholm index of the operator in \eqref{eq:lindbar} is equal to the Fredholm index of $(\nabla v)^{0,1}$ which in turn was computed in \cite[Proposition 6.17]{EES1}: the Fredholm section $\overline{\XX}_{p,q}\to\overline{\YY}_{p,q}$ has index
\begin{align*}
\ind(D(\bar\pa_{\mathrm{F}}))=2p + (1-q)n.
\end{align*}

The Fredholm section $\bar\pa_{\mathrm{F}}$ is transverse to the $0$-section. Indeed, this is straightfoward to see using variations of the Hamiltonian vector field and unique continuation, see e.g.~\cite[Section 9.2]{McDSal2}. For the operator $\bar\pa$ it is not quite as straightforward: if $p>1$ then solutions may be multiple covers of solutions with fewer punctures and it is impossible to achieve transversality perturbing the almost complex structure only. In order to  deal with such problems a number of more involved pertubation schemes have been developed, e.g.~Kuranishi structures as in \cite{FO3} or polyfolds as in \cite{HWZ}. A common feature of these is that the solution spaces are no longer smooth manifolds but more general objects like Kuranishi spaces or weighted branched manifolds. However, in the cases of relevance for this paper all (unperturbed) holomorphic disks have exactly one positive puncture $p=1$ and in this case transversality can be achieved by  perturbing the almost complex structure only. The argument involves counting the number of branches near the positive puncture to show that cancellations corresponding to the multiply covered case cannot arise, see \cite[Lemma 4.5 (1)]{EES} for the details. This means in particular that the Fredholm theory we need is rather straightforward  and need not appeal to any developments beyond classical Banach bundles. In particular, all the transversality results in Section \ref{Sec:transversality} are standard applications of the Sard-Smale theorem.   

\subsection{Disks with only one positive puncture}\label{ssec:1+gauge}
As explained in Section \ref{Sec:Breaking}, it follows from general transversality results in combination with Gromov-Floer compactness that in order to understand the compactification of the moduli space of closed Floer disks with boundary on the exact Lagrangian $f\colon \Sigma\to \C^{n}$ we need to study, except for the closed disks themselves, only Floer disks with one negative puncture and holomorphic disks with one positive puncture. In fact the Gromov-Floer boundary of the moduli space of closed disks is the product of the two corresponding moduli spaces.    

Theorem \ref{Thm:C1boundary} asserts that the compactified moduli space admits a natural structure as a smooth manifold with boundary. Here we will deal with a preliminary step towards this goal, namely to describe the smooth structures on the factors of the boundary. For the moduli space of Floer disks with a negative puncture this is immediate from the discussion in Section \ref{app:CRFredholm} since in this case the moduli space of solutions is the inverse image of the $0$-section under the Fredholm map $\bar\pa_{\mathrm{F}}$. In order to simplify constructions in later sections we will sometimes use the following slight reformulation of the Floer operator. Recall that we defined the spaces $\XX_{0,1}$ and $\YY_{0,1}$ as Sobolev spaces on the disk with a variable puncture at $\zeta\in\pa D$. Here we define the spaces $\XX_{1}$ and $\YY_1$ as the corresponding spaces but with the puncture fixed at $-1\in\pa D$. For fixed $\zeta\in\pa D$ we let $\gamma_{\zeta,r}$ denote the pull-back of the the $1$-form $\gamma_{r}$ under multiplication by $-\zeta$, viewed as an automorphism of the disk. Then precomposing with such rotations we find that the Floer equation
\begin{equation}\label{eq:floer+rot}
(du+\gamma_{r,\zeta}\otimes X_{H})^{0,1}=0
\end{equation}
on $D_{-1}$ with negative puncture at $-1$ is equivalent to the Floer equation
\[
(du+\gamma_{r}\otimes X_{H})^{0,1}=0
\]
on $D_{\zeta}$ with negative puncture at $\zeta$. We will use the configuration space $\XX_{1}\times\pa D\times[0,\infty)$ for Floer disks with one negative puncture and get the resulting solution space
\[
\FF_{1}=\bar\partial_{\mathrm{F}}^{-1}(0_{\YY_{1}}),
\]
where $\bar\partial_{\mathrm{F}}$ is the operator in \eqref{eq:floer+rot}. Note that precomposition with rotation gives a natural identification with this solution space as defined originally. The general properties of the linearization $D\bar\partial_{\mathrm{F}}$ discussed previously continue to hold  in this slightly modified setup. 

In the case of holomorphic disks with one positive puncture it is less straightforward to define a smooth structure: it is natural to define the moduli space as the quotient of $\bar\pa^{-1}(0)$ by the group $G_{1}$ of conformal automorphisms of the disk $D$ that fix $1\in\pa D$. However, $G_{1}$ does not act with any uniformity on the configuration space since it distorts the underlying Sobolev norm by an arbitrarily large amount, and thus the quotient does not inherit a smooth structure. To equip the moduli space of holomorphic disks with one positive puncture with a smooth structure we instead adopt a gauge fixing strategy, which we describe next.

Write $a_{\pm}$ for the two preimages of the double point in $\Sigma$. Let $S_{\epsilon}(a^{\pm})\subset \Sigma$ denote spheres of radius $\epsilon>0$ around $a_{\pm}$ bounding balls $B_{\epsilon}(a^{\pm})$. We will fix gauge by marking two points $\pm i\in\pa D$ by requiring that our holomorphic disks $u\colon D\to \C^{n}$ with boundary on $f\colon \Sigma\to\C^{n}$ with positive puncture at $1\in\pa D$  satisfy $u(\mp i)\in S_{\epsilon}(a^{\pm})$. Furthermore, to avoid over-counting, we require that the open boundary arc between $1$ and $\pm i$ maps to the interior of $B_{\epsilon}(a^{\mp})$. The resulting maps are stable, and we will see below that their moduli space can be identified with the zero-set of a Fredholm section of a Banach bundle, giving an induced smooth structure.  The main point of the following discussion is to show that the above gauge-fixing requirements do not exclude any solutions. 

We start by considering the rate at which holomorphic disks approach the double point. Recall from Remark \ref{rem:Fourierexpansion} that any (Floer) holomorphic disk with a positive puncture at the double point $a$ has a Fourier expansion
\begin{equation}
u(\tau+i\sigma)=\sum_{k\ge 0} c_k e^{-(k+\frac12)\pi(\tau+i\sigma)}, \quad c_k\in\R^{n}
\end{equation}
in half-strip coordinates around the puncture, which we take to lie at $1\in \pa D$. Our next result shows that generically the first Fourier coefficient is non-zero; we remark that the vanishing or non-vanishing of this coefficient is independent of the choice of half-strip neighborhood,  and is invariant under the reparametrization action of $G_{1}$.

\begin{Lemma}\label{Lem:nondegasympt}
For generic $J\in\JJ_{\Sigma}$, if $u\in\bar\pa^{-1}(0) \subset \XX_{1,0}$ then the leading Fourier coefficient $c_0\in\R^{n}$ of its Fourier expansion near its positive puncture is non-zero.
\end{Lemma}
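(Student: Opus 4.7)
The plan is to realize the vanishing of $c_0$ as a codimension-$n$ condition in a parametric moduli space, then rule it out via Sard--Smale. I would form the universal moduli space
\[
\widetilde{\MM}\;=\;\{(u,J)\in\XX_{1,0}\times\JJ_\Sigma : \bar\pa_J u=0\},
\]
pass to the marked-point gauge slice of Section \ref{ssec:1+gauge} so as to fix the $G_1$-ambiguity, and observe that the resulting Banach manifold fibers over $\JJ_\Sigma$ with fiber $\MM_J$ of dimension $n-1$ over regular values, via the somewhere-injective branch-counting argument from \cite[Lemma 4.5]{EES} cited at the end of Section \ref{app:CRFredholm}. Since $J=J_0$ and the Lagrangian branches are the flat subspaces $\R^n\cup i\R^n$ in a fixed neighborhood of the double point, the Fourier expansion of Remark \ref{rem:Fourierexpansion} depends smoothly on $u$ and defines a smooth evaluation
\[
\mathrm{ev}_0\colon\widetilde{\MM}\to\R^n,\qquad (u,J)\mapsto c_0(u).
\]

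The main step is to prove that $\mathrm{ev}_0$ is transverse to $\{0\}\subset\R^n$. Fix $(u,J)$ with $c_0(u)=0$ and consider the combined linear map
\[
L(v,\eta)\;=\;\bigl(D\bar\pa_J(v)+(\delta_\eta J)(du)^{0,1},\;d_0(v)\bigr)\in\YY_{1,0}\oplus\R^n.
\]
Surjectivity of $L$ onto $\YY_{1,0}\oplus\R^n$ is equivalent to transversality of $\mathrm{ev}_0$. By duality, any $(z,w)\in\YY_{1,0}^{\ast}\oplus(\R^n)^{\ast}$ annihilating the image satisfies (i) $\langle z,(\delta_\eta J)(du)^{0,1}\rangle_{L^2}=0$ for all admissible $\eta$, and (ii) $\langle z,D\bar\pa_J(v)\rangle_{L^2}+\langle w,d_0(v)\rangle=0$ for all $v$. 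For $\delta$ small enough, a monotonicity-type argument shows that $u(D)$ cannot be contained in the $\delta$-neighborhood of $f(\Sigma)$, so there is an interior point $z_0\in D$ with $du(z_0)\neq 0$ and $u(z_0)$ outside this neighborhood. Localizing $\eta$ near $z_0$ and varying over all directions in $T_J\JJ_\Sigma$, (i) forces $z(z_0)=0$, and unique continuation for the formal adjoint then yields $z\equiv 0$. The residual identity $\langle w,d_0(v)\rangle=0$ for all $v\in T_u\XX_{1,0}$ now forces $w=0$: take $v$ to be a smooth cutoff, supported in a half-strip neighborhood of the puncture, of the holomorphic mode $w\,e^{-\pi(\tau+i\sigma)/2}$, which satisfies the linearized boundary conditions and realizes $d_0(v)=w$.

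With transversality established, $\mathrm{ev}_0^{-1}(0)$ is a Banach submanifold of $\widetilde{\MM}$, and its projection to $\JJ_\Sigma$ is Fredholm of index $(n-1)-n=-1$. Sard--Smale then yields a residual set of $J\in\JJ_\Sigma$ for which the fiber is empty, as required. The main obstacle is the transversality step: the restriction that $J$-variations vanish on a neighborhood of $f(\Sigma)$ means one cannot perturb freely everywhere, so one must combine the somewhere-injective input of \cite[Lemma 4.5]{EES} with the monotonicity-based trapping bound to locate an interior point where $J$ is unconstrained and $du$ is nondegenerate.
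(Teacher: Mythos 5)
Your argument is correct in outline but takes a genuinely different route from the paper's, and the paper's route is cleaner. Both proofs identify the same index count, $(n-1)-n=-1$, and both invoke Sard--Smale to conclude emptiness. The difference is in how the codimension-$n$ condition $c_0=0$ is encoded. You realize it as the zero set of an evaluation map $\mathrm{ev}_0:\widetilde{\MM}\to\R^n$ on the universal moduli space, which forces you to prove a separate transversality statement for $\mathrm{ev}_0$. The paper instead replaces $\sblv^2$ by a weighted Sobolev space $\sblv^2_\delta$ with $\delta\in(\tfrac{\pi}{2},\tfrac{3\pi}{2})$; because $f$ has all K\"ahler angles $\tfrac{\pi}{2}$ at the double point, $\bar\pa$ is Fredholm on this space and the exponential weight shifts the index down by exactly $n$ (citing \cite[Proposition 6.16]{EES1}). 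The $c_0=0$ locus is then simply $\bar\pa^{-1}(0)$ for the same section on a different domain, and the transversality argument is the one the paper already needs for the unweighted problem. Nothing new must be verified about an evaluation map.

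In your extra transversality step, the justification that the cokernel element $z$ vanishes is where you are most exposed. Invoking a ``monotonicity-type argument'' to guarantee that $u(D)$ escapes the $\delta$-neighborhood of $f(\Sigma)$ is not quite right: solutions with $c_0=0$ decay faster at the puncture and the quantitative form of such a trapping bound is not obvious, and in any case the paper's cited transversality mechanism (\cite[Lemma 4.5 (1)]{EES}) works differently, by perturbing $J$ in a small region near the positive puncture and exploiting the explicit asymptotics to count branches -- it does not rely on the disk escaping a global tubular neighborhood. You also implicitly need a somewhere-injective point of $u$ (not merely a point with $du(z_0)\neq 0$) in order to localize the $J$-perturbation; this too is exactly what the branch-counting input of \cite[Lemma 4.5 (1)]{EES} supplies. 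The step showing $w=0$ via a cut-off of the mode $w\,e^{-\pi(\tau+i\sigma)/2}$ is fine: this mode satisfies the alternating $\R^n/i\R^n$ boundary conditions and has leading coefficient $w$, so $d_0$ is onto. In short, your strategy works if you replace the monotonicity heuristic with the precise input from \cite[Lemma 4.5 (1)]{EES}, but the weighted-Sobolev reformulation is the more economical argument because it collapses the evaluation-map transversality into the Fredholm setup itself.
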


\begin{proof}
Fix strip coordinates $\tau+it\in[0,\infty)\times[0,1]$ on a neighborhood of the puncture.  Introduce a weight function $w_{\delta}$ such that $w_{\delta}(\tau+it)=e^{\delta|\tau|}$ near the puncture, with $\frac{\pi}{2}<\delta<\frac{3\pi}{2}$.  
The space $\XX_{1,0;\delta}$ is defined in the same way as $\XX_{1,0}$, but using  the weighted Sobolev space $\sblv^{2}_{\delta}(D_{1},\C^{n})$ with weight function $w_{\delta}$ in place of $\sblv^{2}(D_{1},\C^{n})$.  Similarly, we define $\YY_{1,0;\delta}$ in the same way as $\YY_{1,0}$ but replacing the constant weight function with $w_{\delta}$, yielding a Banach bundle $\overline{\YY_{1,0;\delta}}\to \overline{\XX_{1,0;\delta}}$. 

Recalling that the immersion $f$ has standard form with all complex (or K\"ahler) angles equal to $\frac{\pi}{2}$ at the double point, we find that the section $\bar\pa\colon \overline{\YY_{1,0;\delta}}\to\overline{\XX_{1,0;\delta}}$ is Fredholm. However, the positive exponential weight lowers the Fredholm index by $n$, see e.g.~\cite[Proposition 6.16]{EES1}. Hence, noting that the action of $G_{1}$ preserves $\sblv^{2}_{\delta}$, the expected dimension of the corresponding moduli space is $(n-1)-n=-1$, which means that for a generic almost complex structure this problem has no solutions. In terms of ordinary holomorphic disks, we find that for a generic almost complex structure the first Fourier coefficient of any solution is non-zero.
\end{proof}

Fix an almost complex structure as in Lemma \ref{Lem:nondegasympt} and let $u\colon D_{1}\to \C^{n}$ be a holomorphic disk. For small $\epsilon>0$ let $p^{\pm}\in\pa D$ be the points closest to the puncture $1$ such that $u(p^{\pm})\in S_{\epsilon}(a^{\pm})$.

\begin{Lemma}\label{l:smallsphere}
For all sufficiently small $\epsilon>0$ any holomorphic disk $u$ intersects $S_{\epsilon}(a^{\pm})$ transversely at $u(p^{\pm})$.
\end{Lemma}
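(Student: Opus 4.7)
The plan is to exploit the Fourier expansion at the positive puncture together with Lemma~\ref{Lem:nondegasympt}. First I fix an almost complex structure satisfying the conclusion of that lemma, so for every holomorphic disk $u$ in the moduli space the leading coefficient $c_0 \in \mathbb{R}^n$ in the Fourier expansion
\[
u(\tau+i\sigma) = \sum_{k\ge 0} c_k e^{-(k+\frac12)\pi(\tau+i\sigma)}, \qquad \tau+i\sigma\in [0,\infty)\times[0,1],
\]
is non-zero, where $[0,\infty)\times[0,1]$ are half-strip coordinates around $1\in\partial D$. Restricting to the two boundary arcs gives
\[
u(\tau) = c_0\, e^{-\pi\tau/2} + O\!\left(e^{-3\pi\tau/2}\right) \in \mathbb{R}^n,\qquad
u(\tau+i) = \pm i\, c_0\, e^{-\pi\tau/2} + O\!\left(e^{-3\pi\tau/2}\right) \in i\mathbb{R}^n,
\]
so each boundary arc approaches the double point $0\in\mathbb{C}^n$ along a definite ray determined by $c_0$.

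Next I translate the condition $u(p^{\pm})\in S_{\epsilon}(a^{\pm})$ into a condition on $\tau$. By the normalization of the immersion $f$ near its double point, $f$ sends the standard flat metric on the two branches of $\mathbb{R}^n\cup i\mathbb{R}^n$ isometrically to the chosen metric on $\Sigma$ near $a^{\pm}$. Thus the sphere $S_{\epsilon}(a^{\pm})\subset \Sigma$ corresponds, under the lift $\tilde u$, to the Euclidean sphere of radius $\epsilon$ in the appropriate branch, and the condition on $p^{\pm}$ becomes simply $|u(\tau_{\epsilon})| = \epsilon$ on that boundary arc.

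Finally I check the transversality by computing the derivative of $\tau\mapsto|u(\tau+i\sigma_{0})|^{2}$ on the relevant arc. Termwise differentiation of the Fourier series and the fact that $c_{0}\neq 0$ yield
\[
\frac{d}{d\tau}|u(\tau+i\sigma_{0})|^{2} = -\pi|c_{0}|^{2} e^{-\pi\tau} \bigl(1+O(e^{-\pi\tau})\bigr),
\]
which is strictly negative for all sufficiently large $\tau$. Equivalently, for all sufficiently small $\epsilon>0$, the radial distance to $a^{\pm}$ along the boundary curve is strictly monotone in $\tau$ near $\tau_{\epsilon}$, so the tangent vector to $\tilde u|_{\partial D}$ at $p^{\pm}$ is not tangent to the codimension-one sphere $S_{\epsilon}(a^{\pm})$. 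This is the transversality claim.

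The main obstacle is not analytical — it is a routine derivative computation — but conceptual: one must be careful that the uniform smallness of $\epsilon$ can be chosen independently of $u$ within the compact moduli space $\MM$. This follows from the compactness statement of Lemma~\ref{Lem:holdisks1puncture}, which guarantees uniform lower bounds on $|c_{0}|$ and uniform control on the higher Fourier tail, so that a single $\epsilon_{0}>0$ works for every disk in $\MM$.
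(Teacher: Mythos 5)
Your proof is correct and rests on the same two pillars as the paper's: the nonvanishing of the leading Fourier coefficient $c_0$ from Lemma~\ref{Lem:nondegasympt}, and compactness of the moduli space. The paper phrases it as a proof by contradiction (a sequence of non-transverse intersections would limit to a solution with $c_0=0$), whereas you compute the radial derivative $\frac{d}{d\tau}|u(\tau)|^2=-\pi|c_0|^2e^{-\pi\tau}(1+O(e^{-\pi\tau}))$ directly and invoke compactness for uniformity of the bounds; these are two expositions of the same argument, with yours making explicit the quantitative estimate the paper's contradiction implicitly relies on.
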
    

\begin{proof}
Consider a sequence of solutions with $u(p^{+})$ not transverse to $S_{\epsilon}(a^{+})$ as $\epsilon\to 0$. As the moduli space is compact we find a limiting solution $u_\infty$. If the first Fourier coefficient of $u_{\infty}$ is non-zero then there exists $\epsilon_0>0$ such that $u_{\infty}$ meets $S_{\epsilon}(a^{+})$ transversely for all $\epsilon\in(0,\epsilon_{0}]$. This contradicts the non-transversality  of the corresponding intersections of the solutions in the sequence, so we conclude that the first Fourier coefficient of $u_{\infty}$ must vanish. This  contradicts Lemma \ref{Lem:nondegasympt}.
\end{proof}

We next control other intersections with $S_{\epsilon}(a^{\pm})$. Consider the evaluation map $\ev\colon \XX_{1,0}\to\Sigma$ at the point $-1\in\pa D$ (note this is the opposite point on the boundary to the puncture which is fixed at $1\in\partial D$) and consider the subset $\ev^{-1}(a^{+})\cup\ev^{-1}(a^{-})$. The Fredholm section restricted to these submanifolds has index $1$, and thus generically there  are finitely many solutions which pass through one of the Reeb chord endpoints. It then follows from Lemma \ref{l:smallsphere} that there exists $\epsilon>0$ such that the intersection points closest to $1$ are transverse and such that the following holds. If $u(p)\in B_{\epsilon}(a^{+})$ (respectively $u(p)\in B_{\epsilon}(a^{-})$) and if there is another intersection point $u(p')\in S_{\epsilon}(a^{+})$ (resp. $u(p')\in S_{\epsilon}(a^{-})$) such that the order of these points and the puncture on $\pa D$ is $1,p',p$ (resp. is $p,p',1$), then some point $p''\in\pa D$ between $p'$ and $p$ (resp. between $p$ and $p'$) maps outside $B_{2\epsilon}(a^{+})$ (resp. outside $B_{2\epsilon}(a^{-})$), i.e. $u(p'')\notin B_{2\epsilon}(a^{\pm})$. 

With this established we now define the smooth structure on the moduli space. Let $\XX_{1,0}'\subset \XX_{1,0}$ denote the configuration space of maps $u$ with $u(\pm i)\in S_{\epsilon}(a^{\mp})$ and such that no point in the arc in $\pa D$ of length $\frac{\pi}{2}$ between $1$ and $\pm i$ maps outside $B_{\epsilon}(a^{\pm})$. The tangent space $T_{u}\XX_{1}'\subset T_{u}\XX_{1}$ is the codimension $2$ subspace of vector fields  tangent to $S_{\epsilon}(a^{\pm})$ at $\mp i$. We now define the moduli space as    
\begin{equation} \label{Eqn:MMsmooth}
\MM=\bar\pa^{-1}(0)\cap \XX'_{1,0}
\end{equation}
and observe that $\MM$ then comes equipped with a smooth structure provided the restriction of $\bar\pa$ to $\XX'_{1,0}$ is transverse to $0$. The argument discussed above, perturbing the almost complex structure near the positive puncture, still works for $\XX_{1,0}'$, and thus we get a smooth structure on $\MM$.  \emph{A priori}, this smooth structure depends on the choice of gauge condition. Using the fact that the $C^{0}$-norm of a holomorphic map controls all other norms, it is not hard to see that different gauge conditions lead to the same smooth structure. This, however, will not play any role in our main application and will therefore not be discussed further.  

\subsection{Further remarks on Gromov-Floer convergence}\label{ssec:gageandconv}
It will be important to have more detailed information on Gromov-Floer convergence of sequences in $\FF_{0}$. Fix small spheres $S_{\epsilon}(a^{\pm})$ as in Section \ref{ssec:1+gauge}. 
For any $M>0$, the open set
\[
U_M=\{u\in\FF_{0}\colon \sup_{z\in D}|du(z)|>M\}
\]  
is a neighborhood of the Gromov-Floer boundary of $\FF_{0}$. 
For $M>0$ and $u\in\FF_{0}$, let $B^{(1)}(u,M)=|du|^{-1}([M,\infty))\subset D$ and let $\delta(u,M)=\diam(B^{(1)}(u,M))$.

\begin{Lemma}\label{Lem:diamblowup} 
Let $\epsilon_M=\sup_{u\in U_M}\delta(u,M)$. Then $\epsilon_M\to 0$ as $M\to\infty$.
\end{Lemma}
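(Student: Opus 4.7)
The plan is to argue by contradiction and appeal to Gromov--Floer compactness, together with the fact that the Gromov--Floer compactification of $\FF_{0}$ is precisely $\FF_{0}\cup (\FF_{1}\times\MM)$, with only one bubble point forming in any limit.

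Suppose the conclusion fails. Then there exist $\epsilon_{0}>0$, a sequence $M_{k}\to\infty$, a sequence $u_{k}\in U_{M_{k}}$, and pairs of points $z_{k},w_{k}\in B^{(1)}(u_{k},M_{k})$ with
\[
|du_{k}(z_{k})|\ge M_{k},\qquad |du_{k}(w_{k})|\ge M_{k},\qquad d(z_{k},w_{k})\ge \tfrac{1}{2}\epsilon_{0}.
\]
By Lemma \ref{Lemma:energy} the family $\{u_{k}\}$ has uniformly bounded $L^{2}$-energy. Since $\overline{D}$ is compact, after passing to a subsequence we may assume that $z_{k}\to z^{\ast}$ and $w_{k}\to w^{\ast}$ in $\overline{D}$, with $d(z^{\ast},w^{\ast})\ge \tfrac{1}{2}\epsilon_{0}$.

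Next I would invoke Gromov--Floer compactness: after passing to a further subsequence, $u_{k}$ converges in the Gromov--Floer sense to a broken configuration. Because $\FF_{q}=\varnothing$ for $q>1$ (Lemma \ref{Lem:actualdimensions}) and because holomorphic spheres in $\C^{n}$ are constant by exactness, the only possible strata of $\overline{\FF}_{0}$ at which a non-trivial limit can lie are $\FF_{0}$ itself and $\FF_{1}\times\MM$. If the limit lay in $\FF_{0}$, convergence would be smooth on $\overline{D}$ and $|du_{k}|$ would be uniformly bounded, contradicting $\sup|du_{k}|\ge M_{k}\to\infty$. Therefore the limit is some $(u^{-},u^{+})\in \FF_{1}\times\MM$, with a single bubbling point $p\in\pa D$.

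The final step is standard: away from $p$, the convergence $u_{k}\to u^{-}$ is $C^{\infty}_{\mathrm{loc}}$ (bubbles can only form at a point of energy concentration, which in this stratum is exactly the puncture of $u^{-}$). Consequently, for any compact subset $K\subset \overline{D}\setminus\{p\}$, $\sup_{K}|du_{k}|$ is uniformly bounded in $k$. Since $d(z^{\ast},w^{\ast})\ge \tfrac{1}{2}\epsilon_{0}$, at least one of $z^{\ast},w^{\ast}$ is distinct from $p$; but then $|du_{k}|$ is bounded at the corresponding sequence, contradicting $|du_{k}(z_{k})|\ge M_{k}\to\infty$ (respectively $|du_{k}(w_{k})|\ge M_{k}$). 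This contradiction proves $\epsilon_{M}\to 0$.

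The main obstacle is a clean invocation of Gromov--Floer compactness in the precise form needed here, namely smooth convergence away from a unique bubble point. This is a standard consequence of the bubbling analysis, but one must rule out multi-bubble degenerations (handled by the dimension count $\FF_{q>1}=\varnothing$) and sphere bubbles (handled by exactness); once these are out of the way the argument is essentially a continuity/compactness statement in the $C^{\infty}_{\mathrm{loc}}$-topology off the bubble locus.
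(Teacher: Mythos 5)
Your proposal is correct and is essentially the paper's argument: the paper's proof is the same contradiction in compressed form, noting that a failure of $\epsilon_M\to 0$ would force bubbling at two separated points, which is incompatible with Lemma \ref{lemma:weakmoduli} (the Gromov--Floer boundary being $\FF_1\times\MM$, i.e.\ breaking concentrated at a single boundary point). Your longer write-up just makes explicit the $C^\infty_{\mathrm{loc}}$ convergence away from the unique bubble point and the exclusion of sphere bubbles and multi-bubble strata that the paper leaves to that cited lemma.
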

\begin{proof}
If not, there is a sequence of disks in $\FF_0$ with holomorphic disk bubbles forming at (at least) two points. This contradicts Lemma \ref{lemma:weakmoduli}.
\end{proof}

It follows from Lemma \ref{Lem:diamblowup} that for each $\delta_0>0$ there is $M>0$ such that for every $u\in U_{M}$, $\delta(u,M)<\delta_0$. Let $I_{u}\subset \pa D$ be the smallest interval containing $B^{(1)}(u,M)\cap\pa D$, and let $\zeta_{u}$ denote the midpoint of $I_{u}$.

\begin{Lemma}\label{Lemma:mpnearsplit}
For all sufficiently large $M>0$, if $u\in U_M$ then there are points $\zeta_{\pm}$ in $I_{u}$ such that $u(\zeta_{\pm})\in S_{\epsilon}(a^{\pm})$. 
\end{Lemma}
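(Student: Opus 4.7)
The plan is to argue by contradiction using Gromov--Floer compactness. Suppose the conclusion fails: then there exist sequences $M_k\to\infty$ and $u_k\in U_{M_k}$ such that (without loss of generality) $I_{u_k}$ contains no preimage of $S_\epsilon(a^+)$ under $u_k$. By Lemma \ref{lemma:weakmoduli}, after passing to a subsequence $u_k$ Gromov--Floer converges to a broken configuration $(u^-,u^+)\in\FF_1\times\MM$, with the bubble $u^+$ forming at a single boundary point $z_\infty\in\pa D$ (single by Lemma \ref{Lem:diamblowup}). The standard bubble-rescaling analysis produces conformal embeddings $\iota_k\colon D\setminus\{1\}\to D$ shrinking onto $z_\infty$ at some scale $r_k\to 0$, such that the rescaled maps $v_k:=u_k\circ\iota_k$ converge to $u^+$ in $C^1_{\mathrm{loc}}(D\setminus\{1\})$.

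By the gauge-fixing condition on $\MM$ from Section \ref{ssec:1+gauge}, $u^+(-i)\in S_\epsilon(a^+)$, and Lemma \ref{l:smallsphere} ensures this intersection is transverse. Hence, by $C^1$-convergence and transversality, for large $k$ there exist $\xi_k\to -i$ in $\pa D\setminus\{1\}$ with $v_k(\xi_k)\in S_\epsilon(a^+)$. Setting $\zeta^+_k:=\iota_k(\xi_k)\in\pa D$ gives $u_k(\zeta^+_k)\in S_\epsilon(a^+)$. A contradiction will therefore follow once we establish $\zeta^+_k\in I_{u_k}$ for large $k$.

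The principal obstacle---and the core of the argument---is this last step, which requires a careful analysis of how the set $B^{(1)}(u_k,M_k)\cap\pa D$ sits in relation to $\zeta^+_k$. By the chain rule and the bubble rescaling, $|du_k(\iota_k(w))|\asymp |du^+(w)|/r_k$ on compact subsets of $D\setminus\{1\}$, so the pull-back of $B^{(1)}(u_k,M_k)\cap\pa D$ through $\iota_k$ is (asymptotically) the sub-level set $\{w\in\pa D\setminus\{1\}:|du^+(w)|\geq M_k r_k\}$. We then combine three inputs: (a) the maximum principle for holomorphic maps, which places the supremum of $|du^+|$ on $\pa D\setminus\{1\}$, so the pulled-back set is nonempty on the boundary; (b) compactness of $\MM$ together with the uniform transversality in Lemma \ref{l:smallsphere}, which provides a uniform lower bound $c_0>0$ on $|du^+|$ on a fixed neighborhood $N$ of $\pm i$ in $\pa D\setminus\{1\}$; and (c) the single-bubble convergence (Lemma \ref{Lem:diamblowup}) combined with the non-degenerate leading asymptotics (Lemma \ref{Lem:nondegasympt}), which control the effective threshold $M_k r_k$ and force it below $c_0$ for $k$ large. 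Together, (a)--(c) show that the pulled-back set contains a neighborhood of $-i$ with points on both sides; under $\iota_k$ this produces points of $B^{(1)}(u_k,M_k)\cap\pa D$ flanking $\zeta^+_k$, so $\zeta^+_k$ lies in the smallest enclosing interval $I_{u_k}$. The analogous argument for $S_\epsilon(a^-)$ yields $\zeta^-_k\in I_{u_k}$, completing the contradiction. The delicate coordination of the rescaling estimate in (c) with the threshold $M_k$, in the one-bubble regime, is the principal technical subtlety.
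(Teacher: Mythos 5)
Your overall strategy has a genuine gap, and it sits exactly where you flagged the ``principal technical subtlety'': the claim in step (c) that the effective threshold $M_k r_k$ can be forced below the uniform lower bound $c_0$ for $|du^+|$ near $\pm i$. Nothing in Lemma \ref{Lem:diamblowup} or Lemma \ref{Lem:nondegasympt} relates $M_k$ to the bubbling scale $r_k$. Membership $u_k\in U_{M_k}$ only says $\sup_D|du_k|>M_k$, and in the contradiction argument the pair $(M_k,u_k)$ is adversarial: $M_k$ may be arbitrarily close to $\sup_D|du_k|\sim r_k^{-1}$, in which case $M_k r_k$ is of the order of $\max|du^+|$ rather than small. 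In that regime the set $B^{(1)}(u_k,M_k)\cap\pa D$ concentrates near the points where the bubble's gradient is maximal (which may even be interior points, so the boundary trace of the blow-up set can be tiny), and there is no reason for it to flank your points $\zeta^{\pm}_k$; so $\zeta^{\pm}_k\in I_{u_k}$ does not follow. The appeal to a ``maximum principle'' for $|du^+|$ in (a) is also not available: the maximum principle applies to the harmonic/holomorphic components of the map, not to the norm of its differential. More structurally, your argument tries to prove a stronger statement than the lemma asserts, namely that the particular crossing points inherited from the gauge-fixed marked points of the bubble lie in $I_u$; the lemma only requires \emph{some} crossing points in $I_u$, and the stronger statement is exactly what fails in the regime $M_k\approx\sup|du_k|$.

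The paper's proof avoids all quantitative comparison between $M$ and the bubbling scale by a soft parity argument on the other component of the limit. Assuming the lemma fails for the sphere $S_{\epsilon}(a^{\sigma})$, one takes $u_j\in U_{M_j}$ with $u_j(I_{u_j})\cap S_{\epsilon}(a^{\sigma})=\varnothing$, lets $\zeta_{u_j}\to\zeta$, and uses Gromov--Floer compactness (together with Lemma \ref{lemma:weakmoduli} and Lemma \ref{Lem:diamblowup}) to extract a limit $\tilde u\in\FF_1$ on compact subsets of $D-\zeta$. The boundary lift of $\tilde u$ is a path in $\Sigma$ running from $a^{+}$ to $a^{-}$, hence has intersection number $\pm 1$ with the separating sphere $S_{\epsilon}(a^{\sigma})$, whereas the boundary lift of each $u_j$ is a \emph{closed} loop in $\Sigma$ and so has intersection number $0$ with that sphere. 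Since $u_j$ converges to $\tilde u$ away from $\zeta$ and $I_{u_j}$ shrinks to $\zeta$, the portion of the boundary loop outside $I_{u_j}$ carries intersection number $\pm1$ for large $j$, so the portion over $I_{u_j}$ must meet $S_{\epsilon}(a^{\sigma})$, a contradiction. In short: the crossing inside $I_{u_j}$ is produced homologically from the $\FF_1$-component, not located explicitly via the bubble; if you want to repair your approach you would need to replace (a)--(c) by an argument of this parity type, since the quantitative control you posit is simply not available.
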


\begin{proof}
If the lemma does not hold then there is $\sigma \in \{+,-\}$ and  a sequence of solutions $u_j\in U_{M_{j}}$ with $M_{j}\to\infty$ such that $u_{j}(I_{u_j})\cap S_{\epsilon}(a^{\sigma})=\varnothing$.  Passing to a subsequence, we may assume that $\zeta_{u_j} \rightarrow \zeta$ converge. By Gromov-Floer compactness, a subsequence of $u_{j}$ converges to a solution $\tilde u$ in $\FF_{1}$ on compacts subsets of $D-\zeta$. It follows that the intersection number of $\tilde u(\pa D-\zeta)$ and $S_{\epsilon}(a^{\sigma})$ equals $\pm 1$.  Since the intersection number of any closed curve in $\Sigma$ and $S_{\epsilon}(a^{\sigma})$ equals $0$, we find that there is $\zeta_{\sigma}\in I^{u_j}$ such that $u_{j}(\zeta_{\sigma})\in S_{\epsilon}(a^{\sigma})$ for $j$ large enough. The lemma follows. 
\end{proof}

Let $H$ denote the upper half-plane. Given $u\in U_M$, let $H_{\delta}\subset H$ be a half-disk parameterizing a small neighborhood in $D$ centered on $\zeta_u$. 
Let $\zeta_{\pm}$ be the points in $\pa H_{\delta}$ most distant from the origin satisfying $u(\zeta_{\pm})\in S_{\epsilon}(a^{\pm})$. Let $\alpha=\frac{\zeta_{+}-\zeta_{-}}{2}$ and $\beta=\frac{\zeta_{+}+\zeta_{-}}{2}$. The map 
\[
\psi\colon H\to H, \quad \psi(z)=\alpha z+\beta \quad \textrm{satisfies}  \ \psi(-1)=\zeta_{+}, \ \ \psi(1)=\zeta_{-}.
\]
 Lemma \ref{Lem:diamblowup} implies that $\alpha, \beta\to 0$ as $M\to\infty$. Let $\Omega_{\rho}=\psi^{-1}(H_{\delta})\subset H$, a subset of diameter $\rho = \mathcal{O}(1/\delta)$. 

\begin{Lemma}\label{Lem:limit1}
For any sequence of maps $u_l\in U_{M_l}$ with $M_l\to\infty$, the sequence $u_{l}\circ\psi\colon\Omega_\rho\to\C^{n}$ has a subsequence that converges uniformly on compact subsets to a holomorphic disk $u\colon (H,\pa H)\to(\C^{n},\Sigma)$ with $u(-1)\in S_{\epsilon}(a^{+})$ and $u(1)\in S_{\epsilon}(a^{-})$, and which represents an element in $\MM$. 
\end{Lemma}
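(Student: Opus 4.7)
The plan is a standard rescaling/bubble-extraction argument in the Floer setting. First, I would show that on $\Omega_\rho$ each rescaled map $v_l=u_l\circ\psi_l$ is honestly $J_0$-holomorphic. Two ingredients give this: by Remark \ref{rem:Fourierexpansion}, the Hamiltonian perturbation $\gamma_{r_l}\otimes X_H$ vanishes in a fixed neighborhood of $\pa D$ (after passing to a subsequence in which $r_l$ is bounded, as in the proof of Lemma \ref{lemma:weakmoduli}), and the almost complex structure $J$ equals $J_0$ near $f(\Sigma)$. Since $\psi_l(\Omega_\rho)\subset H_\delta$ is a small half-disk centered on $\zeta_{u_l}\in\pa D$, for $\delta$ sufficiently small both conditions hold throughout $\psi_l(\Omega_\rho)$, so $\bar\pa_{J_0} v_l=0$ there.

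Next I would establish uniform derivative bounds on compact subsets of $H$. The energy of $v_l$ equals that of $u_l|_{H_\delta}$ by conformal invariance of the Dirichlet energy and is therefore uniformly controlled via Lemma \ref{Lemma:energy}. If $|dv_l|$ were to concentrate on some compact $K\subset H$, a further bubble-rescaling at a maximum would produce a non-constant $J_0$-holomorphic sphere or disk: spheres are excluded by exactness of $\omega$, and any non-constant disk with boundary on $f(\Sigma)$ would by Stokes carry at least the action $\ell(a)$ associated to the unique Reeb chord. The oriented boundary intersection count of $v_l$ with $S_\epsilon(a^\pm)$, combined with the choice of $\zeta_\pm$ as the most distant such intersection points from the origin (Lemma \ref{Lemma:mpnearsplit} together with the paragraph following Lemma \ref{l:smallsphere}), forces any secondary bubble to absorb the chosen crossings, contradicting the maximality of $|\zeta_\pm|$ and the bound on $\delta(u,M)$ from Lemma \ref{Lem:diamblowup}. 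Elliptic regularity for $\bar\pa_{J_0}$ then extracts a subsequence converging in $C^\infty_{\mathrm{loc}}$ on $H$ to a $J_0$-holomorphic map $v\colon(H,\pa H)\to(\C^n, f(\Sigma))$.

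Finally I would identify $v$ as an element of $\MM$. By construction and pointwise convergence at $\pm 1$, $v(-1)\in S_\epsilon(a^+)$ and $v(1)\in S_\epsilon(a^-)$, so $v$ is non-constant. Its finite energy, together with the exponential Fourier decay at a Reeb chord puncture recorded in Remark \ref{rem:Fourierexpansion}, lets a boundary removable-singularity argument at $\infty\in\pa H$ extend $v$ to a holomorphic disk with a single positive puncture at the double point $a$. Under the appropriate conformal equivalence $H\simeq D$ sending $\infty\mapsto 1$ and $\mp 1\mapsto \pm i$, the marked-point conditions defining the gauge representative of $\MM$ in Section \ref{ssec:1+gauge} follow automatically from $v(\pm 1)\in S_\epsilon(a^\mp)$, so $v$ represents a point of $\MM$. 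The main obstacle here is the uniform derivative bound: one must verify that the particular rescaling $\psi_l$, which is keyed to the sphere-crossing points $\zeta_\pm$ rather than to the derivative maximum, nonetheless captures the unique bubble and prevents a degenerate limit, which ultimately reduces to showing that $|\zeta_+-\zeta_-|$ has the same scale as the inverse of the blow-up rate of $|du_l|$.
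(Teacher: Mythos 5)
Your overall shape — rescale, bound derivatives, extract a limit, identify it in $\MM$ — matches the paper, and your preliminary observations (the Hamiltonian term vanishes near $\pa D$ so the rescaled maps solve the unperturbed equation; the limit is non-constant because $S_\epsilon(a^+)\cap S_\epsilon(a^-)=\varnothing$) are correct. However, the step you yourself flag as the ``main obstacle'' is a genuine gap, and the mechanism you propose for closing it does not go through.

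Your argument for the uniform derivative bound invokes an ``oriented boundary intersection count'' forcing a secondary bubble ``to absorb the chosen crossings, contradicting the maximality of $|\zeta_\pm|$,'' but none of these assertions is established, and it is not clear they could be: a bubble forming at some $\xi\in\pa H$ need not carry the crossings with $S_\epsilon(a^\pm)$ at $\pm 1$, and the ``most distant from the origin'' choice of $\zeta_\pm$ does not by itself rule out further intersections closer to the center. The paper instead resolves this by using Lemma \ref{lemma:weakmoduli} as a dimension count: since $\FF_q=\varnothing$ for $q>1$, a Gromov--Floer limit of the $u_l$ contains at most one holomorphic disk bubble. If the rescaled sequence $u_l\circ\psi$ had a bubble at some $\xi\in\pa H$, that bubble would already be \emph{the} unique bubble, and so $u_l\circ\psi$ would converge to a \emph{constant} on compact subsets of $H-\{\xi\}$. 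That forces $u_l\circ\psi(\pm 1)$ to converge to the same constant, which is incompatible with $u_l\circ\psi(-1)\in S_\epsilon(a^+)$ and $u_l\circ\psi(1)\in S_\epsilon(a^-)$ and the disjointness of those spheres. This is the step your proposal is missing; no intersection-number argument or comparison of the scale $|\zeta_+-\zeta_-|$ to the blow-up rate is needed. Once the derivative bound is in hand, the identification of the limit with an element of $\MM$ also comes directly from Lemma \ref{lemma:weakmoduli} (the Gromov--Floer boundary of $\FF_0$ is $\FF_1\times\MM$), rather than via a separate removable-singularity and gauge-verification argument as you propose, though your route there is not wrong. A smaller inaccuracy: you claim the rescaled maps are $J_0$-holomorphic, but $J$ agrees with $J_0$ only near $f(\Sigma)$ and the image of $\psi_l(\Omega_\rho)$ need not be confined there; what actually matters is only that the Hamiltonian term vanishes near $\pa D$, so that one has honest $J$-holomorphic maps, which is enough.
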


\begin{proof}
If the derivative of $u_j\circ\psi$ is uniformly bounded we can extract a convergent subsequence, with limit which must be non-constant since $S_{\epsilon}(a^{+})\cap S(\epsilon)(a^{-})=\varnothing$. The lemma then follows from Lemma \ref{lemma:weakmoduli}. 

To see that the derivative must indeed be uniformly bounded, assume otherwise for contradiction. There is at least one point $\xi\in\pa H$ at which a bubble forms, and  a rescaling argument as above yields a non-constant holomorphic disk. Lemma \ref{lemma:weakmoduli} implies that this is the only bubble and hence $u_j\circ\psi$ must converge to a constant map on compact subsets of $H-\xi$. This however contradicts $S_{\epsilon}(a^{+})\cap S_\epsilon(a^{-})=\varnothing$.
\end{proof}

For $\zeta \in \partial D$, let $B(\zeta;\delta)$ denotes a $\delta$-neighborhood of $\zeta$ in $D$.

\begin{Corollary}\label{Cor:limit2}
For each $\delta>0$ and $\epsilon_0>0$, there exist $M>0$ and $\rho>0$ with the following properties.  For every $u\in U_M\subset\FF_{0}$ there is some $(u_1,u_2)\in\FF_{1}\times\MM$ satisfying:
\begin{enumerate}
\item Let $\zeta\in\pa D$ be  the puncture in domain of $u_1$. The $C^{1}$-distance between $u|_{D-B(\zeta;\delta)}$ and $u_1|_{D-B(\zeta;\delta)}$ is smaller than $\epsilon_0$.
\item Let $\zeta^{\pm}$ denote the intersection points of $u_2(\pa H)$ and $S_{\epsilon}(a^{\pm})$ closest to the puncture.  Consider the unique representative $u_2\colon H\to\C^{n}$ with $u_2(\mp 1) = \zeta^{\pm} \in S_{\epsilon}(a^{\pm})$. The map $u\circ\psi$ is defined on $\Omega_{\rho}$ and the $C^{1}$-distance between $u\circ\psi$ and $u_2|_{\Omega_{\rho}}$ is smaller than $\epsilon_0$.
\end{enumerate}
\end{Corollary}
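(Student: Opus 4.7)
The plan is to argue by contradiction, combining the Gromov-Floer compactness from Lemma~\ref{lemma:weakmoduli} with the rescaling convergence of Lemma~\ref{Lem:limit1}, and then upgrading uniform-on-compacts convergence to $C^1$-closeness via elliptic bootstrap. Suppose the conclusion fails. Then there exist $\delta>0$ and $\epsilon_0>0$ together with sequences $M_l\to\infty$ and $\rho_l\to\infty$ and maps $u_l\in U_{M_l}\subset\FF_0$ for which no pair $(u_1,u_2)\in\FF_1\times\MM$ satisfies (1) and (2) with these parameters. After passing to a subsequence, I would assume the bubble midpoints $\zeta_{u_l}\in\pa D$ of Lemma~\ref{Lemma:mpnearsplit} converge to some $\zeta_\infty\in\pa D$, and then apply Lemma~\ref{lemma:weakmoduli} to extract (along another subsequence) a Gromov-Floer limit $(\tilde u_1,\tilde u_2)\in\FF_1\times\MM$ with $\tilde u_1$ having its negative puncture at $\zeta_\infty$.

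For condition~(1), on the compact set $K=\overline{D\setminus B(\zeta_\infty;\delta/2)}$ Lemma~\ref{Lem:diamblowup} and the definition of $U_{M_l}$ give a uniform bound on $|du_l|$ restricted to $K$ for $l$ large. Since the perturbed Cauchy-Riemann equation \eqref{Eqn:CR-1} has smooth coefficients, standard elliptic bootstrap upgrades the $C^0$ convergence $u_l|_K\to\tilde u_1|_K$ to $C^\infty$ convergence. Using $\zeta_{u_l}\to\zeta_\infty$, we have $D\setminus B(\zeta_\infty;\delta)\subset K$ for $l$ large, whence $\|u_l-\tilde u_1\|_{C^1(D\setminus B(\zeta_\infty;\delta))}<\epsilon_0$, establishing~(1) with $u_1=\tilde u_1$.

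For condition~(2), I would apply the affine reparametrization $\psi_l$ from the construction preceding Lemma~\ref{Lem:limit1}, using the sphere-crossing points of $u_l$ with $S_\epsilon(a^\pm)$ in the interval $I_{u_l}$. Lemma~\ref{Lem:limit1} produces a subsequence along which $u_l\circ\psi_l$ converges uniformly on compact subsets of $H$ to a holomorphic disk $\tilde u_2\in\MM$ satisfying $\tilde u_2(\mp 1)\in S_\epsilon(a^\pm)$; elliptic bootstrap again promotes this convergence to $C^\infty$ on compacts. Choosing $l$ so large that $\Omega_{\rho_l}$ is contained in a fixed compact on which the $C^1$-distance between $u_l\circ\psi_l$ and $\tilde u_2$ is below $\epsilon_0$ yields~(2) with $u_2=\tilde u_2$. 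The pair $(\tilde u_1,\tilde u_2)$ then contradicts the assumed failure of the corollary.

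The main obstacle is the compatibility between two gauges: elements of $\MM$ are parametrized (per Section~\ref{ssec:1+gauge}) by fixing marked points $\pm i\in\pa D$ on $S_\epsilon(a^\mp)$, while condition~(2) presents $u_2$ on $H$ with $u_2(\mp 1)\in S_\epsilon(a^\pm)$. These parametrizations differ by a unique conformal equivalence $H\to D$ sending the sphere-crossing points to the marked points, which depends continuously on the underlying map provided its sphere-crossings are transverse. Transversality---and hence continuity---is ensured by Lemmas~\ref{Lem:nondegasympt} and~\ref{l:smallsphere}, so that the gauge-fixed representative of $\tilde u_2$ in $\MM$ coincides with the $C^1$ limit of the normalized rescalings $u_l\circ\psi_l$, closing the argument.
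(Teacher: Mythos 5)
Your proof is correct and follows essentially the same route as the paper: argue by contradiction, invoke Gromov--Floer compactness and Lemma~\ref{Lem:limit1} to extract a limiting pair $(\tilde u_1,\tilde u_2)\in\FF_1\times\MM$, and upgrade the convergence to $C^1$ on the relevant compact sets. The paper states this in a single sentence (``extract a sequence contradicting Lemma~\ref{Lem:limit1}''), and your write-up simply supplies the details---the elliptic bootstrap and the gauge-fixing compatibility---that make that sentence a full argument.
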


\begin{proof}
If $(1)$ or $(2)$ do not hold, then we extract a sequence contradicting Lemma \ref{Lem:limit1}. 
\end{proof}

\subsection{Overview}  \label{Ssec:Overview}
Before going into further analytical details we give an overview of the argument underlying Theorem \ref{Thm:C1boundary}. Our main object of study is the $(n+1)$-dimensional moduli space $\FF_0$ of Floer holomorphic maps of the closed disk. The transversality arguments above endow $\FF_0$ with a smooth structure, as the inverse image of the $0$-section of a $C^{1}$ Fredholm section. (By a classical result in differential topology, see e.g.~\cite{Hirsch}, any $C^{1}$-atlas on a finite dimensional manifold contains a uniquely determined compatible $C^{\infty}$-atlas. Hence, in the current set up, regularity of the Fredholm section beyond the $C^{1}$-level does not carry any information on the differential topology of the solution spaces.) 

As we saw in Section \ref{Sec:Breaking}, the Gromov-Floer boundary of $\FF_0$ consists of broken disks, with one component in the compact $1$-manifold $\FF_{1}$ and the other in the in the compact $(n-1)$-manifold $\MM$, joined at the double point. Thus, the Gromov-Floer boundary is simply the product $\FF_1\times\MM$. As usual in Gromov compactness arguments breaking of curves is related to gradient blow up. Recall from Section \ref{ssec:gageandconv} the open subset 
\[
U_M=\left\{u\in\FF_0\colon \sup_{z\in D}|du(z)|>M\right\}
\]
for any $M>0$ and note that $U_M$ is a neighborhood of the Gromov-Floer boundary in the sense that for any $M>0$, $\FF_0-U_M$ is compact, and any sequence $u_j\in\FF_0$ that converges to a broken disk eventually lies in $U_M$.

In order to glue broken disks it is convenient to trade the derivative blow up on the closed disk for a changing domain that undergoes neck-stretching but in which the derivative remains bounded. We thus define a $1$-parameter family of domains $\Delta_{\rho}$ with a neck region of length $2\rho$ in the next section. We express the Floer equation on these domains by identifying the first half of $\Delta_{\rho}$ with the complement of a small neighborhood of $1 \in D_{-1}$, sufficiently small that the $1$-forms $\gamma_{\zeta,r}$ vanish there for all $\zeta \in \partial D$ and for all $r$ in a compact subset of $[0,\infty)$ containing an open neighborhood of the set where there are solutions, and by then extending the Floer operator as the unperturbed Cauchy-Riemann operator over the second half of $\Delta_{\rho}$. These domains also come equipped with natural metrics and associated Sobolev spaces $\sblv^{2}$ comprising the subset of those maps $u$ with $\bar\pa u|_{\pa \Delta_{\rho}}=0$. We still have two marked points $\zeta_{\pm}$ corresponding to $\pm i$ in the second disk, and we still require that these map to $S_{\epsilon}(a^{\pm})$. 

On these domains, with this gauge fixing, we apply Floer's version of Newton iteration to get a smooth embedding of $\FF_{1}\times\MM\times[\rho_0,\infty)$ into $\bar\pa_{\mathrm{F}}^{-1}(0)$. Reparameterization gives a further embedding $\Psi$ into $\FF_{0}$ that we show, using Corollary \ref{Cor:limit2}, covers a neighborhood $U_M$ for some large $M$. This then gives the desired smooth manifold with boundary for the conclusion of Theorem \ref{Thm:C1boundary}, by taking the two pieces $\FF_0-\overline{U_M}$ for $M>0$ large, and $\FF_1\times\MM\times[\rho_0,\rho_1]$ for $\rho_1\gg 0$ sufficiently large, and gluing them together via the diffeomorphism $\Psi$.  

In the remaining sections we explain the details of this construction.

\subsection{Glued domains and configuration spaces}\label{ssec:glueddomains}
We next define a 1-parameter family, parameterised by $[\rho_0,\infty)$, of disks with two boundary marked points. Fix a small neighborhood $B_{\delta}\subset D$ of $-1$ which we think of in terms of upper half plane coordinates around $0$,
\begin{equation} \label{Eqn:HalfPlaneCoord}
h\colon \{w\colon \im(w)\ge 0,\;|z|\le 1\}\to \{z\in D\colon \re(z)\le 0\},\quad
h(w)=\frac{w-i}{w+i}.
\end{equation}
As above, we consider a metric in $D$ in which a neighborhood of $-1$ becomes cylindrical. In the notation of \eqref{Eqn:HalfPlaneCoord} we fix $\delta>0$ and consider coordinates $\tau+it\in(-\infty,0]\times[0,1]$ and the parameterizing map
\[
\tau +it\mapsto \delta e^{\pi(\tau+it)}
\]
and use a metric which is the standard flat metric in these coordinates (where we interpolate to the metric in $D$ over the region $\delta<|w|<2\delta$).

We next consider a once punctured disk with two marked points parameterized by the upper half plane: 
\[
h\colon\{w\colon \im(w)\ge 0\}\to D_1,\quad h(w)=\frac{w-i}{w+i},
\] 
where the marked points $\pm 1$ map to $\mp i$. Here we use strip coordinates $\tau+it\in[0,\infty)\times [0,1]$ on a neighborhood of the puncture at $\infty$,
\[
\tau+it\mapsto r e^{\pi(\tau+it)},
\]
for some $r>0$, and a metric that agrees with the standard metric in the strip.

In order to find a good functional analytic set up near broken disks we will use the coordinates above to join two disks into one by connecting them by a long neck. Write $D^-$ for the disk with half plane coordinates and strip like end near $-1$ and write $D^+$ for the once punctured disk with two marked points and coordinates as above.
For $\rho\in[\rho_0,\infty)$, define the domain $\Delta_\rho$ as the disk obtained from joining
\[
\Delta^{-}_{\rho}=D^{-}-\left((-\infty,-\rho)\times[0,1]\right)\quad\text{ and }\quad
\Delta^{+}_{\rho}=D^{+}-\left((\rho,\infty)\times[0,1]\right)
\]  
across the boundary intervals $\{-\rho\}\times[0,1]$ and $\{\rho\}\times[0,1]$. We take the two points $\zeta_{\pm}\in\pa \Delta_{\rho}$ that correspond to $\mp i\in\pa D^+$ to be marked points. The domain $\Delta_{\rho}$ inherits a natural Riemannian metric from its pieces. In particular there is strip region of length $2\rho$ connecting the two disks in $\Delta_{\rho}$ that we will often identify with, and denote by,  $[-\rho,\rho]\times[0,1]\subset\Delta_{\rho}$. 

\begin{Remark}\label{rem:H_rho}
It will be convenient in Section \ref{Ssec:NhoodBdary} to have a slightly different picture of this domain, that we will call $H_{\rho}$. Remove the half-disk of radius $\delta e^{-\pi\rho}$ from $\{w\colon \im(w)\ge 0,\;|w|\le 1\}$ -- the remaining part of the upper half plane corresponds to $D^{-}$ --   and insert in its place the half disk $\{w\colon \im(w)>0, |w|\le re^{\pi\rho}\}$, corresponding to $D^{+}$, scaling by $\delta r^{-1}e^{-2\pi\rho}$. The intermediate strip region in $\Delta_{\rho}$ now corresponds to a small annular region in $H_{\rho}$. Note that  the upper half plane metric on $H_{\rho}$ differs drastically from the metric on $\Delta_{\rho}$, whence the different notation.
\end{Remark}

We consider the Sobolev spaces induced by the metric on $\Delta_{\rho}$ and, in analogy with Section \ref{ssec:punctureddisks}, we consider the closed subspace
\[
\widehat{\XX}_{\rho}\subset \sblv^{2}(\Delta_{\rho},\C^{n})\times \sblv^{\frac32}(\pa\Delta_{\rho},\R)
\]
of maps that take the boundary into the Legendrian lift $\tilde f(\Sigma)\subset\C^{n}\times\R$, which are holomorphic on the boundary, $\bar\pa u|_{\pa\Delta_{\rho}}=0$, and which satisfy the marked point condition that $u(\zeta_{\pm})\in S_{\epsilon}(a^{\pm})$. Similarly, we consider the subspace
\[
\widehat{\YY}_{\rho}\subset\sblv^{1}\left(\Hom^{0,1}(T\Delta_{\rho},T\C^{n})\right)
\]
of $(J,i)$-complex anti-linear maps $A$ that vanish on the boundary, $A|_{\pa\Delta_{\rho}}=0$. 
The families $\widehat{\XX}_{\rho}$ and $\widehat{\YY}_{\rho}$ naturally form bundles $\widehat{\XX}$ and $\widehat{\YY}$ over $[\rho_0,\infty)$.

\begin{Lemma} \label{Lem:C1source}
$\widehat{\XX}$ is a locally trivial $C^1$-smooth Banach bundle.
\end{Lemma}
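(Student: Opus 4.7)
My plan is to exhibit explicit local trivializations of $\widehat{\XX}\to [\rho_0,\infty)$ by pulling back along a smooth family of diffeomorphisms that only stretch or compress the neck of $\Delta_\rho$. For fixed $\rho\in[\rho_0,\infty)$ and $\rho'$ in a small interval $I$ around $\rho$, I would build a diffeomorphism $\phi_\rho^{\rho'}\colon \Delta_\rho\to\Delta_{\rho'}$ that is the identity on $\Delta^\pm_{\rho-1}$ (i.e.\ away from the neck), carries the marked points $\zeta_\pm$ to $\zeta_\pm$, and acts on the middle of the strip $[-\rho,\rho]\times[0,1]$ by the linear rescaling $(\tau,t)\mapsto ((\rho'/\rho)\tau,t)$, interpolated smoothly in small collars. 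Since this rescaling is conformal and acts only in the $\tau$-direction (preserving the $t$-coordinate, hence $\partial\Delta$), the diffeomorphism is holomorphic near the boundary, and in particular preserves the conditions that $u|_{\partial}$ lies in $\tilde f(\Sigma)$, that $\bar\partial u|_{\partial}=0$, and that the marked point conditions $u(\zeta_\pm)\in S_\epsilon(a^\pm)$ hold.

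Next I would define the trivialization
\[
\Phi\colon \widehat{\XX}_\rho\times I \longrightarrow \widehat{\XX}|_I, \qquad (u,\rho')\longmapsto (u\circ (\phi_\rho^{\rho'})^{-1},\rho'),
\]
and check that $\Phi$ is a $C^1$-diffeomorphism onto its image. The underlying pullback $(u,\rho')\mapsto u\circ(\phi_\rho^{\rho'})^{-1}$ is a bounded map between the ambient Sobolev spaces since the family $\{\phi_\rho^{\rho'}\}$ is $C^\infty$-smooth in $\rho'$ with derivatives uniformly bounded on the (noncompact but cylindrical) domain; the neck metric is the standard strip metric, for which the Sobolev $\sblv^2$-norm is invariant under translations in $\tau$ and transforms by controlled constants under the rescaling. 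Standard results on the smoothness of composition with $C^\infty$-families of diffeomorphisms in Sobolev spaces, together with the fact that all constraints defining $\widehat{\XX}_\rho$ are preserved by $\Phi$ (as noted above), then show that $\Phi$ is $C^1$-smooth with $C^1$-smooth inverse $(u,\rho')\mapsto (u\circ\phi_\rho^{\rho'},\rho')$.

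Finally, the fiberwise Banach manifold structure is already provided by the exponential coordinates of Lemma \ref{lemma:exponentialmap} (adapted in the obvious way to the presence of the two pointwise marked-point constraints, which cut out a $C^1$-submanifold of codimension $2n$ via the smooth submersion $u\mapsto (u(\zeta_+),u(\zeta_-))\in S_\epsilon(a^+)\times S_\epsilon(a^-)$). Composing these fiberwise charts with $\Phi$ yields the required $C^1$-bundle charts, proving the lemma.

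The main technical point — and the only place where care is needed — is verifying that the stretching diffeomorphisms really do preserve the extra boundary constraint $\bar\partial u|_{\partial\Delta_\rho}=0$ and that the induced pullback is $C^1$ (rather than merely continuous) in the parameter $\rho'$. Both reduce to the fact that $\phi_\rho^{\rho'}$ is holomorphic on a uniform collar of $\partial\Delta_\rho$ and depends $C^\infty$-smoothly on $\rho'$ with derivatives supported on a compact subset of the neck where the metric is flat.
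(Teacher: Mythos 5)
Your overall strategy -- trivialize $\widehat{\XX}$ over $[\rho_0,\infty)$ by precomposing with a smooth family of diffeomorphisms that are the identity on the disk regions of $\Delta_\rho$ and linearly stretch the neck -- is precisely what the paper does. However, there is a genuine gap in the step where you claim the linear rescaling $(\tau,t)\mapsto ((\rho'/\rho)\tau,t)$ is "conformal" and hence "holomorphic near the boundary." This is false: writing $a=\rho'/\rho$ and $z=\tau+it$, one has
\[
\partial_{\bar z}\bigl(a\tau+it\bigr)=\tfrac12\bigl(\partial_\tau+i\partial_t\bigr)\bigl(a\tau+it\bigr)=\tfrac12(a-1),
\]
which is non-zero whenever $a\neq 1$. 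Consequently, if $u$ satisfies $\bar\partial u|_{\partial}=0$, the pullback $u\circ(\phi_\rho^{\rho'})^{-1}$ satisfies
\[
\bar\partial\bigl(u\circ(\phi_\rho^{\rho'})^{-1}\bigr)\Big|_{\partial}=\bigl((\partial u)\circ(\phi_\rho^{\rho'})^{-1}\bigr)\cdot\partial_{\bar z}(\phi_\rho^{\rho'})^{-1}\Big|_{\partial}\neq 0
\]
in general, so the constraint cutting out $\widehat{\XX}_{\rho}$ inside the ambient Sobolev space is \emph{not} preserved. Preserving the $t$-coordinate (hence the boundary as a set) is not the same as being holomorphic on a collar of the boundary; the latter is what is actually needed, as you correctly state in your final paragraph, but your construction does not supply it.

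The paper confronts exactly this issue: its initial diffeomorphism $\tilde\psi_{\rho',\rho}$ (translation near the ends, linear stretch in the middle, interpolation in between) is also not holomorphic on the boundary in the stretching and interpolating regions, and the paper therefore explicitly replaces it by a nearby diffeomorphism $\psi_{\rho',\rho}$ that \emph{is} holomorphic on the boundary, obtained by a Taylor-expansion modification supported in a collar. To repair your argument you should do the same: after writing down the piecewise-linear stretch, modify it near $\partial\Delta_\rho$ (for instance by adding an appropriate pure-imaginary term supported in a boundary collar, exactly as in the construction of the cut-off $\beta$ in the pregluing formula) so that the resulting diffeomorphism satisfies $\partial_{\bar z}\phi_\rho^{\rho'}|_{\partial\Delta_\rho}=0$. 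Only then does precomposition preserve the extra boundary condition $\bar\partial u|_{\partial}=0$ and land in $\widehat{\XX}_{\rho}$; with this correction, the rest of your argument (uniform $C^1$-dependence on $\rho'$ via compactly supported, uniformly bounded derivatives, and the fiberwise Banach-manifold charts from Lemma \ref{lemma:exponentialmap}) goes through as the paper intends.
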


\begin{proof}
We trivialize using precomposition with diffeomorphisms. Consider the middle strip $[-\rho,\rho]\times[0,1]\subset \Delta_{\rho}$, where we think of $[-\rho,0]\times[0,1]$ and $[0,\rho]\times[0,1]$ as subsets of $\Delta^+_{\rho}$ and $\Delta^{-}_{\rho}$, respectively. Let $\rho=\rho'+c$ and define the map $\tilde\psi_{\rho',\rho}\colon [-\rho',\rho']\times[0,1]\to [-\rho,\rho]\times[0,1]$ as
\[
\tilde\psi_{\rho',\rho}(\tau+it)=
\begin{cases}
\tau+it - c,  &\text{ for } -\rho'<\tau<-\rho'+5,\\
a(\tau)\tau + it -c(\tau), &\text{ for }-\rho'+5\le \tau<-\rho'+10,\\
a\tau +it, &\text{ for } -\rho'+10\le \tau < \rho'-10,\\
a(-\tau)\tau + it +c(-\tau), &\text{ for }\rho'-10\le \tau<\rho'-5,\\
\tau+it + c,  &\text{ for } \rho'-5\le\tau<\rho'.
\end{cases}
\]
Here $c(\tau)$, $a(\tau)$ are  smooth cut-off functions defined on $[-\rho'+5, -\rho'+10]$ (constant near the end-points); $c(\tau)$  decreases from $c$ to $0$ and $a(\tau)$ increases from $1$ to $a$ (where $a$ is the unique stretch factor which ensures that $\tilde\psi_{\rho',\rho}$ has the correct image).  Note that if the supports of the derivatives of $a(\tau)$ and $c(\tau)$ are chosen disjoint, $\tilde{\psi}_{\rho',\rho}$ is a diffeomorphism that agrees with translation by $\pm c$ near the ends of the strip region.

Let $\psi_{\rho',\rho}$ be a diffeomorphism that approximates $\tilde{\psi}_{\rho',\rho}$, but which is holomorphic on the boundary (it is straightforward to construct such an approximation using Taylor expansion near the boundary). Finally, define the diffeomorphism
\[
\Psi_{\rho',\rho}\colon \Delta_{\rho'}\to\Delta_{\rho}
\]
as follows:
\[
\Psi_{\rho',\rho}(z)=
\begin{cases}
z &\text{ for } z\in\Delta^{+}_{\rho'}-([-\rho',0]\times[0,1]),\\
\psi_{\rho',\rho}(z) &\text{ for }z\in[-\rho',\rho']\times[0,1],\\
z &\text{ for } z\in\Delta^{-}_{\rho'}-([0,\rho']\times[0,1]).
\end{cases}
\]
We define the trivialization over $[\rho_0,\infty)$ by precomposing with $\Psi_{\rho',\rho}$.  More precisely, the diffeomorphism $\Psi_{\rho',\rho}$ induces a diffeomorphism of Sobolev spaces $\sblv^{2}(\Delta_{\rho'},\C^{n}) \rightarrow \sblv^{2}(\Delta_{\rho},\C^{n})$, similarly for $ \sblv^{\frac32}(\pa\Delta_{\rho},\R)$; noting that the maps $\psi_{\rho',\rho}$ are holomorphic on the boundary, these diffeomorphisms further preserve the subspaces  $\widehat{\XX}_{\rho'}$, $\widehat{\XX}_{\rho}$. The induced diffeomorphisms of those  spaces depend $C^1$-smoothly on $\rho', \rho$ provided these vary only in bounded intervals (which ensures that the derivatives of the maps $\psi_{\rho',\rho}$ may be taken uniformly bounded).  Covering $[\rho_0,\infty)$ by a countable collection of intervals of length $1$, precomposition with appropriate $\Psi_{\rho',\rho}$ yields local trivialisations of the bundle $\widehat{\XX}$ which define a $C^1$-atlas. 
\end{proof}

In order to adapt the current set up to Floer's Picard lemma we introduce 
an ``exponential map'' that gives local coordinates on $\widehat{\XX}$. More precisely, we consider $\widehat{\XX}$ as a bundle over $[\rho_0,\infty)$ with fiber $\widehat{\XX}_{\rho}$.
We first describe the tangent space of $\widehat{\XX}_{\rho}$. As in Lemma \ref{lemma:exponentialmap}, the tangent space $T_{u}\widehat{\XX}_{\rho}$ at a map $u$ is the space of vector fields $v$ along $u$ in $\sblv^{2}(\pa\Delta_{\rho},\C^{n})$, tangent to $f(\Sigma)$ along the boundary, such that $(\nabla v)^{0,1}|_{\pa\Delta_{\rho}}=0$, and such that the component of $v$ at $\zeta_{\pm}$ perpendicular to $S_{\epsilon}(a^{\pm})$ vanishes. We write $\EE(u)$ for this space of vector fields with the $\sblv^{2}$-norm. 

With respect to the metric $\hat{g}$ of Section \ref{ssec:metric}, there is an exponential map $\exp_{u}\colon\EE(u)\to\widehat{\XX}_{\rho}$. The image of a small ball in $\EE(u)$ under $\exp_u$ gives a $C^{1}$-chart on $\widehat{\XX}_{\rho}$ near $u$.  To see how such charts vary with $\rho \in [\rho_0, \infty)$, consider precomposition with the diffeomorphism $\Psi=\Psi_{\rho',\rho}\colon \Delta_{\rho'}\to \Delta_{\rho}$ from  Lemma \ref{Lem:C1source}.   The exponential map above is equivariant under precomposition with $\Psi$:
\begin{equation} \label{Eqn:Exp}
\exp_{u\circ\Psi}(v\circ\Psi)=\left(\exp_{u}(v)\right)\circ\Psi.
\end{equation}
We point out that after fixing the domain for a local trivialization, the linearized variation in the $\rho$-direction in the direction of increasing $\rho$ is a vector field that corresponds to moving the two marked points $\zeta_{\pm}$ toward each other.  

\subsection{Floer-Picard lemma} \label{Ssec:Floer-Picard}
Our main technical tool is \cite[Lemma 6.1]{EkholmSmith}, which is a version of Floer's Picard lemma \cite{Floer:mem} that we restate and prove here for convenience. Let $T$ and $M$ be finite dimensional smooth manifolds and let $\pi_{X}\colon X\to M\times T$ be a smooth bundle of Banach spaces over $M\times T$. Let $\pi_{Y}\colon Y\to T$ be a smooth bundle of Banach spaces over $T$. Let $f\colon X\to Y$ be a smooth bundle map of bundles over $T$ and write $f_{t}\colon X_{t}\to Y_{t}$ for the restriction of $f$ to the fiber over $t\in T$ (where the fiber $X_t$ is the bundle over $M$ with fiber $X_{(m,t)}$ at $m\in M$). If $m\in M$, then write $d_{m}f_{t}\colon X_{(m,t)}\to Y_{t}$ for the differential of $f_{t}$ restricted to the vertical tangent space of $X_{t}$ at $0\in X_{(m,t)}$, where this vertical tangent space is identified with the fiber $X_{(m,t)}$ itself; similarly, the tangent spaces of $Y_{t}$ are identified with $Y_{t}$ using linear translations. Denote by $0_{Y}$ the $0$-section in $Y$,  and by $D(0_X;\epsilon)$  an $\epsilon$-disk sub-bundle of $X$.

\begin{Lemma}\label{Lem:FloerPicard}
Let $f\colon X\to Y$ be a smooth Fredholm bundle map of $T$-bundles, with Taylor expansion in the fiber direction:
\begin{equation}\label{e:TaylorFP}
f_{t}(x)=f_{t}(0)+d_{m}f_{t}\,x+N_{(m,t)}(x),\quad\text{where }\; 0,x\in X_{(m,t)}.
\end{equation}
Assume that $d_{m}f_{t}$ is surjective for all $(m,t)\in M\times T$ and has a smooth family of uniformly bounded right inverses $Q_{(m,t)}\colon Y_t\to X_{(m,t)}$, and that the non-linear term $N_{(m,t)}$ satisfies a quadratic estimate of the form
\begin{equation}\label{e:QuadraticFP}
\|N_{(m,t)}(x)-N_{(m,t)}(y)\|_{Y_t}\le C\|x-y\|_{X_{(m,t)}}(\|x\|_{X_{(m,t)}}+\|y\|_{X_{(m,t)}}),
\end{equation}
for some constant $C>0$.  Let  $\ker(df_{t})\to M$ be the vector bundle with fiber over $m\in M$ equal to $\ker(d_mf_{t})$. 
If $\|Q_{(m,t)}f_{t}(0)\|_{X_{(m,t)}}\le\frac{1}{8C}$, then for $\epsilon<\frac{1}{4C}$,
$f^{-1}(0_{Y})\cap D(0_{X};\epsilon)$  is a smooth submanifold
diffeomorphic to the bundle over $T$ with fiber at $t\in T$ the $\epsilon$-disk bundle in $\ker(df_{t})$.
\end{Lemma}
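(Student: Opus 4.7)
The plan is to carry out a fiberwise Newton iteration, organized as a Banach contraction so that the solutions assemble smoothly in the parameters $(m,t) \in M \times T$. The first step is to use $Q$ to produce a smoothly varying splitting
\[
X_{(m,t)} \;=\; K_{(m,t)} \oplus R_{(m,t)}, \qquad K_{(m,t)} := \ker(d_m f_t), \quad R_{(m,t)} := \mathrm{image}(Q_{(m,t)}),
\]
with associated projections $x \mapsto Q_{(m,t)} d_m f_t(x)$ onto $R_{(m,t)}$ and $x \mapsto x - Q_{(m,t)} d_m f_t(x)$ onto $K_{(m,t)}$. The restriction $d_m f_t|_{R_{(m,t)}}$ is a Banach space isomorphism onto $Y_t$ with inverse $Q_{(m,t)}$, and the smoothness and uniform boundedness of $Q$ promote $K$ and $R$ to smooth Banach subbundles of $X$.

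With $(m,t)$ fixed and $v \in K_{(m,t)}$ small, I would recast $f_t(v+w) = 0$ for $w \in R_{(m,t)}$ as a fixed-point equation: using the Taylor expansion \eqref{e:TaylorFP} and $d_m f_t\, v = 0$, the equation becomes $d_m f_t(w) = -f_t(0) - N_{(m,t)}(v+w)$, and applying $Q_{(m,t)}$ (which inverts $d_m f_t|_R$) rewrites this as
\[
w \;=\; T_{(m,t,v)}(w) \;:=\; -Q_{(m,t)}\, f_t(0) \;-\; Q_{(m,t)}\, N_{(m,t)}(v + w).
\]
Using $\|Q f_t(0)\| \leq 1/(8C)$, the quadratic estimate \eqref{e:QuadraticFP}, and the uniform bound on $\|Q\|$, a short calculation shows that with $\epsilon = 1/(4C)$ and $\|v\| \leq \epsilon$, the map $T_{(m,t,v)}$ sends the closed $\epsilon$-ball in $R_{(m,t)}$ to itself and has Lipschitz constant at most $1/2$. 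The Banach contraction principle then yields a unique $w(m,t,v)$ in that ball with $f_t(v + w(m,t,v)) = 0$.

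To conclude I would verify two things. First, the map $(m,t,v) \mapsto v + w(m,t,v)$ from the $\epsilon$-disk bundle in $\ker(df_t)$ into $X$ is a bijection onto $f^{-1}(0_Y) \cap D(0_X;\epsilon)$: any candidate zero $x$ in the small disk splits uniquely as $x = v' + w'$ along $K \oplus R$, and the uniqueness of the fixed point identifies $w' = w(m,t,v')$. Second, $w$ depends smoothly on $(m,t,v)$, by the standard parametrized contraction / implicit function argument: $\mathrm{id} - D_w T_{(m,t,v)}$ is uniformly invertible (operator norm $\leq 1/2$), and $f_t$, $Q_{(m,t)}$, and the projection onto $R$ are smooth in the parameters.

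The hardest step is arranging the constants to be uniform across $M \times T$: the quadratic constant $C$, the norm of $Q_{(m,t)}$, and the bound on $\|Q f_t(0)\|$ must mesh so that a single $\epsilon$ works over the whole parameter space. This is exactly what the uniformity and smoothness assumptions in the hypotheses ensure, so beyond this the argument is a routine parametrized contraction mapping; in the applications to Floer theory, by contrast, the genuinely difficult work is precisely to produce the uniformly bounded right inverses and verify the quadratic estimates that this lemma takes as input.
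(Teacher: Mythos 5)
Your argument is correct and is essentially the paper's own proof: the paper likewise splits off the kernel (via a projection $p$ onto $\ker(d_mf_t)$), augments $f$ to $\widehat{f}(x)=(f_t(x),\,p\,x-k)$ so that its differential has the explicit inverse $\widehat{Q}(y,k)=Qy+k$, and runs the Newton/contraction iteration with the kernel element as parameter, obtaining smoothness in $(m,t,k)$ from the parametrized fixed-point (transverse intersection) argument. Your variant of solving $w=-Qf_t(0)-QN(v+w)$ directly in $R=\mathrm{image}(Q)$ is the same construction in slightly different packaging, and your remarks on uniformity of constants match the level of detail of the paper's sketch.
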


\begin{proof}

The proof for the case when $M$ and $T$ are both points appears in Floer \cite{Floer:mem} and generalizes readily to the case under study here. We give a short sketch pointing out some features that will be used below. Let $K_{(m,t)}=\ker(d_mf_{t})$ and choose a smooth splitting $X_{(m,t)}=X_{(m,t)}'\oplus K_{(m,t)}$ with projection $p_{(m,t)}\colon X_{(m,t)}\to K_{(m,t)}$. For $k_{(m,t)}\in K_{(m,t)}$, define the bundle map $\widehat{f}_{(m,t)}\colon X_{(m,t)} \to Y_{t}\oplus K_{(m,t)}$, 
\[
\widehat{f}_{(m,t)}(x)=\bigl(f_t(x)\;,\;p_{(m,t)}\,x-k_{(m,t)}\bigr). 
\]
Then solutions to the equation $f_t(x)=0$, $x\in X_{(m,t)}$ with $p_{(m,t)}\,x=k_{(m,t)}$ are in one-to-one correspondence with solutions to the equation $\widehat{f}_{(m,t)}(x)=0$. Moreover, the differential $d\widehat{f}_{(m,t)}$ is an isomorphism with inverse 
$\widehat{Q}_{(m,t)}\colon Y_t\oplus K_{(m,t)}\to X_{(m,t)}$, 
\[
\widehat{Q}_{(m,t)}\,(y,k)= Q_{(m,t)}\,y\; +\;k.
\]
On the other hand, solutions of the equation $\widehat{f}_{(m,t)}(x)=0$ are in one-to-one correspondence with fixed points of the map $F_{(m,t)}\colon X_{(m,t)}\to X_{(m,t)}$ given by
\[
F_{(m,t)}(x)=x\;-\;\widehat{Q}_{(m,t)}\,\widehat{f}_{(m,t)}(x).
\]
Fixed points are obtained from the Newton iteration scheme: if
\[
v_0=k_{(m,t)},\quad v_{j+1}=v_j\;-\;\widehat{Q}_{(m,t)}\,\widehat{f}_{(m,t)}(v_j),
\]
then $v_{j}$ converges to $v_\infty$ as $j\to\infty$ and $F_{(m,t)}(v_\infty)=v_{\infty}$. 
Note that these fixed points correspond to transverse intersections between the graph of $\widehat{Q}_{(m,t)}$ and the diagonal $\Delta_{(m,t)}\subset X_{(m,t)}\times X_{(m,t)}$ and consequently form a smooth bundle first over the kernel bundle over $M$ for fixed $t$ and then over $T$.

Furthermore, if $\|f_{t}(0_{X_{(t,m)}})\|$ is sufficiently
small then there is $0<\delta<1$ such that:
\[
\|v_{j+1}-v_j\|\le \delta^{j}\|f_{t}(0)\|
\]
and consequently
\begin{equation}\label{Eq:iterest}
\|v_\infty-v_0\|\le \kappa \|f_{t}(0)\|,
\end{equation}
where $\kappa$ is a constant.
\end{proof}

\subsection{Pre-gluing} \label{Ssec:Pre-gluing}
The first step in our gluing construction is to embed a collar on the Gromov-Floer boundary of $\FF_0$ in the configuration space $\widehat{\XX}\times[0,\infty)$ approximating broken curves. More precisely, in the notation of Section \ref{Sec:transversality}, let 
\[
\NN=\FF_{1}\times\MM\times[\rho_0,\infty),
\]
and let $\NN_{\rho} = \FF_{1}\times\MM$, thought of as the fiber over $\rho$ of the projection $\NN\to[\rho_0,\infty)$. We will define a fiber-preserving embedding
\[
\Pre\colon\NN\to\widehat{\XX}\times\partial D\times[0,\infty),\quad \Pre(\NN_\rho)\subset\widehat{\XX}_\rho\times\partial D\times[0,\infty).
\]

Let $u^-\colon D^{-}\to\C^{n}$ be a map in $\FF_1$, and let $u^+\colon D^+\to\C^{n}$ be a map in $\MM$ that respects marked point conditions as in Sections \ref{ssec:1+gauge} and \ref{ssec:glueddomains}. By Remark \ref{rem:Fourierexpansion}, and by compactness of $\FF_1$ and $\MM$, we find that for each $\epsilon_0>0$, there exists $\rho_0>0$ such that $u^-$ takes the neighborhood $(-\infty,-\rho_0]\times[0,1]$ of the puncture at $-1$ into $B_{\epsilon_0}$ and $u^+$ takes the neighborhood $[\rho_0,\infty)\times[0,1]$ of the puncture at $1$ into $B_{\epsilon_0}$. Assume that $\epsilon_0>0$ is sufficiently small so that $f(\Sigma)$ is standard in $B_{\epsilon_0}$. 

Let $\beta\colon\R\times[0,1]\to\C$ be a smooth function that equals $0$ on $(-\infty,-1]\times[0,1]$, equals $1$ on $[1,\infty)\times[0,1]$, and which is real and holomorphic along the boundary of the domain. (Such a function may be obtained by modifying any suitable cut-off function with real boundary values by addition of a pure imaginary function supported near the boundary and with suitable normal derivative.) For $\rho>\rho_0$ define the map $u^-\#_\rho u^+\colon \Delta_{\rho}\to\C^{n}$
\begin{equation}\label{eq:interpol}
u^-\#_\rho u^+(z)=
\begin{cases}
u^-(z), &\text{for }z\in D^{-}-((-\infty,-\rho]\times[0,1]),\\
(1-\beta(z))u^-(z)+\beta(z)u^+(z) &\text{for }z\in [-\rho,\rho]\times[0,1],\\
u^+(z), &\text{for }z\in D^+-([\rho,\infty)\times[0,1]).
\end{cases}
\end{equation}

We then define the map $\Pre\colon\NN\to\widehat{\XX}\times\partial D\times[0,\infty)$ as 
\begin{equation} \label{Eqn:PreglueRho}
\Pre(u^-,u^+,\rho)=\left(u^-\#_{\rho} u^{+},\zeta(u^{-}),r(u^{-})\right),
\end{equation}
where $(\zeta(u^{-}),r(u^{-}))\in\partial D\times[0,\infty)$ is the coordinate of the Hamiltonian term at $u^{-}$, i.e.~$u^{-}$ solves the Floer equation $\left(du^{-}+\gamma_{\zeta(u^{-}),r(u^{-})}\otimes X_H(u^{-})\right)^{0,1}=0$. By compactness of the moduli spaces involved we find that if $\rho_{0}$ is sufficiently large then $\Pre$ is a fiber preserving embedding (as a map of bundles over $[\rho_0,\infty)$). More precisely, the restriction 
\[
\Pre_{\rho}=\Pre|_{\NN_{\rho}}\colon \NN_{\rho}\to\widehat{\cfig}_{\rho}\times\partial D\times[0,\infty)
\] 
is a family of embeddings which depends smoothly on $\rho$.

Consider the normal bundle $N\NN_{\rho}$ of $\NN_{\rho}=\Pre_{\rho}(\NN_{\rho})\subset\widehat{\cfig}_{\rho}\times\partial D\times[0,\infty)$ as a sub-bundle of the restriction $T_{\NN_{\rho}}\left(\widehat{\cfig}_{\rho}\times\partial D\times[0,\infty)\right)$ of the tangent bundle of $\widehat{\cfig}_{\rho}\times\partial D\times[0,\infty)$ to $\NN_{\rho}$. For simpler notation, write $w_\rho=u^{-}\#_\rho u^{+}$ and recall that $\EE(w_\rho)$ denotes the fiber of the tangent bundle $T_{w_\rho}\widehat{\XX}_{\rho}$, see the last two paragraphs of  Section \ref{ssec:glueddomains}.  

The fiber $N_{w_{\rho}}\NN_{\rho}$ of the normal bundle at $w_{\rho}$ is the $L^{2}$-complement of the subspace
\[
d\Pre(T_{(u^{-},u^{+})}\NN_{\rho})\subset T_{(w_\rho,\zeta,r)}\widehat{\cfig}_{\rho}\times\partial D\times[0,\infty),
\]
where $(\zeta,r)=(\zeta(u^{-}),r(u^{-}))$ are the coordinates of the Hamiltonian term in the Floer equation at $u^{-}$. The $L^{2}$-pairing is defined as
\begin{equation} \label{Eqn:L2pair}
\left\langle (v,\delta\zeta,\delta r) \, , \, (v',\delta\zeta',\delta r')\right\rangle = \langle v \, ,\,  v'\rangle_{\EE(w_\rho)} + \langle  \delta\zeta\, , \, \delta\zeta'\rangle_{T_{\zeta}\partial
 D}+\langle \delta r \, , \, \delta r'\rangle_{T_{r}[0,\infty)},
\end{equation}
where the first summand is the $L^{2}$-pairing on the Sobolev space $\EE(w_\rho)$ induced by the $L^{2}$-pairing on the ambient space, whilst the second and third are the standard inner products on the $1$-dimensional tangent spaces indicated.
 
Let $\ker(u^-)\subset T_{u^{-}}\XX_1\times\partial D\times[0,\infty)$ denote the kernel of the linearized operator $D\bar\pa_{\mathrm{F}}$ at $u^{-}$. An element $v^{-}$ in $\ker(u^{-})$ is a linear combination 
\begin{equation} \label{Eqn:VectorFieldPieces}
v^{-}=v_{\XX_1}^{-}+v_{\pa D}^{-}+v_{[0,\infty)}^{-},
\end{equation}
where $v_{\XX_{1}}^{-}$ is a vector field in $\sblv^{2}(D^{-},\C^{n})$ which satisfies the usual boundary conditions, where $v_{\pa D}^{-}$ is tangent to $\pa D$ at $\zeta(u^{-})$, and $v_{[0,\infty)}^{-}$ is tangent  to $[0,\infty)$ at $r(u^-)$.  Similarly, let $\ker(u^{+})\subset T_{u^{+}}\XX_1'$ denote the kernel of the linearized operator $D\bar \pa$ at $u^{+}$;  elements $v^{+}$ in $\ker(u^{+})$  are vector fields in $\sblv^{2}(D^{+},\C^{n})$ which satisfy the usual boundary conditions and conditions at marked points.  Both linearized operators agree near the punctures with the standard (constant co-efficient) $\bar\pa$ operator, with linear Lagrangian boundary conditions $\R^{n}$ along one boundary component adjacent to the puncture and $i\R^{n}$ along the other. Thus, by Remark \ref{rem:Fourierexpansion}, the vector fields $v^{-}_{\XX_{1}}$ and $v^{+}$ admit Fourier expansions of the same form as the Floer solutions near the punctures.

Let the cut-off function $\beta\colon \R\times[0,1]\to\C$ be as defined previously, in particular be holomorphic along the boundary, and associate elements $\tilde v^{\pm}\in \EE(w_\rho)\oplus T\partial D\oplus T[0,\infty)$ to $v^{\pm}$ as follows. First, $\tilde v^{+}$ is the following vector field along $\Delta_{\rho}$ in $\EE(w_{\rho})$:
\[
\tilde v^{+}(z)=
\begin{cases}
0 &\text{for }z\in D^- -((-\infty,0]\times[0,1]),\\
\beta(z)v^{+}(z) &\text{for }z\in[-\rho,\rho]\times[0,1],\\
v^{+}(z) &\text{for }z\in D^+-([0,\infty)\times[0,1]).
\end{cases}
\]
Analogously, we define the vector field $\tilde v_{\XX_{1}}^{-}$ as follows
\[
\tilde v_{\XX_{1}}^{-}(z)=
\begin{cases}
v_{\XX_{1}}^{-}(z) &\text{for }z\in D^{-}-(-\infty,0]\times[0,1],\\
(1-\beta(z))v_{\XX_1}^{-}(z) &\text{for }z\in[-\rho,\rho]\times[0,1],\\
0 &\text{for }z\in D^{+}-[0,\infty)\times[0,1],
\end{cases}
\]
and then define
\[
\tilde{v}^{-}=
\tilde v_{\XX_1}^{-}+v_{\partial D}^{-}+ v_{[0,\infty)}^{-}\in\EE(w_{\rho})\oplus T_{\zeta}\partial D\oplus T_r[0,\infty).
\] 

\begin{Remark}
We have constructed the vector fields $\tilde{v}^+$ and $\tilde v_{\XX_{1}}^{-}$ only when starting with smooth (by elliptic regularity) vector fields $v^{\pm}$, which makes it manifest that they depend smoothly on $\rho$. In fact, for $\rho$ large enough the cut-off used to construct $\tilde{v}^+$ and $\tilde v_{\XX_{1}}^{-}$ from $v^{\pm}$ is concentrated in the region where the Lagrangian boundary conditions are linear and the almost complex structure is standard, so the initial vector fields $v^{\pm}$ are even analytic in the region of interpolation. More generally, starting with any $\sblv^2$-vector fields $v^{\pm}$ (that are holomorphic on the boundary), we find, since their norms are finite and since the $\sblv^{2}$-norm controls the supremum norm, that there are neighborhoods of the punctures that map into the region where the Lagrangian boundary conditions are linear and the almost complex structure is standard. Hence for $\rho$ large enough the $\rho$-dependence of $\tilde{v}^{+}$ and $\tilde{v}_{\XX_1}^{-}$, viewed as sections in $\EE(w_{\rho})$, would still be smooth  (although, in that case the resulting vector fields would themselves only be $\sblv^2$).
\end{Remark}

Recall that $\FF_1\subset \XX_1\times \partial D \times[0,\infty)$ is a $1$-manifold;   let $v_1^{-}$ be a basis vector in $T_{u^{-}}\FF_1\subset T_{u^{-}}(\XX_1\times\partial D\times[0,\infty))$, i.e.~$v_1^{-}$ is a basis in $\ker(u^{-})$. Also $\MM\subset \XX_{1,0}'$ is an $(n-1)$-manifold. Let $v_{1}^{+},\dots,v_{n-1}^{+}$ be a basis in $T_{u^{+}}\MM\subset T_{u^{+}}\XX_{1}'$, i.e.~in $\ker(u^{+})$. Let $\widetilde{\ker}(u^{-})\subset T_{w_\rho}(\widehat{\XX}\times\partial D\times[0,\infty))$ be the subspace spanned by $\tilde v^{-}$ and let $\widetilde{\ker}(u^{+})\subset T_{w_\rho}(\widehat{\XX}\times\partial D\times[0,\infty))$ denote the subspace spanned by $\tilde{v}_{1}^+,\dots,\tilde{
v}_{n-1}^+$. Note that $L^{2}$-projection gives an isomorphism 
\begin{equation} \label{Eqn:L2projiso}
N_{w_{\rho}}\NN_{\rho}\to \left(\widetilde{\ker}(u^{-})\oplus\widetilde{\ker}(u^{+})\right)^{\perp},
\end{equation}
where $W^\perp$ denotes the $L^{2}$ orthogonal complement of the subspace $W$, which gives a bundle isomorphism that varies uniformly continuously with $\rho$. Below we will often represent the fiber bundle $N\NN_{\rho}\subset T_{\NN_{\rho}}(\widehat{\XX}\times\partial D)$ as the bundle with fiber $\left(\widetilde{\ker}(u^{-})\oplus\widetilde{\ker}(u^{+})\right)^{\perp}$ over $w_{\rho}=u^{-}\#_{\rho}u^{+}$. 

\begin{Remark}
In fact, the uniformly continuous dependence of the bundle isomorphism \eqref{Eqn:L2projiso} on $\rho$ is even uniformly smooth in the following sense. In order to study derivatives at $\rho_0$, we trivialize the bundle $\widehat{\XX}$ over the interval $(\rho_0-1,\rho_0+1)$, as in Lemma \ref{Lem:C1source}, yielding a corresponding family of $L^{2}$-projections for $\rho\in(\rho_0-1,\rho_0+1)$. One can directly check that the derivatives of this family with respect to $\rho$ at $\rho_0$, calculated in the given trivialization, are uniformly bounded as $\rho_0\to\infty$. (The  essential point is that, in the notation of Lemma \ref{Lem:C1source}, if $\rho_0' = \rho_0+1$ then both the translation term $c\in[-1,1]$ and the stretch factor $a \in [(\rho_0-1)/\rho_0, (\rho_0+1)/\rho_0]$ are bounded, which enables one to control the first term in the $L^2$-norm \eqref{Eqn:L2pair} of elements in the image.) Our subsequent argument does not require this uniform smoothness, however, but only the uniform bounds established in Lemma \ref{Lem:partialglu1} below.
\end{Remark}

Using the exponential map introduced before Equation \eqref{Eqn:Exp},  we parameterize a neighborhood of $\NN_\rho\subset\widehat{\XX}_{\rho}\times\partial D\times[0,\infty)$ by a neighborhood of the $0$-section in the restriction of the tangent bundle $T_{\NN_{\rho}}(\widehat{\XX}_{\rho}\times\partial D\times[0,\infty))$, which we take to be the $\epsilon$-disk sub-bundle $T_{\NN_{\rho};\epsilon}(\widehat{\XX}_{\rho}\times\partial D\times[0,\infty))$. The Floer equation then gives smooth maps $f_{\rho}\colon T_{\NN_{\rho};\epsilon}(\widehat{\cfig}_{\rho}\times\partial D\times[0,\infty))\to\widehat{\tcfig}_{\rho}$, defined as follows. 
If $w_{\rho}= u^{-}\#_{\rho} u^{+}\in \NN_{\rho}$ then for $(v,\delta\zeta,\delta r)\in T_{w_{\rho};\epsilon}(\widehat{\cfig}_{\rho}\times\partial D\times[0,\infty))$:
\begin{equation}\label{Eq:Floermaponglued}
f_{\rho}(v,\delta \zeta,\delta r)=\left(
d \exp_{w_{\rho}}(v) + \gamma_{\zeta(u^-)e^{i\delta\zeta},r(u^-)+\delta r}\otimes X_{H}(\exp_{w_{\rho}}(v,c))
\right)^{0,1}.
\end{equation}
This family of maps varies smoothly with $\rho$ and thus gives a smooth bundle map 
\[
f\colon T_{\NN}(\widehat{\XX}\times\partial D\times[0,\infty))\to\widehat\YY
\]
of bundles over $[\rho_0,\infty)$, where the fibers at $\rho\in[\rho_0,\infty)$ of the source and target are respectively $T_{\NN_{\rho}}(\widehat{\XX}_{\rho}\times\partial D\times[0,\infty))$ and $\widehat\YY_{\rho}$.

To simplify notation, we will denote by $f$  the restriction $f|_{N_{\epsilon}\NN}$, i.e.~the fiberwise restriction of $f_{\rho}$ to the radius $\epsilon$ disk bundle $N_{\epsilon}\NN_{\rho}$ in the normal bundle of $\NN_{\rho}\subset\widehat{\XX}_{\rho}\times\partial D\times[0,\infty)$.

\subsection{The gluing map} \label{Ssec:gluing}
To construct the gluing map 
\[
\Phi\colon \NN\to \FF_{0}
\]
which parametrizes a $C^{1}$-neighborhood of the Gromov-Floer boundary we will apply Lemma \ref{Lem:FloerPicard} to the map $f\colon N\NN\to\widehat{\tcfig}$ defined in \eqref{Eq:Floermaponglued}.  
This requires us to verify that the assumptions of Lemma \ref{Lem:FloerPicard} are fulfilled for $f$. Before we go into verifying that we present a dictionary between maps and spaces considered here and those in Lemma \ref{Lem:FloerPicard}.

As above, for $(u^{-},u^{+},\rho)\in\NN$, let $w_{\rho}=u^{-}\#_{\rho} u^{+}$.   In the notation of Lemma \ref{Lem:FloerPicard}:

\begin{itemize}
\item  $[\rho_0,\infty)$ corresponds to $T$; $\FF_{1}\times\MM\approx \NN_{\rho}$ corresponds to $M$;
\item  $N\NN$, with fiber $N_{w_{\rho}}\NN_{\rho}$, corresponds to $X$; $\widehat{\tcfig}$ corresponds to $Y$; 
\item  we use the norm $\|\cdot\|_{N\NN_{\rho}}$ inherited from $T(\widehat{\XX}_{\rho}\times\partial D\times[0,\infty))$, and the usual Sobolev $1$-norm as $\|\cdot\|_{\widehat{\tcfig}_{\rho}}$;
\item the fiber $N_{w_{\rho}}\NN_{\rho}$ of the normal bundle $N\NN_{\rho}$ at $w_{\rho}$ corresponds to $X_{(m,t)}$;
\item $w_{\rho}$ corresponds to $0\in X_{(m,t)}$.
\end{itemize}

We first show that for $\rho\in[\rho_0,\infty)$, $w_{\rho}$ is a good approximation of a solution. In fact the error converges exponentially fast to $0$.

\begin{Lemma}\label{Lem:almostholomorphic}
For $(u^{-},u^{+}) \in \FF_{1}\times\MM$, let $w_{\rho}=u^{-}\#_{\rho} u^{+}\in\NN_{\rho}$. Then as $\rho \to \infty$, 
\[
\sup_{\NN_{\rho}}\|f_{\rho}(w_{\rho})\|_{\widehat{\tcfig}_{\rho}}=\Ordo(e^{-\frac{\pi}{2}\rho}).
\]
\end{Lemma}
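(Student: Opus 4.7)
The plan is to show that $f_\rho(w_\rho)$ is supported on a bounded piece of the neck region where the cutoff $\beta$ is non-constant, and that the pointwise size of $w_\rho$ itself decays like $e^{-\pi\rho/2}$ there, coming from the leading Fourier mode at the punctures.

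First I would verify vanishing of $f_\rho(w_\rho)$ outside the interpolation region. By \eqref{eq:interpol}, on $\Delta_\rho^-\setminus((-\rho,-\rho+2)\times[0,1])$ we have $w_\rho=u^-$, which solves the Floer equation by construction, so $f_\rho(w_\rho)=0$. Analogously, on $\Delta_\rho^+\setminus((\rho-2,\rho)\times[0,1])$ we have $w_\rho=u^+$, which solves the unperturbed Cauchy-Riemann equation (and there the $1$-form $\gamma$ in the Floer operator vanishes by construction, cf. Section~\ref{Ssec:Overview}), so again $f_\rho(w_\rho)=0$. It remains to estimate $f_\rho(w_\rho)$ on the interpolation region $I=[-1,1]\times[0,1]$ (the support of $d\beta$), which sits well inside the neck for $\rho\gg 0$.

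On $I$, for $\rho$ large, Remark \ref{rem:Fourierexpansion} implies that the boundary condition is linear ($\R^n\cup i\R^n$) and $J\equiv J_0$, so we may use the standard Cauchy-Riemann operator. Also, for $\rho$ sufficiently large, $I$ lies outside the supports of the Hamiltonian terms for both $u^-$ (whose puncture is at distance $\approx\rho$) and for $w_\rho$ (by the construction of the Floer operator on $\Delta_\rho$). Thus on $I$,
\[
f_\rho(w_\rho)=(\bar\partial w_\rho)^{0,1}=(1-\beta)\,\bar\partial u^-+\beta\,\bar\partial u^++(\bar\partial\beta)(u^+-u^-)=(\bar\partial\beta)(u^+-u^-),
\]
since $u^{\pm}$ are both $\bar\partial$-closed on $I$. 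Because $\beta$ is holomorphic on the boundary, $\bar\partial\beta|_{\partial I}=0$, so this expression has vanishing trace on $\partial\Delta_\rho$ and genuinely lies in $\widehat\YY_\rho$.

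Next I would invoke the Fourier expansions of Remark \ref{rem:Fourierexpansion}. In strip coordinates $\tau+it$ around the respective punctures, with the interpolation region $I$ at distance $\rho$ from each puncture, each $u^{\pm}$ admits a convergent expansion
\[
u^{\pm}(\tau+it)=\sum_{k\ge 0}c^{\pm}_k e^{-(k+\frac12)\pi(|\tau|+it)\,\text{\rm(sign)}},
\]
with coefficients $c^{\pm}_k$ depending continuously on $(u^-,u^+)\in\FF_1\times\MM$. Since $\FF_1$ and $\MM$ are both compact (Lemmas \ref{lemma:weakmoduli} and \ref{Lem:holdisks1puncture}), the $c^{\pm}_k$ are uniformly bounded, and the expansions converge uniformly. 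Therefore on $I$, which sits at strip-distance $\approx\rho$ from each puncture, we have uniform pointwise bounds $|u^{\pm}|,|\nabla u^{\pm}|=\Ordo(e^{-\pi\rho/2})$ with constants independent of $(u^-,u^+)$, the dominant term coming from $k=0$.

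Combining, since $\bar\partial\beta$ and its derivative are bounded and supported on the compact set $I$ of $\rho$-independent size,
\[
\|f_\rho(w_\rho)\|_{\widehat\YY_\rho}=\|(\bar\partial\beta)(u^+-u^-)\|_{\sblv^1(I)}\le C\bigl(\|u^+\|_{\sblv^1(I)}+\|u^-\|_{\sblv^1(I)}\bigr)=\Ordo(e^{-\pi\rho/2}),
\]
uniformly in $(u^-,u^+)\in\NN_\rho=\FF_1\times\MM$, which yields the claim. The main bookkeeping point, rather than any serious obstacle, is to verify carefully that for $\rho$ large the Hamiltonian term and the non-standard part of $J$ both drop out on $I$, so that the estimate reduces to a pure $\bar\partial$-computation on linear boundary conditions where the explicit Fourier expansion is available.
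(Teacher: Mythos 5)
Your proposal is correct and follows the same route as the paper, whose proof is exactly the one-line observation that the estimate follows from the Fourier expansions of $u^{\pm}$ near the punctures (Remark \ref{rem:Fourierexpansion}), compactness of $\FF_1\times\MM$, and the definition \eqref{eq:interpol} of $u^-\#_\rho u^+$; you have simply spelled out that the error is supported where $d\beta\ne 0$ and is of size $\Ordo(e^{-\pi\rho/2})$ there. (Only a cosmetic slip: early on you place the interpolation region at the ends of the neck, though you later correctly identify it as the fixed-size support of $d\beta$ in the middle of the neck, which is what the estimate uses.)
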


\begin{proof}
This is an immediate consequence of the Fourier expansions of $u^{-}$ and $u^{+}$ near the puncture, cf. Remark \ref{rem:Fourierexpansion}, the compactness of $\FF_{1}\times\MM$, and the definition of $u^{-}\#_{\rho} u^{+}$ in  \eqref{eq:interpol}.
\end{proof}

\begin{Lemma}\label{Lem:partialglu1}
There is $\rho_0>0$ such that for $\rho>\rho_0$, the vertical differential 
\[
d_{w_{\rho}}f_{\rho}\colon N_{w_{\rho}}\NN_{\rho}\to \widehat{\tcfig}_{\rho}
\]
is surjective, so there is a bundle map $\widehat{\tcfig}_{\rho}\to N_{w_{\rho}}\NN_{\rho}$ which is a right inverse of $d_{w_{\rho}}f_{\rho}$ on fibers. Furthermore, the inverse map depends smoothly on $\rho$ and is uniformly bounded over $[\rho_0,\infty)$. 
\end{Lemma}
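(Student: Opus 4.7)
The plan is to construct an approximate right inverse by pre-gluing right inverses of the linearized operators at the broken components, and then correct by a Neumann series.

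First, by transversality for $\FF_1$ and $\MM$ (Lemmas~\ref{Lem:actualdimensions} and \ref{Lem:holdisks1puncture}), for each $u^- \in \FF_1$ the operator $D\bar\partial_{\mathrm F}\colon T_{u^-}(\XX_1\times\partial D\times[0,\infty)) \to \widehat{\YY}_{D^-}$ is surjective with $1$-dimensional kernel $\ker(u^-)$, and for each $u^+\in\MM$ the operator $D\bar\partial\colon T_{u^+}\XX_{1,0}' \to \widehat{\YY}_{D^+}$ is surjective with $(n-1)$-dimensional kernel $\ker(u^+)$. Restricting to the $L^2$-orthogonal complements of these kernels produces bounded right inverses $Q^-$ and $Q^+$; since $\FF_1$ and $\MM$ are compact and the operators and their kernels vary smoothly, the norms $\|Q^\pm\|$ are uniformly bounded.

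Next, fix a partition-of-unity on the neck $[-\rho,\rho]\times[0,1]$: let $\chi^-$ vanish on $[0,\rho]\times[0,1]$ and equal $1$ on $[-\rho,-1]\times[0,1]$, with $\chi^+ = 1-\chi^-$. Extend by $1$ and $0$ respectively over $\Delta_\rho^-$ and $\Delta_\rho^+$. Given $\eta\in\widehat{\YY}_\rho$, define $\eta^\pm = \chi^\pm\eta$, extended by zero to give elements of $\widehat{\YY}_{D^\pm}$. Put $v^\pm = Q^\pm\eta^\pm$, and form the spliced vector field on $\Delta_\rho$ using the cut-off $\beta$ of \eqref{eq:interpol}, exactly as in the pre-gluing of kernel elements in Section~\ref{Ssec:Pre-gluing}, obtaining $\tilde v \in \EE(w_\rho)\oplus T_\zeta\partial D\oplus T_r[0,\infty)$. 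Finally, let $T_\rho\eta\in N_{w_\rho}\NN_\rho$ be the $L^2$-orthogonal projection of $\tilde v$ onto $(\widetilde{\ker}(u^-)\oplus\widetilde{\ker}(u^+))^\perp$, cf.\ \eqref{Eqn:L2projiso}. By construction $T_\rho$ is bounded uniformly in $\rho$, and depends smoothly on $\rho$ and on $(u^-,u^+)$.

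Now compute $d_{w_\rho}f_\rho(T_\rho\eta) = \eta + R_\rho\eta$. Away from the neck region $[-1,1]\times[0,1]$ and from the overlap between cut-offs, the difference vanishes by exactness of the $Q^\pm$. The error $R_\rho$ therefore comes from two sources: (i) the commutator of $D\bar\partial_{\mathrm F}$ with the cut-off $\beta$, and (ii) the correction to $\tilde v$ under the $L^2$-projection onto the normal bundle. For (i), the derivative of $\beta$ is supported in a fixed bounded strip, while $v^\pm = Q^\pm\eta^\pm$ satisfies $D\bar\partial v^\pm = \eta^\pm$ and hence, in the strip regions near the punctures of $D^\pm$ where the Fourier expansion of Remark~\ref{rem:Fourierexpansion} applies, $v^\pm$ decays like $e^{-\pi\rho/2}$ since the neck of $\Delta_\rho$ is glued at coordinate $|\tau|=\rho$ away from the puncture. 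Combined with the compactness of $\FF_1\times\MM$ this gives $\|R_\rho\|_{\mathrm{op}} = \Ordo(e^{-\pi\rho/2})$. For (ii), an analogous Fourier decay argument applied to the spanning vector fields $\tilde v_j^\pm$ shows that the projection correction is similarly exponentially small.

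The main obstacle is the control of $R_\rho$: one must verify that the various cut-off commutators and projection corrections really do decay uniformly in $(u^-,u^+)$, which uses the smooth $\rho$-dependent trivialization of $\widehat\XX$ from Lemma~\ref{Lem:C1source} and the uniform Fourier decay coming from compactness of the two moduli spaces together with the generic non-degeneracy of the leading Fourier coefficient (Lemma~\ref{Lem:nondegasympt}). Once $\|R_\rho\|_{\mathrm{op}} < \tfrac{1}{2}$, the Neumann series defines $(I+R_\rho)^{-1}$ with uniform bound, and $Q_{w_\rho} := T_\rho\circ(I+R_\rho)^{-1}$ is the desired uniformly bounded right inverse of $d_{w_\rho}f_\rho$, smooth in $\rho$ since all ingredients are.
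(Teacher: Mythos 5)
Your proposal is correct, but it takes a genuinely different route from the paper. The paper's proof establishes the required surjectivity \emph{indirectly}: it proves a uniform injectivity estimate $\|v_\rho\|_{N\NN_\rho}\le C\|(d_{w_\rho}f_\rho)v_\rho\|_{\widehat\YY_\rho}$ by contradiction (assume a normalized sequence with vanishing image, cut it into pieces $v_\rho^\pm$ via cut-off functions $\alpha,\gamma$ with $|d\alpha|,|d\gamma|=\Ordo(\rho^{-1})$, and use uniform invertibility of the operators on the pieces restricted to the complements of $\widetilde{\ker}(u^\pm)$ to force $\|v_\rho^\pm\|\to 0$, a contradiction), and then concludes by observing that an injective Fredholm operator of index $0$ is an isomorphism, with smooth inverse in $\rho$. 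You instead construct the right inverse \emph{directly}: splice the exact inverses $Q^\pm$ from the two factors using a partition of unity, project to the normal bundle, estimate the resulting error $R_\rho$ via the exponential Fourier decay on the neck, and correct by a Neumann series. Both are standard formulations of linear gluing. The paper's contradiction argument is shorter to state (it avoids tracking the error term explicitly) and does not need the inverses $Q^\pm$ or their uniform bounds in any quantitative way — only the qualitative uniform invertibility of the limit operators; on the other hand, your construction yields the right inverse explicitly together with the exponential rate $\Ordo(e^{-\pi\rho/2})$ of the error, which is useful input if one later needs quantitative control of the glued solution. One small point worth making explicit in your write-up: since the pre-glued map $w_\rho$ equals $u^\pm$ away from the neck and the splicing region lies where the boundary condition is the linear $\R^n\cup i\R^n$ and $J=J_0$, the operator $d_{w_\rho}f_\rho$ genuinely agrees with $D\bar\partial_{\mathrm F}$ (resp.\ $D\bar\partial$) on the support of $\chi^\mp$; this is what guarantees that the only error terms are the commutator with the splicing cut-off and the projection correction, both of which decay as you claim.
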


\begin{proof}
This relies on a standard linear gluing argument, as follows. Recall from \eqref{Eqn:L2projiso} the representation of $N_{w_{\rho}}\NN_{\rho}$ as the $L^{2}$ complement
\[
\left(\widetilde{\ker}(u^-)\oplus\widetilde{\ker}(u^+)\right)^{\perp}\subset T_{w_{\rho}}(\widehat{\XX}_{\rho}\times\partial D\times[0,\infty)).
\]
We show that there is $\rho_0>0$ and a constant $C>0$ such that for any $w_{\rho}$ with $\rho>\rho_0$ and any $v_{\rho}\in N_{w_{\rho}}\NN_{\rho}$, the following estimate holds:
\begin{equation}\label{eq:garding}
\|v_{\rho}\|_{N\NN_{\rho}}\leq C\|(d_{w_{\rho}}f_{\rho})v_{\rho}\|_{\widehat{\YY}_{\rho}}
\end{equation} 

Assume that \eqref{eq:garding} is false; then there is a sequence of pairs $(w_{\rho},v_{\rho})$, $\rho\to\infty$, where $w_{\rho}=u^{-}\#_{\rho} u^{+}$ for some $(u^{-},u^{+})\in \FF_{1}\times\MM$ (depending on $\rho$) and $v_{\rho}\in N_{w_{\rho}}\NN_{\rho}$ such that 
\[
\|v_{\rho}\|_{N\NN_{\rho}}=1,\quad \|(d_{w_{\rho}}f_\rho) v_{\rho}\|_{\widehat{\YY}_{\rho}}\to 0 \text{ as }\rho\to\infty.
\] 
By compactness of $\FF_{1}\times\MM$, we pass to a subsequence for which $(u^{-},u^{+})$ converges as $\rho\to\infty$. Write
\[
v_{\rho}=v_{\rho}^{\widehat{\XX}}+v_{\rho}^{\partial D}+v_{\rho}^{[0,\infty)},
\]
subdividing $v_{\rho}$ into components corresponding to the factors of the ambient space $\widehat{\XX}_{\rho}\times\partial D\times[0,\infty)$ of $\NN_{\rho}$. Let $\alpha\colon\Delta_{\rho}\to\C$ be a cut off function similar to $\beta$ in \eqref{eq:interpol} but with  larger support. More precisely, $\alpha$ is real and holomorphic on the boundary, equal to $1$ on $(D^{-}-(-\infty,0]\times[0,1])\cup([-\rho,2]\times[0,1])$, equal to $0$ outside $(D^{-}-(-\infty,0]\times[0,1])\cup([-\rho,\frac12\rho]\times[0,1])$, and with $|d\alpha|=\Ordo(\rho^{-1})$. Let
\[
v_{\rho}^{-}=\alpha v_{\rho}^{\widehat{\XX}}+v_{\rho}^{\partial D}+v_{\rho}^{[0,\infty)}.
\]
Similarly, let $\gamma\colon\Delta_{\rho}\to\C$ be a cut off function equal to $1$ on $(D^{+}-[0,\infty)\times[0,1])\cup([-2,\rho]\times[0,1])$, equal to $0$ outside $(D^{+}-[0,\infty)\times[0,1])\cup([-\frac12\rho,\rho]\times[0,1])$, and with $|d\gamma|=\Ordo(\rho^{-1})$. Let
\[
v_{\rho}^{+}=\gamma v_{\rho}^{\widehat{\XX}}.
\] 

Consider the elements spanned by $\widetilde{\ker}(u^{-})$ and $v_{\rho}^{-}$ as lying in $T_{u^{-}}(\XX_{1}\times\partial D\times[0,\infty))$, and those spanned by  $\widetilde{\ker}(u^{+})$ and $v_{\rho}^{+}$ as lying in $T_{u^{+}}\XX_{1,0}'$. Note that these vector fields have support in the piece of $\Delta_{\rho}$ corresponding to $D^{-}$, respectively to $D^{+}$. Using the asymptotics of the solutions of the linearized equation, compare to the discussion after Equation \ref{Eqn:VectorFieldPieces},  it is clear that for $\rho$ sufficiently large, $L^{2}$ projection gives uniformly invertible isomorphisms $\ker(u^{\pm})\to\widetilde{\ker}(u^{\pm})$, which in turn implies that the linearized Floer operator is uniformly invertible on the $L^{2}$ complement of $\widetilde{\ker}(u^{\pm})$.   

Noting that $(d_{w_{\rho}}f_{\rho})v_{\rho}^{-}$ agrees with $(D_{u^{-}}\bar\partial_{\mathrm{F}})v_{\rho}^{-}$ on its support (which lies in the part of $\Delta_{\rho}$ corresponding to $D^{-}$), and using the property $|d\alpha|=\Ordo(\rho^{-1})$, we find that
\begin{equation} \label{Eqn:ImageToZero}
\|(D_{u^{-}}\bar\partial_{\mathrm{F}}) v_{\rho}^{-}\|_{\YY_1}\to 0.
\end{equation}
Similarly, looking at the other half of $\Delta_{\rho}$, we find
\begin{equation} \label{Eqn:ImageToZero2}
\|(D_{u^{+}}\bar\partial) v_{\rho}^{+}\|_{\YY_{1}}\to 0.
\end{equation}
Since the supports of the cut off functions $\alpha$ and $\gamma$ include the supports of the elements in $\widetilde{\ker}(u^{-})$ and $\widetilde{\ker}(u^{+})$, respectively, 
\[
v_{\rho}\perp 
\left(\widetilde{\ker}(u^{-})\oplus\widetilde{\ker}(u^{+})\right) \quad  \text{implies that} \quad v_{\rho}^{-}\perp 
\widetilde{\ker}(u^{-})
\ \ \text{and}\ \ 
v_{\rho}^{+}\perp\widetilde{\ker}(u^{+}).
\] 
By the uniform invertibility of $D_{u^{-}}\bar\partial_{\mathrm{F}}$ and $D_{u^{+}}\bar\partial$ we then find from \eqref{Eqn:ImageToZero}, \eqref{Eqn:ImageToZero2} that
\begin{align*}
\|v_{\rho}^{-}\|_{N\NN}&=\|v_{\rho}^{-}\|_{T_{u^{-}}(\XX_{1}\times\partial D\times[0,\infty))}\ \to 0,\\
\|v_{\rho}^{+}\|_{N\NN}&=\|v_{\rho}^{+}\|_{T_{u^{-}}\XX_{1}'}\ \to 0.
\end{align*}
This however contradicts $\|v_{\rho}\|=1$, and we conclude that \eqref{eq:garding} holds.  In particular, $d_{w_{\rho}}f_{\rho}$ is injective.

The remaining statements of the lemma follow by observing that $d_{w_{\rho}}f_{\rho}$ comprises a smooth family of Fredholm operators of index $0$ that are injective and hence isomorphisms, and that taking the inverse is a smooth operation in this setting.
\end{proof}

\begin{Lemma}\label{Lem:partialglu2}
For $\rho_0$ sufficiently large and $\rho \geq \rho_0$, the quadratic estimate \eqref{e:QuadraticFP} for the non-linear term in the Taylor expansion of $f_{\rho}$ holds. More precisely, let $N_{w_{\rho}}$ be defined so that the following equation holds:
\[
f_{\rho}(v)=f_{\rho}(0)+(d_{w_{\rho}}f_{\rho})\,v+N_{w_{\rho}}(v),
\]
where $f_{\rho}(0)=\left(dw_{\rho}+\gamma_{\zeta,r}\otimes X_{H}(w_{\rho})\right)^{0,1}$.
Then there exists $\rho_0>0$ and a constant $C>0$ such that for all $w_{\rho}$ with $\rho>\rho_0$ and all $v,u\in N_{w_{\rho}}\NN_{\rho}$,
\begin{equation}\label{e:QuadraticNN}
\|N_{w_{\rho}}(v)-N_{w_{\rho}}(u)\|_{\widehat{\YY}_\rho}\le C\|v-u\|_{N\NN}(\|v\|_{N\NN}+\|u\|_{N\NN}).
\end{equation}
\end{Lemma}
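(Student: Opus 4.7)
The plan is to derive the quadratic estimate from a uniform bound on the second fiberwise derivative of $f_\rho$. By the fundamental theorem of calculus applied twice,
\[
N_{w_\rho}(v) - N_{w_\rho}(u) = \int_0^1\!\!\int_0^1 D^2 f_\rho\bigl(s(u + t(v-u))\bigr)\bigl(u + t(v-u),\, v-u\bigr)\, ds\, dt,
\]
so it suffices to prove the bilinear bound $\|D^2 f_\rho(x)(a,b)\|_{\widehat{\YY}_\rho} \le C\,\|a\|_{N\NN}\,\|b\|_{N\NN}$ uniformly for $x$ in the $\epsilon$-disk of $N_{w_\rho}\NN_\rho$ and for $\rho\ge\rho_0$.

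Next I would identify the four sources of nonlinearity in \eqref{Eq:Floermaponglued}: (i) the curvature of $\exp_{w_\rho}$, which contributes $\nabla^2\exp$ terms; (ii) the variation of the $(J,i)$-antilinear projection with the base point $\exp_{w_\rho}(v)$; (iii) the nonlinear dependence of $X_H\circ\exp_{w_\rho}(v)$ on $v$; and (iv) the smooth dependence of $\gamma_{\zeta e^{i\delta\zeta},r+\delta r}$ on $(\delta\zeta,\delta r)$. Since $J$, $H$, the family $\gamma_{\zeta,r}$ and $f$ are all smooth with $C^k$-norms bounded on the fixed neighborhoods where the arguments take values, a direct computation yields a pointwise schematic estimate for $D^2 f_\rho(x)(a,b)$ and its first derivative of the form $C_1(1+|\nabla w_\rho|+|\nabla x|+|\nabla^2 x|)\cdot P(a,b)$, where $P(a,b)$ is a sum of products, each containing exactly one factor from $\{|a|,|\nabla a|,|\nabla^2 a|\}$ and one from $\{|b|,|\nabla b|,|\nabla^2 b|\}$. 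The key analytic input is then the Sobolev embedding $\sblv^2(\Delta_\rho)\hookrightarrow C^0(\Delta_\rho)\cap W^{1,4}(\Delta_\rho)$, whose constants are bounded \emph{uniformly in $\rho$} because the metric on $\Delta_\rho$ is flat on the cylindrical neck and the caps $\Delta^\pm_\rho$ have bounded geometry; applying H\"older's inequality to each term of $P(a,b)$ then yields the desired bilinear bound on $\|D^2 f_\rho(x)(a,b)\|_{\widehat{\YY}_\rho}$.

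The main obstacle is the uniformity in $\rho$, which is controlled entirely by the pointwise factor $|\nabla w_\rho|$. Away from the neck, $w_\rho$ coincides with $u^\pm$, whose derivatives are uniformly bounded by compactness of $\FF_1$ and $\MM$; on $[-\rho,\rho]\times[0,1]$, the Fourier expansion of Remark \ref{rem:Fourierexpansion} gives $|\nabla u^\pm(\tau,t)| = \Ordo(e^{-\pi|\tau|/2})$ as one approaches the puncture, and the interpolation \eqref{eq:interpol} uses a cutoff $\beta$ with $|d\beta|$ uniformly bounded, so $\|\nabla w_\rho\|_{L^\infty(\Delta_\rho)}$ is bounded independently of $\rho$. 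Finally one verifies that $D^2 f_\rho(x)(a,b)\in\widehat{\YY}_\rho$: its trace on $\partial\Delta_\rho$ vanishes because all second variations preserve the boundary conditions defining $\widehat{\YY}_\rho$, and the Hamiltonian term vanishes near $\partial\Delta_\rho$ by construction of $\gamma_{\zeta,r}$.
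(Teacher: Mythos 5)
There is a genuine gap. Your schematic bound allows $P(a,b)$ to contain the product $|\nabla^2 a|\,|\nabla^2 b|$, and (even if one notes that $D^2 f_\rho$ itself involves only first derivatives of $a,b$, so the offending second derivatives enter only after one further domain differentiation to compute the $\sblv^1$-norm) the resulting terms of the form $|\nabla a|\,|\nabla^2 b|$ or $|\nabla^2 a|\,|\nabla b|$ cannot be bounded in $L^2(\Delta_\rho)$ by $\|a\|_{\sblv^2}\|b\|_{\sblv^2}$. On a two-dimensional domain $\sblv^2\hookrightarrow C^0$ and $\sblv^1\hookrightarrow L^p$ for finite $p$, but $\nabla a\notin L^\infty$ and $\nabla^2 b\in L^2$ only, so H\"older gives you at best an $L^1$ bound, not $L^2$. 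The same issue afflicts your leading factor $(1+|\nabla w_\rho|+|\nabla x|+|\nabla^2 x|)$: $|\nabla^2 x|$ is merely $L^2$, so it cannot be multiplied by anything not already in $L^\infty$, and $|\nabla x|$ is not in $L^\infty$ either.

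What saves the paper's argument is a structural fact your proposal does not use: after passing to the Jacobi-field description and the trivialization $\hat\Phi$, the nonlinearity $G_\rho$ is \emph{linear} in the derivative variable $\eta$ (equivalently $\partial^2\Phi/\partial\eta^2=0$), because the Jacobi equation is a linear ODE whose initial data depend linearly on $\eta$. This eliminates the $|\nabla a|\,|\nabla b|$-type term from $D^2 f_\rho$ altogether, so that after one further domain differentiation the only second-derivative contributions are of the form $|a|\,|\nabla^2 b|$, $|u-v|\,|\nabla^2 v|$, etc., each pairing one $L^2$ factor with one $L^\infty$ factor via $\sblv^2\hookrightarrow C^0$. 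This is precisely why the paper verifies $\partial^2\Phi/\partial\eta^2=0$ and why the case analysis in the proof of Lemma \ref{Lem:partialglu2} singles out the ``$D_2G_\rho$'' terms. Your FTC reduction and uniformity-in-$\rho$ discussion (flat neck metric, Fourier decay of $u^\pm$, bounded $|d\beta|$) are correct, but without identifying the linearity in $\eta$ the pointwise estimate you assert is both unproved and, taken at face value, too weak to close the argument.
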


\begin{proof}
The proof follows the lines of similar results for gluing in Floer theory. The strategy is to find a smooth function $G_{\rho}$ such that the non-linear term at $v\in N_{w_\rho}\NN$ is given by evaluating $G_{\rho}$, i.e.~$N_{w_{\rho}}(v)=G_{\rho}(w_{\rho},dw_{\rho},v,dv)$, and to show that $G_{\rho}$ as well as its first derivative with respect to $v$ vanishes at $v=0$. The estimate \eqref{e:QuadraticNN} then follows from a combination of standard Sobolev estimates.

We use
\[
E_{\rho}=\left(\Hom(T\Delta_{\rho},T\C^{n})\right)^{2}|_{\mathrm{diag}(\Delta_{\rho})}\times T(\partial D\times[0,\infty))
\]
as the source space of $G_{\rho}$, where we view the first factor as a bundle over $\Delta_{\rho}\times\Delta_{\rho}$ with fiber at $(z_1,z_2)$ equal to 
\[
(\C^{n})^{2}\times \Hom(T_{z_{1}}\Delta_{\rho},\C^{n})\times\Hom(T_{z_{2}}\Delta_{\rho},\C^{n}),
\] 
restricted to the diagonal $z_1=z_2$.

In order to find the required map $G_{\rho}\colon E_{\rho}\to \Hom^{0,1}(T\Delta_{\rho},T\C^{n})$, recall that the map $f_{\rho}$ was defined via the exponential map $\exp$ in the metric $\hat g$ followed by the Floer operator. Let $v_{0}\in T_{w_{\rho}}\widehat{\XX}_{\rho}$, let $\tau\in T_{z}\Delta_{\rho}$, and let $B_{\tau}$ be the Jacobi field along the geodesic
\[
\sigma(s)=\exp_{w_{\rho}(z)}(sv_0(z)),\quad s\in[0,1],
\]
with initial conditions
\[
B_{\tau}(0)=dw_{\rho}(\tau),\quad \nabla_{\dot\sigma(0)}B_{\tau}=\nabla_{dw_{\rho}(\tau)}v_{0}(z).
\]
Then $(d_{z}\exp_{w_{\rho}}(v))\tau=B_{\tau}(1)$. 

Write elements of $E_{\rho}$ as $e=(z,x,y,\xi,\eta,\zeta, r,\delta\zeta,\delta r)$, where $z\in\Delta_{\rho}$,  $x,y\in\C^{n}$, $\xi\in\Hom(T_{z}\Delta_{\rho},T_{x}\C^{n})$, $\eta\in\Hom(T_{z}\Delta_{\rho},T_{y}\C^{n})$, $(\zeta,\delta\zeta)\in T\partial D$, and $(r,\delta r)\in T[0,\infty)$.
Let $\tau\in T_{z}\Delta_{\rho}$ and define the bundle map $\Phi\colon E_{\rho}\to \Hom^{0,1}(T\Delta_{\rho},T\C^{n})$ as follows:
\begin{align*}
(\Phi(e))\tau &= \left(B_{\tau}(1)+\gamma_{r+\delta r,\zeta e^{i\delta\zeta}}(\tau)X_{H}(\exp_{x}(y))\right)\\ 
&+J(\exp_{x}(y))\left(B_{i\tau}(1)+\gamma_{r+\delta r,\zeta e^{i\delta\zeta}}(i\tau)X_{H}(\exp_{x}(y))\right).
\end{align*}
Then, for $v=v_{\XX}+v_{\partial D}+v_{[0,\infty)}\in T_{w_{\rho}}(\widehat{\XX}_{\rho}\times\partial D\times[0,\infty))$ we have
\[
(f_{\rho}(v))(z)=\Phi\left(z,w_{\rho},v_{\XX},dw_{\rho},dv_{\XX},\zeta,v_{\partial D},r,v_{[0,\infty)}\right).
\]

We point out for future reference that the function $\Phi$ satisfies $\frac{\partial^{2}\Phi}{\partial \eta^{2}}(e)=0$. To see this, note that the Jacobi equation is a linear ordinary differential equation and that the initial condition contains only first order terms in $\eta$. 

In order to derive the expression for the non-linear term we must study derivatives of $\Phi$. To this end we will use a trivialization of the bundle $\Hom^{0,1}(T\Delta_{\rho},T\C^{n})$ which is described in \cite{EES}. Briefly, writing $J$ for the almost complex structure on $\C^{n}$ and, for $y\in\C^{n}$,  $\Pi_y$ for parallel translation along the geodesic $\exp_{x}(sy)$, one finds $J_{y}=\Pi_{y}^{-1}J(\exp_{x}(y))\Pi_{y}$ is the almost complex structure at $\exp_{x}(y)$ transported by parallel translation to $T_{x}\C^{n}$. We trivialize the bundle of complex anti-linear maps by introducing the linear map $S_{y}=(J_{y}+J)^{-1}(J_{y}-J)$ and taking the $(J_y,i)$ complex anti linear map $A$ to the $(J,i)$ complex anti-linear map $(1-S_{y})^{-1}A$. Let $\hat\Phi$ denote $\Phi$ in that trivialization:
\[
\hat\Phi = (1-S_{y})^{-1}\Pi_{y}^{-1}\Phi.
\]
Then $\hat\Phi$ is a $(J,i)$ complex anti-linear map from $T_{z}\Delta_{\rho}$ to $T_{w_{\rho}(z)}\C^{n}$, i.e.~we have trivialized the bundle of complex anti linear maps in $E_{\rho}$ in the $y$-direction for fixed $(z,x)$. We then define the map $G_{\rho}$ in this trivialization as follows: recalling $e=(z,x,y,\xi,\eta,\zeta, r,\delta\zeta,\delta r)$,
\begin{align}\label{eq:defG}
G_{\rho}(e)&=
\hat\Phi(e) - (\xi+\gamma_{\zeta,r}\otimes X_{H}(x))^{0,1}\\\notag
&-\left(\nabla_{\xi}(y,\eta)+\left(\tfrac{\partial \gamma_{\zeta,r}}{\partial \zeta}\delta\zeta+
\tfrac{\partial\gamma_{\zeta,r}}{\partial r}\delta r\right)\otimes X_{H}(x)
+\gamma_{\zeta,r}\otimes (d_{x}X_{H})y\right)^{0,1}\\\notag 
&-\tfrac12J\nabla_yJ\left(\xi+\gamma_{\zeta,r}\otimes X_{H}(x)\right)^{1,0},
\end{align}
where, for $\tau\in T_{z}\Delta_{\rho}$,  $\nabla_{\xi(\tau)}(y,\eta(\tau))$ denotes the covariant derivative in direction $\xi(\tau)$ of a vector field equal to $y$ at $x$ and with first derivative $\eta(\tau)$.

Noting that the last two lines in \eqref{eq:defG} correspond to the linearization of $f_{\rho}$, compare the calculation in ~\cite[Proof of Lemma 3.5, p.~3319]{EES}, we find that 
\[
(N_{w_\rho}(v))(z)=G_{\rho}(z,w_{\rho},v_{\widehat\XX},dw_{\rho},dv_{\widehat\XX},\zeta,r,v_{\partial D},v_{[0,\infty)}).
\]

We next study $G_{\rho}$ for small $\epsilon y$ when $\epsilon\to 0$. A straightforward calculation using the first order equation
\[
S_{\epsilon y} =  -\epsilon\frac12 J\nabla_{y}J +\Ordo(\epsilon^{2}),
\] 
see \cite[Equation (3.13)]{EES} then shows that
\[
G(z,x,\epsilon y,\xi,\epsilon\eta,\zeta,r,\epsilon(\delta\zeta),\epsilon(\delta r))=\Ordo(\epsilon^{2})
\]
and we find that
\[
G(z,x,0,\xi,0,\zeta,r,0,0)=0\quad\text{ and }\quad d_{y,\eta,\delta\zeta,\delta r}G(z,x,0,\xi,0,\zeta,r,0,0)=0.
\]

Consider now 
\begin{align*}
&N_{w_\rho}(v)-N_{w_\rho}(u)=\\
&G_{\rho}(z,w_{\rho},v_{\widehat\XX},dw_{\rho},dv_{\widehat\XX},\zeta,r,v_{\partial D},v_{[0,\infty)})-
G_{\rho}(z,w_{\rho},u_{\widehat\XX},dw_{\rho},du_{\widehat\XX},\zeta,r,u_{\partial D},u_{[0,\infty)}).
\end{align*}

After addition and subtraction of suitable terms we need only consider inputs that differ in one argument. We focus on the nontrivial, infinite dimensional part of the estimate and suppress the Hamiltonian components as well as $z$ and $w_{\rho}$ from the notation. Then we have
\begin{align*}
|G_{\rho}(v,dv)-G_{\rho}(u,du)|&\le|dG_{\rho}|(|u-v|+|dv-du|)\\
&\le C(|v|+|u|+|du|+|dv|)(|u-v|+|dv-du|),
\end{align*}
where we estimate $dG$ using the fact that it vanishes at $0$. To get the desired estimate, we square and integrate. Using the fact that the $\sblv^{1}$-norm (i.e.~one derivative in $L^{2}$) controls the $L^{4}$-norm we conclude that the $L^{2}$-norm of the left hand side is dominated by
\[
C(\|u\|_{N\NN}+\|v\|_{N\NN})\|u-v\|_{N\NN}.
\]  

We must estimate one more derivative:
\begin{align*}
d(G_{\rho}(v,dv)-G_{\rho}(u,du))&=D_1G_{\rho}(v,dv)dv + D_2G_{\rho}(v,dv)d^{2}v\\ 
&-D_1G_{\rho}(u,du)du -D_2G_{\rho}(u,du)d^{2}u.
\end{align*}
For the difference of the first and the third terms we find 
\begin{align*}
|D_1G_{\rho}(v,dv)dv-D_1G_{\rho}(u,du)du|&\le |D_1G_{\rho}(v,dv)dv-D_1G_{\rho}(u,du)dv|
+|D_{1}G_{\rho}(u,du)(dv-du)|\\
&\le C((|v-u|+|dv-du|)|dv|+(|u|+|du|)|dv-du|)\\
&\le C(|v|+|u|+|du|+|dv|)(|u-v|+|dv-du|),
\end{align*}
which gives the desired estimate exactly as above.

To estimate the difference of the second and fourth terms we use the fact that $G_{\rho}$ has first order dependence on $\eta$ (this was discussed for $\Phi$ above, and is obvious for the term subtracted from it in defining $G_{\rho}$). In the present language this means that $D_{2}G_{\rho}(y,\eta)$ is in fact independent of $\eta$, $D_{2}G_{\rho}(y,\eta)=D_{2}(y)$, and we get
\begin{align*}
|D_2G_{\rho}(v)d^{2}v-D_2G_{\rho}(u)d^{2}u|&\le |(D_{2}G_{\rho}(v)-D_{2}G_{\rho}(u))d^{2}v|+|D_{2}G_{\rho}(u)(d^{2}v-d^{2}u)|\\
&\le C(|u-v||d^{2}v|+|u||d^{2}u-d^{2}v|)\\
&\le C((|u|+|v|)|d^{2}u-d^{2}v|+(|d^{2}u|+|d^{2}v|)|u-v|).
\end{align*}  
Squaring and integrating, using the fact that the $\sblv^{2}$-norm (two derivatives in $L^{2}$) controls the supremum norm, we find that this term is also controlled by
\[
C(\|u\|_{N\NN}+\|v\|_{N\NN})\|u-v\|_{N\NN}.
\]  
This finishes the proof.
\end{proof}

Lemmas \ref{Lem:almostholomorphic}, \ref{Lem:partialglu1}, and \ref{Lem:partialglu2} have the following consequence:

\begin{Corollary} \label{Cor:C1Structure}
The Newton iteration map with initial values in $\NN_{\rho}=\Pre(\NN_\rho)\subset N\NN_{\rho}$ gives a $C^{1}$-diffeomorphism
\[
\Phi\colon \FF_1\times\MM\times[\rho_0,\infty)\to f^{-1}(0),
\]
where $0$ denotes the $0$-section in $\widehat{\tcfig}$,
such that $\Phi(u_1,u_2,\rho)$ limits to a broken curve with components $u_1\in\FF_1$ and $u_2\in\MM$ as $\rho\to\infty$.
\end{Corollary}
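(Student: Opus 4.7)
The plan is to apply the Floer--Picard lemma (Lemma \ref{Lem:FloerPicard}) to the bundle map $f\colon N\NN\to\widehat\YY$ of bundles over $T=[\rho_0,\infty)$ with base $M=\FF_1\times\MM$, constructed in \eqref{Eq:Floermaponglued}. The three preceding lemmas are designed precisely to supply the hypotheses of that abstract statement. Verifying each in turn:

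\textbf{Smallness of $f(0)$:} At the base point $w_\rho=u^-\#_\rho u^+$ (the zero vector in $N_{w_\rho}\NN_\rho$), Lemma \ref{Lem:almostholomorphic} gives $\|f_\rho(w_\rho)\|_{\widehat\YY_\rho}=\Ordo(e^{-\pi\rho/2})$, uniformly on the compact moduli space $\FF_1\times\MM$. Combined with the uniform bound on right inverses $Q_\rho$ established in Lemma \ref{Lem:partialglu1}, we obtain $\|Q_\rho f_\rho(0)\|_{N\NN_\rho}=\Ordo(e^{-\pi\rho/2})$, which is bounded by $\tfrac{1}{8C}$ once $\rho_0$ is chosen large enough (where $C$ is the constant in the quadratic estimate).

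\textbf{Surjectivity and right inverse:} Lemma \ref{Lem:partialglu1} asserts that $d_{w_\rho}f_\rho\colon N_{w_\rho}\NN_\rho\to\widehat\YY_\rho$ is surjective, with a smooth family of uniformly bounded right inverses. In fact the proof of that lemma shows $d_{w_\rho}f_\rho$ is an isomorphism, since it is a Fredholm operator of index $0$ that is injective, so the kernel bundle $\ker(df_\rho)\to M$ of Lemma \ref{Lem:FloerPicard} is the zero bundle and its $\epsilon$-disk bundle is simply $M=\FF_1\times\MM$ itself.

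\textbf{Quadratic estimate:} This is precisely Lemma \ref{Lem:partialglu2}.

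With these three inputs satisfied, Lemma \ref{Lem:FloerPicard} delivers a smooth submanifold structure on $f^{-1}(0)\cap D(0_X;\epsilon)$, together with a diffeomorphism to the bundle over $T=[\rho_0,\infty)$ with fiber the $\epsilon$-disk bundle in $\ker(df_\rho)$, which by the above is just $\FF_1\times\MM\times[\rho_0,\infty)$. The diffeomorphism $\Phi$ is realised concretely by the Newton iteration: starting from $v_0=0\in N_{w_\rho}\NN_\rho$, one iterates $v_{j+1}=v_j-\widehat Q_\rho\widehat f_\rho(v_j)$ to a limit $v_\infty(u^-,u^+,\rho)\in N_{w_\rho}\NN_\rho$, and sets $\Phi(u^-,u^+,\rho)=\exp_{w_\rho}(v_\infty)$ (together with the displacement of the puncture location and Hamiltonian parameter encoded in $v_\infty$). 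The $C^1$-dependence on $(u^-,u^+,\rho)$ follows from the $C^1$-structure on $\widehat\XX$ established in Lemma \ref{Lem:C1source} together with the smooth parameter dependence built into Lemma \ref{Lem:FloerPicard}.

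For the limiting behaviour, the estimate \eqref{Eq:iterest} in the proof of Lemma \ref{Lem:FloerPicard} yields $\|v_\infty\|_{N\NN_\rho}\le\kappa\|f_\rho(0)\|_{\widehat\YY_\rho}=\Ordo(e^{-\pi\rho/2})$, so $\Phi(u^-,u^+,\rho)$ differs from the pre-glued configuration $w_\rho=u^-\#_\rho u^+$ by a normal vector of $\sblv^2$-norm (hence $C^0$-norm) going to zero exponentially in $\rho$. Since $w_\rho$ tautologically converges (in the Gromov--Floer sense, after the identification of Section \ref{ssec:glueddomains}) to the broken curve with components $u^-$ and $u^+$, the same holds for $\Phi(u^-,u^+,\rho)$.

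The main conceptual step is packaged entirely into the three lemmas; the present corollary is the mechanical assembly. The most delicate of the inputs --- and thus the moral obstacle --- is Lemma \ref{Lem:partialglu1}, where the uniform invertibility of the linearization on the normal bundle $N_{w_\rho}\NN_\rho$ requires the linear gluing contradiction argument and controls how the $L^2$-orthogonal complement of the approximate kernels behaves as $\rho\to\infty$. Once this uniform linear estimate is in hand, the Newton iteration and the identification of $f^{-1}(0)$ with a disk bundle are essentially formal.
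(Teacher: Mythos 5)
Your argument is correct and follows the paper's own route exactly: Corollary \ref{Cor:C1Structure} is stated in the paper as a direct consequence of Lemma \ref{Lem:FloerPicard}, with the hypotheses supplied by Lemmas \ref{Lem:almostholomorphic}, \ref{Lem:partialglu1}, and \ref{Lem:partialglu2} precisely as you verify them. You have merely spelled out the mechanical assembly (including the observation that the kernel bundle is zero, so the disk bundle over $T$ collapses to $\FF_1\times\MM\times[\rho_0,\infty)$, and the use of \eqref{Eq:iterest} for the limiting behaviour) that the paper leaves implicit.
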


\begin{proof}
This is now a direct consequence of Lemma \ref{Lem:FloerPicard}.
\end{proof}

\subsection{A smooth neighborhood of the Gromov Floer boundary} \label{Ssec:NhoodBdary}
In this section we show that $\exp(f^{-1}(0))\subset\widehat{\cfig}\times[0,\infty)$, which is diffeomorphic to $\NN$ via the gluing map $\Phi$, gives a $C^{1}$-collar neighborhood of the Gromov-Floer boundary of $\FF_{0}$. The argument presented here is parallel to that of \cite[Section 6.6]{EkholmSmith}.  

Fix $\rho\in [\rho_0,\infty)$, and let $f_{\rho}^{-1}(0)=f^{-1}(0)\cap N\NN_{\rho}$. 
We introduce a re-parametrization map, 
\[
\Psi_{\rho}\colon f_{\rho}^{-1}(0)\to\FF_0,
\]  
as follows. Write the domain $D$ as $H_{\rho}$, see Remark \ref{rem:H_rho}, where the region in $\Delta_{\rho}$ corresponding to $D^{-}$ corresponds to the upper half plane with a small disk at the origin removed, and where the region in $\Delta_{\rho}$ corresponding to $D^{+}$ corresponds to the remaining small disk and has been scaled by $\delta e^{-2\pi\rho}$.  An element  $u\in N\NN_{\rho}$ gives a map $\Psi_{\rho}(u)\colon D\to\C^{n}$, which is the re-parametrization discussed above of the map $\exp_{w_{\rho}}(u)$, originally defined on $\Delta_{\rho}$. The map $\Psi_{\rho}(u)$ then solves the Floer equation on $D$ if and only if $f_{\rho}(u)=0$.  We let $\Psi\colon f^{-1}(0)\to \FF_0$ be the map which equals $\Psi_{\rho}$ on $f_{\rho}^{-1}(0)$.

By definition of the pre-gluing map, it follows that as $\rho\to\infty$, 
\[
\inf_{u\in f_{\rho}^{-1}(0)}\left\{ \sup_{D}|d\Psi(u)|\right\}\to\infty.
\]
In particular for any $M>0$, there is $\rho_0>0$ such that $\Psi(u)\in U_M$ for all $u\in N\NN_{\rho}$ and all $\rho>\rho_0$.

\begin{Lemma}\label{Lem:gluemb}
For $\rho$ sufficiently large, the map $\Psi$ is a $C^{1}$ embedding.
\end{Lemma}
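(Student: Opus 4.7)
The plan is to verify the three conditions of a $C^{1}$-embedding in turn: $C^{1}$-smoothness of $\Psi$, injectivity of $\Psi$, injectivity of $d\Psi$, and that $\Psi$ is a homeomorphism onto its image. The $C^{1}$-smoothness of $\Psi$ is essentially built in: $\Psi$ is the composition of the inclusion $f^{-1}(0) \hookrightarrow N\NN$ (equipped with its $C^{1}$-structure from Corollary \ref{Cor:C1Structure} via $\Phi$), the exponential map $\exp_{w_\rho}(\cdot)\colon N\NN \to \widehat{\XX}$ from Section \ref{ssec:glueddomains}, and the $\rho$-dependent reparametrization $\Delta_\rho \to H_\rho \cong D$ of Remark \ref{rem:H_rho}. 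Each of these steps varies $C^{1}$-smoothly in $\rho$, the last one by the same kind of argument as in Lemma \ref{Lem:C1source}.

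For the injectivity of $\Psi$, the plan is to recover $(u^-,u^+,\rho)$ geometrically from $\Psi(u)\in\FF_0$. Given an element in the image, Corollary \ref{Cor:limit2} associates to it an approximating pair in $\FF_1 \times \MM$. To recover $\rho$ itself, observe that in the $H_\rho$-model the marked points $\zeta_{\pm}$ correspond, after the scaling by $\delta r^{-1} e^{-2\pi\rho}$ of Remark \ref{rem:H_rho}, to points on $\partial D$ at distance $O(e^{-2\pi\rho})$ from the origin. Hence the intersection of $\Psi(u)(\partial D)$ with $S_\epsilon(a^\pm)$ closest to the origin (where the bubble concentrates) determines $\rho$ up to $O(1)$, and the smooth $\rho$-dependence from the Newton iteration (combined with the quadratic estimate \eqref{Eq:iterest}) upgrades this to injectivity for $\rho$ large.

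For the injectivity of $d\Psi$, decompose $T_{(u^-,u^+,\rho)}\NN = T_{u^-}\FF_1 \oplus T_{u^+}\MM \oplus T_\rho[\rho_0,\infty)$. Tangent vectors from the first two factors correspond, via the pre-gluing $\Pre$ and the Newton iteration $\Phi$, to vector fields on $\Delta_\rho$ essentially supported on the halves $\Delta_\rho^\pm$, by the cut-off construction in Section \ref{Ssec:Pre-gluing} and the uniform estimate of Lemma \ref{Lem:partialglu1}. After reparametrization to $H_\rho$ they remain essentially supported on disjoint regions of $D$, and are individually nonzero because $\FF_1$ and $\MM$ are transversely cut out (Lemmas \ref{Lem:actualdimensions} and \ref{Lem:holdisks1puncture}). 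The $\rho$-derivative corresponds infinitesimally to stretching the neck $[-\rho,\rho]\times[0,1]\subset\Delta_\rho$ by one unit, and under the reparametrization this is (to leading order) the generator of radial dilation of the small $D^+$-disk in $H_\rho$; this vector field is linearly independent of the other two families modulo errors which are exponentially small in $\rho$, by the Fourier-expansion analysis of Remark \ref{rem:Fourierexpansion} and Lemma \ref{Lem:almostholomorphic}.

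The main obstacle is the quantitative linear independence in the $\rho$-direction, which requires uniform control on both the $\rho$-derivative of the Newton iterate (bounded via \eqref{Eq:iterest}) and the $\rho$-derivative of the reparametrization map. Both are handled by the same pattern that powered Lemmas \ref{Lem:partialglu1} and \ref{Lem:partialglu2}: cut off in the neck, exploit the exponential decay of the asymptotic Fourier expansion, and absorb the remainders into an error of size $O(e^{-\pi\rho/2})$ which is negligible once $\rho$ is large enough. Finally, that $\Psi$ is a homeomorphism onto its image follows from properness: any sequence $\Psi(u_n)$ converging in $\FF_0$ lies in $U_M \setminus U_{M'}$ for some $M'>M$, which by Corollary \ref{Cor:limit2} forces the parameters $\rho_n$ to lie in a compact subinterval of $[\rho_0,\infty)$ and the underlying pairs in a compact subset of $\FF_1\times\MM$, so a subsequence converges in the source.
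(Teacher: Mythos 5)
Your outline of the soft ingredients ($C^1$-smoothness of $\Psi$ from the construction, compactness/properness at the end) is consistent with the paper, but the heart of the lemma is injectivity, uniformly over the noncompact range $\rho\in[\rho_0,\infty)$, and there your argument has a genuine gap. Recovering $\rho$ ``up to $O(1)$'' from the intersections of the boundary with $S_{\epsilon}(a^{\pm})$ cannot separate two glued solutions whose parameters $(u^-,u^+,\rho)$ and $((u^-)',(u^+)',\rho')$ are close to each other, which is exactly the case one must rule out; and invoking ``smooth $\rho$-dependence'' to upgrade this points in the wrong direction, since smooth dependence by itself would only say that nearby values of $\rho$ give nearby maps, which hinders rather than helps injectivity. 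Likewise, pointwise injectivity of $d\Psi$ together with errors of size $O(e^{-\pi\rho/2})$ only yields local injectivity on neighborhoods whose size is a priori uncontrolled as $\rho\to\infty$; you never produce the uniform scale at which local injectivity holds, which is what is needed to combine with a large-scale separation statement.

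The missing quantitative idea, which is how the paper proves the lemma, is that the $\rho$-derivative of $\Psi_{\rho}(u)$ is not merely transverse to the other directions but exponentially \emph{large}: since all relevant norms are comparable to the $C^0$-norm on Floer-holomorphic maps, one may work with $C^0$-distances, and the estimate \eqref{Eq:iterest} together with the rescaling of the $D^+$-part of $H_{\rho}$ by $\delta r^{-1}e^{-2\pi\rho}$ (Remark \ref{rem:H_rho}) gives
\[
\left|\frac{\pa(\Psi_{\rho}(u))}{\pa\rho}\right|_{C^{0}}\ \ge\ C e^{\pi\rho}.
\]
With this in hand the argument is a short triangle inequality: if distinct pairs $(u,\rho)\ne(u',\rho')$ had $\mathrm{d}(\Psi_{\rho}(u),\Psi_{\rho'}(u'))=\Ordo(\mathrm{d}(u,u'))$, then $\Psi_{\rho}(u)$ and $\Psi_{\rho'}(u)$ would also be at distance $\Ordo(\mathrm{d}(u,u'))$, contradicting the exponential lower bound once $\rho,\rho'$ are large; this gives injectivity at a uniform scale $\epsilon_0$, and for parameters at distance greater than $\epsilon_0$ injectivity follows because $\Psi$ is $C^0$-close to the pregluing map, which separates such pairs by at least $\tfrac12\epsilon_0$. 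Without this lower bound (or a substitute playing the same role), your scheme of ``approximate recovery plus immersion plus properness'' does not close up into a proof.
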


\begin{proof}
The distance functions in $\FF_{0}$ and $\NN$ are induced from configuration spaces which are Banach manifolds with norms that control the $C^{0}$-norm. Since all norms are equivalent to the $C^{0}$-norm for (Floer) holomorphic maps, we may think of all distances $\mathrm{d}(\cdot,\cdot)$ in the calculations below as $C^{0}$-norms. Equation \eqref{Eq:iterest} and the rescaling of the part of $H_{\rho}$ that corresponds to $D^{+}$ imply that 
\begin{equation} \label{Eq:DerivativeBound}
\left|\frac{\pa(\Psi_{\rho}(u))}{\pa\rho}\right|_{C^{0}}\ge C e^{\pi\rho}
\end{equation}
for some constant $C>0$. 

Assume now that that there is a sequence of pairs $(u,\rho)\ne (u',\rho')$ in $\NN=(\FF_{1}\times\MM)\times[\rho_0,\infty)$ such that 
\[
\mathrm{d}(\Psi_{\rho}(u),\Psi_{\rho'}(u'))=\Ordo(\mathrm{d}(u,u')).
\]
Then it would follow that
\begin{equation} \label{Eq:InjectiveGlue}
\mathrm{d}(\Psi_{\rho}(u),\Psi_{\rho'}(u))\le \mathrm{d}(\Psi_{\rho'}(u),\Psi_{\rho'}(u'))+\Ordo(\mathrm{d}(u,u'))=\Ordo(\mathrm{d}(u,u')).
\end{equation}
However, taking $\epsilon_0>0$ small and $\mathrm{d}(u',u)\le \epsilon_0$, \eqref{Eq:InjectiveGlue} contradicts the derivative bound \eqref{Eq:DerivativeBound} once $\rho, \rho'$  are sufficiently large. We conclude that for small enough $\epsilon_0$,  the map is a $C^{1}$ embedding in an $\epsilon_0$-neighborhood of any point.

It follows that the map is also a global embedding: since $\Psi$ approaches the pregluing map as $\rho\to \infty$, there is $\rho_0>0$ so that for $\rho, \rho' > \rho_0$, if  $\mathrm{d}(u,u')>\epsilon_0$ then $\mathrm{d}(\Psi_{\rho}(u),\Psi_{\rho'}(u'))\ge \frac12\epsilon_0$.
\end{proof}

We are now ready to prove our main gluing result.

\begin{Theorem}\label{Thm:gluing}
For $\rho_0$ sufficiently large, the map $\Psi\colon \NN \rightarrow \FF_0$ is a smooth embedding onto a neighborhood of the Gromov-Floer boundary.
\end{Theorem}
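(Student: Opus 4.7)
The main remaining content of Theorem \ref{Thm:gluing} is surjectivity of $\Psi$ onto a neighborhood of the Gromov-Floer boundary, since Lemma \ref{Lem:gluemb} already establishes the $C^1$ embedding property, and the smooth structure on the domain of $\Psi$ is furnished by Corollary \ref{Cor:C1Structure}. The plan is therefore to verify that for any sufficiently large $M$ (equivalently, for $\rho_0$ sufficiently large), every $u \in U_M \subset \FF_0$ lies in the image of $\Psi$; since the open sets $U_M$ exhaust a neighborhood of the Gromov-Floer boundary as $M \to \infty$, this will suffice.

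First, given $u \in U_M$, I would invoke Corollary \ref{Cor:limit2}: for $M$ large it produces a pair $(u_1,u_2) \in \FF_1 \times \MM$ and a reparametrization of the domain of $u$ (identifying $D$ with $H_\rho$ and hence with $\Delta_\rho$ for some $\rho = \rho(u)$ large, determined by the $S_\epsilon(a^\pm)$-intersection points) such that $u$, viewed as a map on $\Delta_\rho$, is arbitrarily $C^1$-close on compacts to the pre-glued configuration $w_\rho = u_1 \#_\rho u_2$. One must also check that the rescaling converts the derivative blow-up condition $\sup|du| > M$ into a large value of the gluing parameter $\rho$; this follows from the exponential decay of the Fourier expansions in Remark \ref{rem:Fourierexpansion} combined with the scaling factor $\delta e^{-2\pi\rho}$ in the identification of $H_\rho$ with $D$.

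Next, I would transfer the closeness estimate into the normal bundle picture. Writing the reparametrized $u$ as $\exp_{w_\rho}(\xi)$ for $\xi$ a section of $T_{w_\rho}(\widehat{\XX}_\rho \times \partial D \times [0,\infty))$, the closeness of $u$ to $w_\rho$ translates into $\|\xi\|$ being small, and after $L^2$-projection we may decompose $\xi = \xi_{T} + \xi_{N}$ with $\xi_T$ tangent to $d\Pre(T\NN_\rho)$ and $\xi_N \in N_{w_\rho}\NN_\rho$. Using the implicit function theorem, adjusting the reparametrization and the choice of $(u_1,u_2,\rho)$ within $\NN$ by an amount comparable to $\|\xi_T\|$, I can arrange $\xi_T = 0$, so $u$ corresponds to a normal perturbation $\xi_N$ with $\|\xi_N\|$ smaller than the $\epsilon$-disk in which Lemma \ref{Lem:FloerPicard} applies.

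Finally, the uniqueness clause of the Picard lemma closes the argument: since $f_\rho(\xi_N) = 0$ and $\xi_N$ lies in the $\epsilon$-disk $D(0_X;\epsilon) \cap N_{w_\rho}\NN_\rho$, it must equal the unique zero of $f_\rho$ in that disk produced by the Newton iteration starting from $0$. That zero is by construction $\Phi(u_1,u_2,\rho)$, so $u = \Psi(u_1,u_2,\rho)$, establishing surjectivity onto $U_M$. The main obstacle in this plan is the second step, where one must verify quantitatively that the closeness in Corollary \ref{Cor:limit2} is strong enough, after the gauge adjustment, to fit inside the disk where Picard's lemma applies; making this compatible with the uniform estimates for $Q_{(m,t)}$ from Lemma \ref{Lem:partialglu1} and the quadratic bound from Lemma \ref{Lem:partialglu2} is the crux of the surjectivity argument.
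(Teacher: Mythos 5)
Your plan is structurally the right one and matches the paper's strategy: reduce to surjectivity onto some $U_M$, extract $(u_1,u_2)\in\FF_1\times\MM$ from a sequence via Corollary~\ref{Cor:limit2}, reparametrize $u$ onto $\Delta_\rho$ and write $u = \exp_{w_\rho}(\xi)$, then invoke the uniqueness clause of Lemma~\ref{Lem:FloerPicard} to conclude that $\xi$ coincides with the Newton-iteration zero. However, the step you flag as ``the crux'' --- verifying that the closeness from Corollary~\ref{Cor:limit2} is actually strong enough for the Picard lemma to apply --- is precisely the content of the paper's argument, and it is a genuine gap in your proposal rather than a quantitative detail. Corollary~\ref{Cor:limit2} only gives $C^1$-closeness on the two fixed ends $\Delta_{\rho}^\pm - \bigl([-\rho,\rho]\times[0,1]\bigr)$, and you need smallness of $\xi$ in the $\sblv^2$-norm over all of $\Delta_\rho$, including the neck $[-\rho+\rho_0,\rho-\rho_0]\times[0,1]$ whose length grows with $\rho$. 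Nothing in your sketch controls $\xi$ there, and a tangent/normal decomposition plus the implicit function theorem cannot by themselves supply that control.

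The paper closes this gap by exploiting the fact that on the middle strip the Hamiltonian term vanishes and the almost complex structure is standard, while the image eventually lies in the region where the Lagrangian is $\R^n\cup i\R^n$; hence the restriction of $u_j$ to the strip admits an explicit Fourier expansion $u_j(z) = \sum_{k\in\Z} c_k e^{\pi(k-\frac12)z}$. The $C^2$-control at the two ends $\{\pm\rho'\}\times[0,1]$ (which Corollary~\ref{Cor:limit2} does provide) then dominates the full Sobolev norm over the interior of the strip, because the exponential modes are monotone along the strip, as the paper's explicit integral identities show. This is the a priori estimate that converts ``$C^k$-close on compacts'' into ``$\sblv^2$-close on $\Delta_\rho$,'' and it is exactly what you need before the uniqueness clause of the Picard lemma becomes available. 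Without it, the final step of your argument does not go through.
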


\begin{proof}
Take $\rho_0$ sufficiently large that the conclusion of Lemma \ref{Lem:gluemb} holds. 
It only then remains to show that (perhaps for some larger $\rho_0$) the map $\Psi$ is surjective onto a neighborhood of the Gromov-Floer boundary of $\FF_0$. 

To see this we consider a Gromov-Floer convergent sequence $u_{j}$ of solutions in $\FF_{0}$ with non-trivial bubbling. Such a sequence converges uniformly on compact subsets, and by Corollary \ref{Cor:limit2} we recover $u^{-}\in\FF_{1}$ and $u^{+}\in\MM$ as maps corresponding to these uniform limits. We need then to show that $u_j$, when re-parameterized and considered as a map on $H_{\rho}\approx\Delta_{\rho}$, eventually lies in the image of the exponential map of a small neighborhood of $u^{-}\#_{\rho} u^{+}$. We point out that the inverse of the re-parameterization map determines the relevant value of $\rho$ from the distance $d$ of the intersection points with $S_{\epsilon}(a^{\pm})$ corresponding to the marked points in $D^{+}$. If, as in Remark \ref{rem:H_rho}, we  think of $D^{+}$ as the upper half plane with marked points at $\pm 1$, then $\rho=(2\pi)^{-1}\log(d/2\delta)$.

We claim that $u_j$ eventually lies in the image under the exponential map of a small neighborhood of $\NN \subset N\NN$ where the Newton iteration map is defined. Corollary \ref{Cor:limit2} implies for any fixed $\rho_0$ and for $k\leq 2$, the maps $u_j$ $C^{k}$-converge  on the two ends of the strip region $[-\rho+\rho_0,\rho-\rho_0]\times[0,1]$ to $u^{-}$ and $u^{+}$, respectively. On the remaining growing strip we have a holomorphic map which converges to a constant (since there cannot be further bubbling). 
In particular, the map of the strip eventually lies completely inside the region where the Lagrangian immersion agrees with $\R^{n}\cup i\R^{n}$. Since the almost complex structure is standard here and since the Hamiltonian term vanishes in the region in $H_{\rho}$ corresponding to the strip, we can write down the
solutions explicitly:
\[
u_j(z)=\sum_{k\in\Z} c_ke^{\pi\left(k-\tfrac12\right)z},
\]
where $c_k\in\R^{n}$. Writing $u_j=x_j+iy_j$, with $x_j,y_j\in\R^{n}$, and $z=\tau+it\in[-\rho',\rho']\times[0,1]$, $\rho'=\rho-\rho_0$, the $C^{0}$ convergence together with the explicit form of the solutions $u^{\pm}$ near the punctures, see Remark \ref{rem:Fourierexpansion}, then implies that
\[
\int_{0}^{1}\left\langle y_j(\pm\rho',t),y_j(\pm\rho',t)\right\rangle \,dt=\Ordo(e^{-\pi\rho_0}).
\]
On the other hand, 
\begin{align*}
&\int_{0}^{1}\left\langle y_j(\rho',t),y_j(\rho',t)\right\rangle \,dt+
\int_{0}^{1}\left\langle y_j(-\rho',t),y_j(-\rho',t)\right\rangle \,dt=\\
&\sum_{k\in\Z} |c_k|^{2}(e^{(2k-1)\pi\rho'}+e^{-(2k-1)\pi\rho'}).
\end{align*}
and
\begin{align*}
&\int_{-\rho'}^{\rho'}\int_{0}^{1}(\left\langle x_j(\rho,t),x_j(\rho,t)\right\rangle+\left\langle y_j(\rho,t),y_j(\rho,t)\right\rangle) \,dtd\rho=\\
&\sum_{k\in\Z} \frac{|c_k|^{2}}{|2k-1|\pi}\left|e^{(2k-1)\pi\rho'}-e^{-(2k-1)\pi\rho'}\right|.
\end{align*}

We thus find that the $C^{0}$ norm at the ends of the strip controls the $L^{2}$-norm. Repeating this argument for two derivatives of $u_j$ we see that the relevant Sobolev norm is controlled by the $C^{2}$-convergence at the ends of the strip. We conclude that the re-parameterized version of $u_j$ eventually lies in the image of a small neighborhood of $\NN$ in $N\NN$. Thus, the gluing map is a surjective embedding onto a neighborhood of the Gromov Floer boundary of $\FF_0$. 
\end{proof}

As noted in Section \ref{Ssec:Overview}, the proof of Theorem \ref{Thm:C1boundary} is now completed by gluing together (the $C^1$-smooth pieces) $\FF_0 - \overline{U_M}$ and $\FF_1 \times \MM \times [\rho_0,\rho_1]$, for suitably large $M, \rho_1$, via the diffeomorphism $\Psi$ constructed above. 

\bibliographystyle{alpha}

\begin{thebibliography}{10}


\bibitem{Abouzaid:htpy}
M.~Abouzaid.
\emph{Nearby Lagrangians with vanishing Maslov class are homotopy equivalent.}
Invent. Math. {\bf 189} (2012) 251--313.

\bibitem{BEE}
F.~Bourgeois, T.~Ekholm and Y.~Eliashberg.
\emph{Effect of Legendrian surgery.}  
Geom. Topol. {\bf 16} (2012) 301--389.


\bibitem{Damian}
M.~Damian.
\emph{Floer homology on the universal cover, a proof of Audin's conjecture, and other constraints on Lagrangian submanifolds.}
Comment. Math. Helv. {\bf 87} (2012) 433--462.

\bibitem{Ekholmtrees}
T.~Ekholm.
\emph{Morse flow trees and Legendrian contact  homology in $1$-jet spaces.}
Geom. Topol. {\bf 11} (2007) 1083--1224.

\bibitem{EESa}
T.~Ekholm, J.~Etnyre and J. Sabloff,
\emph{A duality exact sequence for Legendrian contact homology},
Duke Math. J. {\bf 150} (2009) 1--75.

\bibitem{EESori}
T.~Ekholm, J.~Etnyre and M.~Sullivan, 
\emph{Orientations in Legendrian contact homology and exact Lagrangian immersions}, 
Internat. J. Math. {\bf 16} (2005) 453--532.

\bibitem{EES1}
T.~Ekholm, J.~Etnyre and M.~Sullivan.
\emph{The contact homology of Legendrian submanifolds in $\R^{2n+1}$.} 
J.~Differential Geom. {\bf 71} (2005) 177--305. 

\bibitem{EES2}
T.~Ekholm, J.~Etnyre and M.~Sullivan.
\emph{Non-isotopic Legendrian submanifolds in $\R^{2n+1}$.} 
J. Differential Geom. {\bf 71} (2005) 85--128.

\bibitem{EES}
T.~Ekholm, J.~Etnyre and M.~Sullivan.
\emph{Legendrian contact homology in $P\times \bR$.}
Trans. Amer. Math. Soc. \textbf{359} (2007) 3301--3335.



\bibitem{EkholmSmith}
T.~Ekholm and I.~Smith,
\emph{Exact Lagrangian immersions with a single double point},
Preprint, arXiv:1111.5932

\bibitem{EEM}
T.~Ekholm, Y.~Eliashberg, E.~Murphy and I.~Smith.  
\emph{Constructing exact Lagrangian immersions with few double points.}
Preprint, arXiv:1303.0588. 

\bibitem{EM}
Y.~Eliashberg and E.~Murphy.
\emph{Lagrangian caps.}
Preprint, arXiv:1303.0586.


\bibitem{Eriksson_Ostman}
A.~Eriksson-{\"O}stman.
\emph{Manuscript in preparation.}



\bibitem{Floer:lagrangian}
A.~Floer.
\emph{Morse theory for Lagrangian intersections.}
J. Differential Geom. {\bf 28} (1988) 513--547.

\bibitem{Floer:mem}
A.~Floer.
\emph{Monopoles on asymptotically flat manifolds.} 
The Floer memorial volume, Progr.~Math., {\bf 133} (1995) 3--41, Birkh\"auser, Basel.

\bibitem{FO3}
K.~Fukaya, Y.-G.~Oh, K.~Ohta and K.~Ono.
\emph{Lagrangian intersection Floer theory -- Anomaly and obstruction, I \& II.}
AMS Studies in Advanced Mathematics Vol. 46, International Press, (2009).


\bibitem{Gromov}
M.~Gromov.
\emph{Pseudoholomorphic curves in symplectic manifolds.}
Invent. Math. \textbf{82} (1985) 307--347.


\bibitem{Hirsch}
M.~Hirsch,
\emph{Differential topology.} 
Corrected reprint of the 1976 original. 
Graduate Texts in Mathematics, 33. Springer-Verlag, New York, 1994.


\bibitem{HWZ}
H.~Hofer, K.~Wysocki and E.~Zehnder, 
\emph{A general Fredholm theory. III. Fredholm functors and polyfolds.}
Geom. Topol. \textbf{13} (2009) 2279--2387.

\bibitem{HudsonZeeman}
J.~F.~Hudson and E.~C.~Zeeman.
\emph{On combinatorial isotopy.}
Inst. Hautes Etudes Sci. Publ. Math. \textbf{19} (1964) 69--94.


\bibitem{KM}
M.~Kervaire and J.~Milnor.
\emph{Groups of homotopy spheres, I.}
Ann. of Math. \textbf{77} (1967) 504--537.

\bibitem{Kwon-Oh}
D.~Kwon and Y.-G.~Oh.
\emph{Structure of the image of pseudo-holomorphic discs with totally real boundary condition.}
Comm. Anal. Geom. \textbf{1} (2000) 31--82.

\bibitem{Lazzarini}
L.~Lazzarini.
\emph{Existence of a somewhere injective pseudo-holomorphic disc.}
Geom. Funct. Anal. \textbf{10} (2000) 829--862.

\bibitem{McDSal2}
D.~McDuff, D.~Salamon, 
\emph{J-holomorphic curves and symplectic topology.} Second edition. American Mathematical Society Colloquium Publications, 52. American Mathematical Society, Providence, RI, 2012

\bibitem{Murphy}
E.~Murphy.
\emph{Loose Legendrian embeddings in high-dimensional contact manifolds.}
Preprint, available at arXiv:1201.2245.

\bibitem{Oh}
Y.-G.~Oh.
\emph{Gromov-Floer theory and disjunction energy of compact Lagrangian submanifolds.}
Math. Res. Lett. \textbf{6} (1997) 895--905.


\bibitem{Oh:bubble}
Y.-G.~Oh.
\emph{Floer cohomology of Lagrangian intersections and pseudo-holomorphic disks, I.}
Comm. Pure Appl. Math. \textbf{46} (1993) 949--993.



\bibitem{PressleySegal}
A.~Pressley and G.~Segal.
\emph{Loop groups.}
(1986) Oxford University Press.

\bibitem{Sauvaget}
D.~Sauvaget, 
\emph{Curiosit\'es Lagrangiennes en dimension 4}
Ann. Inst. Fourier (Grenoble) {\bf  54} (2004) 1997--2020. 





\end{thebibliography}

\end{document}